\theoremstyle{plain}                    
\newtheorem{thm}{Theorem}[section]
\newtheorem{rmk}[thm]{Remark}
\newenvironment{acknowledgment}{{\flushleft \bf Acknowledgment:}}{}
\numberwithin{equation}{section}
\numberwithin{figure}{section}
\numberwithin{table}{section}
\newcommand\eref[1]{(\ref{#1})}
\newcommand*\xbar[1]{%
  \hbox{%
    \vbox{%
      \hrule height 0.5pt 
      \kern0.4ex
      \hbox{%
        \kern-0.05em
        \ensuremath{#1}%
        \kern-0.00em
      }%
    }%
  }%
}
\newcommand{\bb}[1]{{\bf{#1}}} 
\newcommand{\mc}[1]{{\mathcal{#1}}} 
\newcommand{\p}[1]{{\left( #1 \right)}}
\def\d{\partial}
\def\hf {\frac{1}{2}}
\newcommand{\kph}{{k+\frac{1}{2}}}
\newcommand{\kmh}{{k-\frac{1}{2}}}
\newcommand{\jph}{{j+\frac{1}{2}}}
\newcommand{\jmh}{{j-\frac{1}{2}}}
\newcommand{\dx}{\Delta x}
\newcommand{\dy}{\Delta y}
\title{Divergence-Free Flux Globalization Based Well-Balanced Path-Conservative Central-Upwind Schemes for Rotating Shallow Water
Magnetohydrodynamics}
\author{Alina Chertock\thanks{Department of Mathematics and Center for Research in Scientific Computing, North Carolina State University,
Raleigh, NC 27695, USA; {\tt chertock@math.ncsu.edu}}, Alexander Kurganov\thanks{Department of Mathematics, Shenzhen International Center
for Mathematics, and Guangdong Provincial Key Laboratory of Computational Science and Material Design, Southern University of Science and
Technology, Shenzhen, 518055, China; {\tt alexander@sustech.edu.cn}}, Michael Redle\thanks{Applied and Computational Mathematics, RWTH
Aachen University, 52062, Aachen, Germany, and Department of Mathematics, North Carolina State University, Raleigh, NC 27695, USA;
{\tt redle@acom.rwth-aachen.de}}, and Vladimir Zeitlin\thanks{Laboratoire de M\'et\'eorologie Dynamique, Sorbonne Universit\'e (SU), Ecole
Normale Sup\'erieure (ENS), CNRS, Paris, 75231, France, and Shenzhen International Center for Mathematics, Southern University of Science
and Technology, Shenzhen, 518055, China; {\tt zeitlin@lmd.ens.fr}}}
\begin{document}
\date{}
\maketitle

\begin{abstract}
We develop a new second-order flux globalization based path-conservative central-upwind (PCCU) scheme for rotating shallow water
magnetohydrodynamic equations. The new scheme is designed not only to maintain the divergence-free constraint of the magnetic field at the
discrete level but also to satisfy the well-balanced (WB) property by exactly preserving some physically relevant steady states of the
underlying system. The locally divergence-free constraint of the magnetic field is enforced by following the method recently introduced in
[A. Chertock, A. Kurganov, M. Redle, and K. Wu, ArXiv preprint (2022), arXiv:2212.02682]: we consider a Godunov-Powell modified version of
the studied system, introduce additional equations by spatially differentiating the magnetic field equations, and modify the reconstruction
procedures for magnetic field variables. The WB property is ensured by implementing a flux globalization approach within the PCCU scheme, leading to a method capable of preserving both still- and moving-water equilibria exactly. In addition to provably achieving both the WB and
divergence-free properties, the new method is implemented on an unstaggered grid and does not require any (approximate) Riemann problem
solvers. The performance of the proposed method is demonstrated in several numerical experiments that confirm the lack of spurious
oscillations, robustness, and high resolution of the obtained results.
\end{abstract}

\smallskip
\noindent
{\bf Keywords:} Rotating shallow water magnetohydrodynamics, divergence-free constraints, nonconservative hyperbolic systems of nonlinear
PDEs, path-conservative central-upwind scheme, flux globalization based well-balanced scheme.

\medskip
\noindent
{\bf AMS subject classification:} 65M08, 76W05, 76M12, 86-08, 35L65.

\section{Introduction}\label{sec1}
Rotating shallow water magnetohydrodynamics (MHD) equations, also known as magnetic rotating shallow water (MRSW) equations, were introduced
in the pioneering paper \cite{Gilman2000Magnetohydrodynamic} as a model of the solar tachocline. Written in conservative form, the MRSW
equations on the tangent plane to a rotating star/planet read as
\begin{equation}
\begin{aligned}
&h_t+\nabla\!\cdot\!(h\bm u)=0,\\
&(h\bm u)_t+\nabla\!\cdot\!(h\bm u\otimes\bm u-h\bm b\otimes\bm b)+\nabla\left(\frac{g}{2}h^2\right)=-gh\nabla Z-fh\bm u^\perp,\\
&(h\bm b)_t+\nabla\!\cdot\!(h\bm b\otimes\bm u-h\bm u\otimes\bm b)=0,\\
&\nabla\!\cdot\!(h\bm b)=0,
\end{aligned}
\label{1.1}
\end{equation}
where $x$ and $y$ are the spatial variables in the plane, $t$ denotes time, $h$ represents the fluid thickness, $\bm u=(u,v)^\top $ is the
horizontal velocity, $\bm b=(a,b)^\top$ denotes the horizontal magnetic field in units of velocity, $\otimes$ denotes the tensor product,
$g$ is the constant gravitational acceleration, $Z$ is the time-independent bottom topography, $\bm u^\perp=(-v,u)^\top$ denotes the vector
obtained by rotating velocity $\bm u$ by the angle $\pi/2$, and $f=f(y)$ is the Coriolis parameter. In what follows, the $x$- and
$y$-components of vector fields will be often called zonal and meridional, respectively, according to the standard astro- and geophysical
terminology. If the effects of the curvature are neglected, we get the simplest $f$-plane approximation, where $f(y)\equiv f_c$ is constant.
If the curvature is taken into account to the first order, we obtain the beta-plane approximation, with $f(y)=f_c+\beta y$,
$\beta={\rm Const}$. The model \eref{1.1} represents the conservation of mass, momentum, and divergence-free magnetic flux in the presence
of rotation and bottom topography, the latter two acting as specific sources. Topography was absent in the original formulation
\cite{Gilman2000Magnetohydrodynamic} but can be important, for instance, in geophysical applications. Note that in the absence of a
magnetic field, the system becomes the standard rotating shallow water (RSW) model, abundantly studied in the physical and mathematical
literature.

The MRSW model can be systematically derived from the full MHD equations for a magnetic rotating fluid in the Boussinesq and hydrostatic
approximations, the latter being valid for large-scale motions, by vertical averaging \cite{Zeitlin2013Remarks}. A descendant
MQG model for motions close to the magneto-geostrophic equilibrium, that is, an equilibrium between the pressure
(magnetic plus hydrodynamic) and the Coriolis forces, follows from the MRSW equations by filtering the fast waves
\cite{Zeitlin2013Remarks,zeitlin2015geostrophic}. At present, the MRSW and MQG models, as well as their variants, are used both in
astrophysical (see, e.g., \cite{Petrosyan1}) and geophysical (see, e.g., \cite{Raphaldini1}) applications. The MRSW model allows us to
describe the essential dynamical entities of the full MHD, such as (magnetized) vortices, (magneto-)inertia-gravity, Alfv\'en waves, and
their interactions, and also turbulent regimes \cite{Petrosyan1,TobiasDiamondHughes}. It is well known (see, e.g., \cite{LandauLifshitz})
that the MHD equations admit shocks of various geometries and contact discontinuities, and this property is inherited by the MRSW model.
Besides that, the MRSW equations admit Rossby waves, which arise in the configurations close to magneto-geostrophic equilibria in the
presence of differential rotation and are of particular interest for applications; see, e.g., the review papers
\cite{Petrosyan2,Zaqarashvili1}. In addition, exact steady-moving balanced vortex dipole solutions of the MQG equations, with a magnetic
anomaly, either trapped inside or expelled from their cores, were recently found \cite{Lahaye2022Coherent}, and a question of their
counterparts in the full MHD equations arise. 

The present paper aims to develop a numerical scheme for \eref{1.1}, which would provide a reliable tool for investigating various
aforementioned nonlinear dynamical processes and beyond. However, the construction of such a scheme has to rise to two main challenges. The
first challenge is to ensure a well-balanced (WB) property, that is, to develop a scheme capable of exactly preserving (several) physically
relevant steady states that correspond to an exact balance of flux-divergence and source terms in \eref{1.1}. Formulating a method that
ensures the WB property is, however, nontrivial. For example, a straightforward, shock-capturing discretization can often result in spurious
oscillations or spurious numerical waves that could be orders of magnitude larger than the small perturbation (of the steady state) to be
captured. While using a very fine mesh may be able to fix these issues, such an approach would drastically increase computational time and
may thus be impractical. For the standard RSW system, that is, \eref{1.1} without magnetic field, several WB schemes were developed; see,
e.g., \cite{ADDGNP,BouchutZ,Cao2022Flux,CLP,Chertock2018Well,DM22,DongLi,LNK} and references therein.

In this paper, we develop a new WB scheme for the MRSW system \eref{1.1} using a flux globalization approach introduced in
\cite{Chertock2018_2by2,CDH2009,DDMA11,GC2001,MGD11} and recently successfully applied to a variety of systems of balance laws; see, e.g.,
\cite{Cao2022Flux,Cao2023Flux,Cheng2019new,Chertock2018Well,Kurganov2023Well,Kurganov2020Well}. In this approach, both the source and
nonconservative product terms are incorporated into the flux leading to an equivalent quasi-conservative system with a global flux. The
resulting system is then integrated using a Riemann-problem-solver-free central-upwind (CU) scheme. The CU schemes were introduced in
\cite{Kurganov2007Reduction,kurganov2001semidiscrete,Kurganov2017second,Kurganov2000New,Kurganov2002solution} as a robust ``black-box''
solver for general multidimensional systems of conservation laws and then were almost directly applied to the quasi-conservative systems
with global fluxes in \cite{Cao2022Flux,Cao2023Flux,Cheng2019new,Chertock2018Well,Chertock2018_2by2,Kurganov2023Well,Kurganov2020Well}.
The CU schemes were extended to nonconservative hyperbolic systems in \cite{Castro_Diaz2019Path}, where path-conservative CU (PCCU) schemes
were introduced. The path-conservative technique has been recently incorporated into the flux globalization framework, and WB flux
globalization based PCCU schemes have been introduced in \cite{Cao2022Flux,Cao2023Flux,Kurganov2023Well}. These WB schemes are capable of
exactly preserving a wide variety of steady states, including some discontinuous ones.

The second main challenge is the maintenance of the zero-divergence constraint for the magnetic flux, $\nabla\!\cdot\!(h\bm b)=0$, at the
discrete level. It is well-known that the enforcement of this constraint helps to prevent the appearance of nonphysical structures or 
spurious oscillations in the solutions; see, e.g., \cite{Balsara1999Staggered,Brackbill1980effect,Li2005Locally,Toth2000Div}. A large
variety of methods that preserve the divergence-free constraint in the context of the full MHD equations has been proposed; we refer the
reader to, for instance,
\cite{chertock2022new,Dumbser2019divergence,Fu2018Globally,Helzel2013high,Londrillo2004divergence,Mishra2012constraint,Xu2016divergence} and
references therein. For the shallow water MHD, early efforts in discrete divergence preservation were introduced through constrained
transport methods in \cite{Rossmanith2002wave,Sterck2001multi}. The main idea of constrained transport methods was to stagger the magnetic
field in such a way that exactly preserves the divergence-free constraint. The constrained transport methods have since been further
developed to be robust on unstaggered grids (see, e.g., \cite{Touma2010unstaggered}). Still, they are typically based on exact or
approximate Riemann problem solvers. Another commonly used approach, known as the divergence cleaning \cite{Brackbill1980effect}, uses a
Hodge decomposition to project the magnetic field into a divergence-free subspace and then to advect the divergence errors with the flow in
such a way that does not cause accumulation. The divergence cleaning methods formulated for the shallow water MHD include, among others,
Roe-type methods \cite{Kemm2016Roe}, conservation element/solution element (CE/SE) methods \cite{Ahmed2019higher,Qamar2006application}, and
a kinetic flux-vector splitting method \cite{Qamar2010kinetic}. These divergence cleaning methods, however, do not ensure an identically
zero divergence, even though the divergence errors are controlled. A divergence-free finite volume evolution Galerkin method was introduced
in \cite{Kroger2005evolution}, and divergence-free entropy stable methods were proposed in \cite{Duan2021High,Winters2016entropy}. In the
latter works, the divergence constraint is enforced by including and discretizing an additional source term, known as a Janhunen source.

Several numerical methods, which are both WB and divergence-free, are available in the literature. In \cite{Bouchut2017multi}, a WB
divergence-free method for one-dimensional (1-D) shallow-water MHD equations was constructed. In \cite{Zia2014numerical}, a WB method was 
proposed, which employs a divergence cleaning approach to reduce but not completely diminish the discrete divergence. In addition, a
high-order entropy stable finite-difference divergence-free method proposed in \cite{Duan2021High} is WB in the sense that it is capable of
exactly preserving ``lake-at-rest'' steady states in both one and two dimensions.

In this paper, we construct a divergence-free flux globalization based WB PCCU scheme for MRSW system \eref{1.1}. Thanks to a flux
globalization technique from \cite{Kurganov2023Well}, which we modify to treat the MRSW equations, our scheme preserves not only simple
``lake-at-rest'' steady states, but also some of the moving-water equilibria. The divergence-free constraint is enforced using the technique
we have recently introduced in \cite{chertock2022new}. Namely, we use (i) a Godunov-Powell modified version of the MRSW system \eref{1.1};
(ii) additional equations obtained by spatially differentiating the magnetic field equations in \eref{1.1}; and (iii) reconstruction
adjustments for magnetic field variables. The resulting method is successfully tested on a number of both 1-D and two-dimensional (2-D)
numerical examples and produces accurate and non-oscillatory results.

The paper is organized as follows. In \S\ref{sec2}, we present the WB PCCU divergence-free method for the 1-D MRSW system: we first discuss
the MRSW model modifications (\S\ref{sec21}) and then introduce a new 1-D numerical method (\S\ref{sec22}). We organize \S\ref{sec3} the
same as \S\ref{sec2}: we first present the 2-D MRSW model modifications (\S\ref{sec31}) and then introduce a new 2-D numerical method
(\S\ref{sec32}). In \S\ref{sec41} and \S\ref{sec42}, we show the 1-D and 2-D numerical experiments, respectively. We make our concluding
remarks in \S\ref{sec6}.

\section{1-D Well-Balanced PCCU Scheme for MRSW Equations}\label{sec2}
\subsection{Governing Equations}\label{sec21}
In the 1-D case, the MRSW system \eref{1.1} reduces to
\begin{equation}
\begin{aligned}
&h_t+(hv)_y=0,\\
&(hu)_t+(huv-hab)_y=fhv,\\
&(hv)_t+\left(hv^2+\frac{g}{2}h^2-hb^2\right)_y=-fhu-ghZ_y,\\
&(ha)_t+(hav-hbu)_y=0,\\
&(hb)_y=0,
\end{aligned}
\label{2.1}
\end{equation}
Note that this system is sometimes referred to as the 1.5-D MRSW system; see, for example, \cite{zeitlin2015geostrophic}; as translational
symmetry is imposed on the 2-D model in the $x$-direction. In other words, the $x$-dependence is discarded, but we keep the $x$-components
of the vector fields $\bm u$ and $\bm b$ to have a model that includes rotational effects. In this paper, we will refer to the system
\eref{2.1} as the 1-D MRSW model. In addition, notice that \eref{2.1} satisfies the divergence-free constraint $\nabla\!\cdot\!(h\bm b)=0$,
which in the 1-D case simplifies to
\begin{equation}
hb={\rm Const},
\label{2.2}
\end{equation}
as long as this condition is satisfied initially.

Before discretization, we make several adjustments to the system \eref{2.1} that will make it easier to devise a scheme that exactly
preserves both the divergence-free constraint \eref{2.2} and can exactly preserve some of the steady states of \eref{2.1}. 
We start modifications by adjusting the system \eref{2.1} to include the additional Godunov-Powell source terms (see, e.g.,
\cite{de2001hyperbolic,Dellar2002Hamiltonian,Kemm,Kroger2005evolution,Winters2016entropy}):
\begin{equation}
\begin{aligned}
&h_t+(hv)_y=0,\\
&(hu)_t+(huv-hab)_y=fhv-a\left[(hb)_y\right],\\
&(hv)_t+\left(hv^2+\frac{g}{2}h^2-hb^2\right)_y=-fhu-ghZ_y- b\left[(hb)_y\right],\\
&(ha)_t+(hav - hbu)_y=-u\left[(hb)_y\right],\\
&(hb)_t=-v\left[(hb)_y\right],
\end{aligned}
\label{2.3}
\end{equation}
which can be written in the following vector form:
\begin{equation}
\bm U_t+\bm F(\bm U)_y=Q(\bm U)\bm U_y+\bm S(\bm U),
\label{2.5}
\end{equation}
where $\bm U=(h,hu,hv,ha,hb)^\top$ and
\begin{equation}
\bm F(\bm U)=\begin{pmatrix}hv\\huv-hab\\hv^2+\frac{g}{2}h^2-hb^2\\hav-hbu\\0\end{pmatrix},~~
Q(\bm U)=\begin{pmatrix}0&0&0&0&0\\
0&0&0&0&-a\\
0&0&0&0&-b\\
0&0&0&0&-u\\
0&0&0&0&-v
\end{pmatrix},~~
\bm S(\bm U)=\begin{pmatrix}0\\fhv\\-fhu-ghZ_y\\0\\0\end{pmatrix}.
\label{2.6}
\end{equation}
Note that the additional terms $Q(\bm U)\bm U_y$ are theoretically zero due to \eref{2.2}. However, including them helps to enforce the
divergence-free condition \eref{2.2} at the discrete level.

There are several ways to discretize the Godunov-Powell modified MRSW system \eref{2.3} in a divergence-free way (see, e.g.,
\cite{fuchs2011approximate,janhunen2000positive,waagan2011robust} and related works for ideal MHD
\cite{powell1999solution,powell1995upwind,WuShu2018,WuShu2021GQL}). In this paper, we follow \cite{chertock2022new} and achieve this goal
by first introducing the new variable $B:=(hb)_y$ that satisfies the additional evolution equation
\begin{equation}
B_t+(vB)_y=0,
\label{2.4}
\end{equation}
which is obtained by taking the $y$-derivative of the $hb$ induction equation in \eref{2.3}, and then numerically solving the augmented
system \eref{2.5}--\eref{2.4}. While this system is set in a way that makes it relatively easy to design a scheme that preserves the
divergence-free constraint \eref{2.2}, its direct discretization does not necessarily lead to a WB scheme.

One can show that the system \eref{2.5}--\eref{2.4} possesses steady-state solutions that satisfy
\begin{equation}
hv\equiv{\rm Const},~~E:=\frac{v^2}{2}+g(h+Z)-\frac{b^2}{2}+\int\limits_{\widehat y}^yf(\eta)u(\eta,t)\,{\rm d}\eta\equiv{\rm Const},~~
hb\equiv{\rm Const},~~B\equiv0,
\label{2.11}
\end{equation}
with $\widehat y$ being an arbitrary number and
\begin{equation}
\begin{cases}
hvu_y-hba_y-fhv=0,\\
hva_y-hbu_y=0.
\end{cases}
\label{2.12}
\end{equation}
Furthermore, since both $hv$ and $hb$ are constants at equilibrium, \eref{2.12} is, in fact, a linear system for $u_y$ and $a_y$, which can
be easily solved to obtain
\begin{equation}
u_y=\frac{f(hv)^2}{(hv)^2-(hb)^2},\quad a_y=\frac{f(hv)(hb)}{(hv)^2-(hb)^2}.
\label{2.13}
\end{equation} 
Since $f(y)=f_c+\beta y$, one can integrate \eref{2.13} to obtain
\begin{equation}
u(y)=\frac{(hv)^2}{(hv)^2-(hb)^2}\left(f_cy+\frac{\beta}{2}y^2\right)+u_c,\quad
a(y)=\frac{(hv)(hb)}{(hv)^2-(hb)^2}\left(f_cy+\frac{\beta}{2}y^2\right)+a_c,
\label{2.16}
\end{equation}
where $u_c$ and $a_c$ are constants of integration.

In order to derive a WB scheme capable of exactly preserving steady states satisfying \eref{2.11}, \eref{2.13}, we use the flux
globalization approach. To do so, we rewrite the augmented system \eref{2.5}--\eref{2.4} in the following quasi-conservative form:
\begin{equation}
\bm W_t+\bm H(\bm W)_y=\bm0,\quad\bm W:=\begin{pmatrix}\bm U\\B\end{pmatrix},\quad
\bm H(\bm W):=\begin{pmatrix}\bm K(\bm U)\\vB\end{pmatrix},\quad\bm K(\bm U):=\bm F(\bm U)-\bm R(\bm U),
\label{2.7}
\end{equation}
with the global variables $\bm R=(0,R_2,R_3,R_4,R_5)^\top$:
\begin{equation}
\bm R(\bm U)=\int\limits_{\widehat y}^y\big[Q(\bm U(\xi,t))\bm U_\xi(\xi,t)-\bm S(\bm U(\xi,t))\big]{\rm d}\xi,
\label{2.8}
\end{equation}
whose nonzero components are
\begin{equation}
\begin{aligned}
&R_2=-\int\limits_{\widehat y}^y\big[a(\xi,t)(hb)_\xi(\xi,t)\,-f(\xi)(hv)(\xi,t)\big]{\rm d}\xi,\\
&R_3=-\int\limits_{\widehat y}^y\big[b(\xi,t)(hb)_\xi(\xi,t)+f(\xi)(hu)(\xi,t)+gh(\xi,t)Z_\xi(\xi)\big]{\rm d}\xi,\\
&R_4=-\int\limits_{\widehat y}^yu(\xi,t)(hb)_\xi(\xi,t)\,{\rm d}\xi,\quad
R_5=-\int\limits_{\widehat y}^yv(\xi,t)(hb)_\xi(\xi,t)\,{\rm d}\xi.
\end{aligned}  
\label{2.9}
\end{equation}
Note that the derivative of the global flux, $\bm K(\bm U)_y$, can be rewritten in the following matrix-vector form:
\begin{equation*}
\bm K(\bm U)_y=\bm F(\bm U)_y-\bm R(\bm U)_y=M(\bm U)\bm E_y-fhv\,\bb e_2,
\end{equation*}
where $\bb e_2:=(0,1,0,0,0)^\top$, $\bm E:=(hv,u,E,a,hb)^\top$ is the vector of equilibrium variables with $E$ defined in \eref{2.11}, and
\begin{equation}
M(\bm U):= 
\begin{pmatrix}
1&0&0&0&0\\
u&hv&0&-hb&0\\
v&0&h&0&0\\
a&-hb&0&hv&0\\
0&0&0&0&v
\end{pmatrix}.
\label{2.15}
\end{equation}
At the steady states, $\bm K(\bm U)_y=\bm0$ or, equivalently, $M(\bm U)\bm E_y=fhv\,\bb e_2$, where $\bm E$ denotes the equilibrium
variables, which are used in WB finite-volume methods to perform piecewise polynomial interpolations/reconstructions. The obtained
piecewise polynomial approximant will be exact when the discrete data are at a steady state. Typically, equilibrium variables are constant at
steady states, but unlike similar situations in \cite{Cao2022Flux,Cao2023Flux,Kurganov2023Well}, where relations similar to \eref{2.13} were
established, only three of the components of $\bm E$ ($hv$, $E$, and $hb$) are constant at the steady states. At the same time, $u$ and $a$ are either
linear (if the Coriolis parameter $f(y)\equiv f_c$) or quadratic (if $f(y)=f_c+\beta y$) functions of $y$; see \eref{2.16}. Such functions
can, however, be exactly recovered using piecewise polynomial interpolations/reconstructions, and therefore, the WB scheme that will be
developed in \S\ref{sec22} will rely on the reconstruction of $\bm E$ instead of $\bm U$.
\begin{rmk}
Notice that \eref{2.16} is not valid if $hv=hb$. The case $hv=hb\ne0$ does not correspond to any steady state as this would not satisfy the
system \eref{2.12} for nonzero $f(y)$. The case $hv=hb\equiv0$ corresponds to a very simple ``lake-at-rest'' steady state, which will be
automatically preserved by the scheme designed in \S\ref{sec22}.
\end{rmk}

\subsection{Numerical Method}\label{sec22}
This section describes the 1-D semi-discrete divergence-free flux globalization based WB PCCU scheme for \eref{2.7}--\eref{2.9}. 
We start by introducing a 1-D uniform Cartesian grid with finite-volume cells $C_k=[y_\kmh,y_\kph]$ of size $y_\kph-y_\kmh\equiv\dy$,
$k=1,\ldots,N$. Throughout \S\ref{sec22}, we will use the lower boundary of integration $\widehat y=y_\hf$ to evaluate all of the
global quantities.

We assume that a numerical solution realized in terms of its cell averages,
$$
\xbar{\bm W}_k\approx\frac{1}{\dy}\int\limits_{C_k}\bm W(y,t)\,{\rm d}y,
$$
is available at a certain time $t$. Note that, for the sake of brevity, we omit the time dependence of $\xbar{\bm U}_k$ and other indexed
quantities here and throughout the paper. Within a semi-discrete framework, the solution is evolved in time by solving the following system
of ODEs:
\begin{equation*}
\frac{{\rm d}}{{\rm d}t}\,\xbar{\bm W}_k=-\frac{\bm{\mc{H}}_\kph-\bm{\mc{H}}_\kmh}{\dy},
\end{equation*}
where 
\begin{equation}
\bm{{\cal H}}_\kph=\frac{s_\kph^+\bm H\big(\bm W^-_\kph\big)-s_\kph^-\bm H\big(\bm W^+_\kph\big)}{s_\kph^+-s_\kph^-}+
\frac{s_\kph^+s_\kph^-}{s_\kph^+-s_\kph^-}\left(\widehat{\bm W}^+_\kph-\widehat{\bm W}^-_\kph\right)
\label{2.18}
\end{equation}
are the WB PCCU numerical fluxes from \cite{Kurganov2023Well}, $\bm H$ is given in \eref{2.7}--\eref{2.9}, and $\bm W_\kph^\pm$ and
$\widehat{\bm W}^\pm_\kph$ are two slightly different approximations of the one-sided point values of $\bm W$ at $y=y_\kph$; see
\S\ref{sec221} for details. In addition, $s_\kph^\pm$ in \eref{2.18} denote the one-sided local speeds of propagation, which can be
estimated using the largest and smallest eigenvalues of the matrix $\frac{\d\bm F}{\d\bm U}(\bm U)-Q(\bm U)$ as follows:
\begin{equation*}
\begin{aligned}
&s_\kph^+=\max\left\{v_\kph^-+\sqrt{\big(b_\kph^-\big)^2+gh_\kph^-},\,v_\kph^++\sqrt{\big(b_\kph^+\big)^2+gh_\kph^+},\,0\right\},\\
&s_\kph^-=\min\left\{v_\kph^--\sqrt{\big(b_\kph^-\big)^2+gh_\kph^-},\,v_\kph^+-\sqrt{\big(b_\kph^+\big)^2+gh_\kph^+},\,0\right\},
\end{aligned}
\end{equation*}
where the point values $h_\kph^\pm$, $v_\kph^\pm$, and $b_\kph^\pm$ will be specified in the next section.

\subsubsection{Well-Balanced Reconstruction} \label{sec221}
The development of the proposed WB scheme hinges on reconstructing the equilibrium variables $\bm E$ instead of the conservative variables
$\bm U$. We therefore first need to compute the discrete values $\bm E_k:=\big((\xbar{hv})_k,u_k,E_k,a_k,(\xbar{hb})_k\big)^\top$ out of the
available cell averages $\xbar{\bm U}_k$:
\begin{equation}
u_k=\frac{(\xbar{hu})_k}{\xbar h_k},\quad E_k=\frac{\big((\xbar{hv})_k\big)^2}{2(\xbar h_k)^2}+g(\xbar{h}_k+Z_k)-
\frac{\big((\xbar{hb})_k\big)^2}{2(\xbar h_k)^2}+P_k,\quad a_k=\frac{(\xbar{ha})_k}{\xbar h_k},
\label{2.19f}
\end{equation}
where $Z_k:=Z(y_k)$ and the values $P_k\approx\int^{y_k}_{y_\hf}fu\,{\rm d}y$ are computed using the trapezoidal rule within the following
recursive formula:
\begin{equation}
P_1=\frac{\dy}{4}\big(f_\hf u_\hf+f_1u_1\big),\quad P_k=P_{k-1}+\frac{\dy}{2}\big(f_{k-1}u_{k-1}+f_ku_k\big),\quad k=2,\ldots,N,
\label{2.20f}
\end{equation}
where $f_\hf:=f(y_\hf)$, $f_k:=f(y_k)$, and $u_\hf$ is determined by the boundary conditions.

Now equipped with the values ${\bm E}_k$, $\xbar B_k$, and $Z_k$, we compute the point values $\bm E^\pm_\kph$, $B^\pm_\kph$, and
$Z^\pm_\kph$ at the cell interfaces $y=y_\kph$. For the fields $hv$, $E$, $B$, and $Z$ we perform a generalized minmod reconstruction
described in Appendix \ref{appxA}, while for $hb$ we replace the slope in \eref{2.26} with $((hb)_y)_k=\,\xbar B_k$. The latter is motivated
by \cite{chertock2022new} and is needed to ensure that the discrete divergence-free condition \eref{2.2} is locally satisfied as long as
this condition is held at $t=0$.

Recall that while three of the equilibrium variables, $hv$, $E$, and $hb$, are constant at the steady states, the remaining two equilibrium
variables, $u(y)$ and $a(y)$, are either linear (if $f(y)\equiv f_c$) or quadratic (if $f(y)\equiv f_c+\beta y$) functions. In the former
case, we use the piecewise linear generalized minmod reconstruction, while in the latter case, we utilize the fifth-order WENO-Z
interpolation briefly described in Appendix \ref{appB}. In both cases, the implemented reconstructions recover the exact
point values $u^\pm_\kph$ and $a^\pm_\kph$ at the steady states satisfying \eref{2.16}.

Now that the point values of the equilibrium variables $\bm E^\pm_\kph$ are available, we compute the corresponding values $h^\pm_\kph$ as
follows. We first compute the water surface values $w_k=\xbar h_k+Z_k$, perform the piecewise linear reconstruction described in Appendix
\ref{appxA} to obtain the point values $w^\pm_\kph$, which, in turn, are used to set $\breve h^+_\kph:=w^+_\kph-Z^+_\kph$ or
$\breve h^-_\kph:=w^-_\kph-Z^-_\kph$. We then exactly solve (following \cite{Cheng2019new,KKLZ}) the cubic equations
\begin{equation}
\frac{\big((hv)^\pm_\kph\big)^2-\big((hb)^\pm_\kph\big)^2}{2\big(h^\pm_\kph\big)^2}+g\left(h^\pm_\kph+Z^\pm_\kph\right)+P_\kph=E^\pm_\kph,
\label{2.29}
\end{equation}
which arise from the definition of $E$ in \eref{2.11} and the global terms $P_\kph$ are evaluated using the midpoint rule within the
following recursive formula:
\begin{equation}
P_\hf=0;\quad P_\kph=P_\kmh+\dy f_ku_k,\quad k=1,\ldots,N.
\label{2.31}
\end{equation}

If the equation for $h^+_\kph$ in \eref{2.29}--\eref{2.31} has no positive solutions, we set $h^+_\kph=\breve h^+_\kph$, while if it has 
more than one positive root, we single out a root corresponding to the physically relevant solution by selecting the root closest to
$\breve h^+_\kph$. A similar algorithm is implemented to obtain $h^-_\kph$ in \eref{2.29}--\eref{2.31}. Once $h^\pm_\kph$ are obtained, we
compute 
\begin{equation*}
(hu)^\pm_\kph=h^\pm_\kph u^\pm_\kph,\quad\mbox{and}\quad(ha)^\pm_\kph=h^\pm_\kph a^\pm_\kph.
\end{equation*}

Next, we explain how to obtain the point values $\bm{\widehat W}_\kph^\pm$ that appear in the numerical diffusion terms on the right-hand
side (RHS) of \eref{2.18}. These modified point values are needed to ensure that at steady states,
$\bm{\widehat W}_\kph^-=\bm{\widehat W}_\kph^+$ and hence the numerical diffusion terms in \eref{2.18} vanish. This, in turn, guarantees the
WB property of the designed scheme as proven in \S\ref{sec223}.

We follow \cite{Kurganov2023Well} and first set $(\widehat{hv})_\kph^\pm=({hv})_\kph^\pm$, $(\widehat{hb})_\kph^\pm=({hb})_\kph^\pm$, and
$\widehat B_\kph^\pm=B_\kph^\pm$ as $hv$, $hb$, and $B$ are constant at steady states. The values $\widehat h_\kph^\pm$ are obtained by
solving the following modified versions of the nonlinear cubic equations in \eref{2.29}:
\begin{equation}
\frac{\big((hv)^\pm_\kph\big)^2-\big((hb)^\pm_\kph\big)^2}{2\big(\,\widehat h^\pm_\kph\big)^2}+g\left(\widehat h^\pm_\kph+Z_\kph\right)+
P_\kph=E^\pm_\kph,
\label{2.44}
\end{equation}
where there only change made is the replacement of $Z^\pm_\kph$ with $Z_\kph:=\big(Z_\kph^-+Z_\kph^+\big)/2$.

Equations \eref{2.44} are solved exactly the same way equations \eref{2.29} have been solved, and it is easy to see that whenever the data 
is locally at steady state, that is, if $(hv)^-_\kph=(hv)^+_\kph$, $(hb)^-_\kph=(hb)^+_\kph$, and $E^-_\kph=E^+_\kph$, then
$\widehat h^-_\kph=\widehat h^+_\kph$.

Finally, we compute $(\widehat{hu})_\kph^\pm=\widehat h_\kph^\pm\cdot u_\kph^\pm$,
$(\widehat{ha})_\kph^\pm=\widehat h_\kph^\pm\cdot a_\kph^\pm$.

\subsubsection{Well-Balanced Evaluation of the Global Fluxes}\label{sec222}
In order to compute the numerical fluxes \eref{2.18}, we first use the cell interface point values computed in \S\ref{sec221} to evaluate
the discrete values of the global fluxes $\bm K(\bm U)$ appearing in \eref{2.7}--\eref{2.9}:
\begin{equation}
\bm K(\bm U_\kph^\pm)=\bm F(\bm U_\kph^\pm)-\bm R_\kph^\pm,
\label{2.36}
\end{equation}
where $\bm R_\kph^\pm$ denote the numerical approximation of the integrals $\bm R$ appearing in \eref{2.9}. In order to ensure the resulting
method is WB and since the integrals $\bm R$ contain the nonconservative products, we follow \cite{Cao2022Flux,Cao2023Flux,Kurganov2023Well}
and use the path-conservative technique to evaluate $\bm R_\kph^\pm$ using the following recursive formulae:
\begin{equation}
\bm R_\hf^-=\bm0,\quad\bm R_\hf^+=\bm Q_{\bm\Psi,\hf},\quad\bm R_\kph^-=\bm R_\kmh^++\bm Q_k,\quad
\bm R_\kph^+=\bm R_\kph^-+\bm Q_{\bm\Psi,\kph},\quad k=1\ldots,N,
\label{2.37}
\end{equation}
where $\bm Q_k$ and $\bm Q_{\bm\Psi,\kph}$ are found using appropriate quadratures for 
\begin{equation*}
\bm Q_k\approx\int\limits_{C_k}\left[Q(\bm U)\bm U_y+\bm S(\bm U)\right]{\rm d}y\quad\mbox{and}\quad
\bm Q_{\bm\Psi,\kph}\approx\int\limits_0^1Q(\bm\Psi_\kph(s))\bm\Psi'_\kph(s)\,{\rm d}s.
\end{equation*}
Here, $\bm\Psi_\kph(s):=\bm\Psi(s;\bm U_\kph^-,\bm U_\kph^+)$ is a certain path connecting the states $\bm U_\kph^-$ and $\bm U_\kph^+$ at
the cell interface $y=y_\kph$. In order to ensure the WB property, we use a line segment connecting the left and right cell interface values
of the equilibrium variables:
\begin{equation*}
\bm E_\kph(s):=\bm E_\kph^-+s(\bm E_\kph^+-\bm E_\kph^-),\quad s\in[0,1],
\end{equation*}
which corresponds to a particular path $\bm\Psi_\kph(s)$, whose detailed structure is only given implicitly; see \cite{Kurganov2023Well} for
details. We then use the technique introduced in \cite{Kurganov2023Well} to compute
\begin{equation*}
\bm Q_k\approx\bm F(\bm U_\kph^-)-\bm F(\bm U_\kmh^+)-\hf\left[M(\bm U_\kph^-)+M(\bm U_\kmh^+)\right]\left(\bm E_\kph^--\bm E_\kmh^+\right)
+\dy f_k(\xbar{hv})_k\cdot\bb{e}_2,
\end{equation*}
and
\begin{equation}
\bm Q_{\bm\Psi,\kph}\approx\bm F(\bm U_\kph^+)-\bm F(\bm U_\kph^-)-\hf\left[M(\bm U_\kph^+)+M(\bm U_\kph^-)\right]\left(\bm E_\kph^+-
\bm E_\kph^-\right),
\label{2.26f}
\end{equation}
where $\bm F(\bm U)$ and $M(\bm U)$ are defined in \eref{2.6} and \eref{2.15}, respectively.

\subsubsection{Well-Balanced Property}\label{sec223}
We now prove that the proposed 1-D scheme exactly preserves the steady-states \eref{2.11}, \eref{2.16}.
\begin{thm}
The 1-D semi-discrete flux globalization based WB PCCU scheme is WB in the sense that it can exactly preserve the family of
steady states in \eref{2.11}, \eref{2.16}.
\end{thm}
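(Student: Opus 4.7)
The plan is to show that at any steady state in the family \eref{2.11}, \eref{2.16}, the semi-discrete residual $(\bm{\mc{H}}_\kph-\bm{\mc{H}}_\kmh)/\dy$ vanishes. This requires two facts: (a) at every interface $y_\kph$, $\bm{\mc{H}}_\kph$ collapses to a single common value $\bm H(\bm W_\kph)$ (i.e., the numerical diffusion term in \eref{2.18} disappears, and the convex combination in the convective part reduces to one value); and (b) this common value is independent of $k$.

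First, I would establish exactness of the reconstructions at equilibrium. Since $hv$, $E$, and $hb$ are constants, the generalized minmod reconstruction recovers them exactly, giving $(hv)^-_\kph=(hv)^+_\kph$, $E^-_\kph=E^+_\kph$, $(hb)^-_\kph=(hb)^+_\kph$; moreover $B\equiv 0$ yields $B^\pm_\kph=0$. The velocities $u(y)$ and $a(y)$ are linear in $y$ when $\beta=0$ and quadratic when $\beta\neq0$ by \eref{2.16}, and are therefore exactly reproduced by the piecewise-linear minmod or fifth-order WENO-Z interpolation used in \S\ref{sec221}, yielding continuous interface values $u^-_\kph=u^+_\kph=u(y_\kph)$ and analogously for $a$. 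The modified cubic \eref{2.44} for $\widehat h$ uses the common $Z_\kph$ on both sides and identical $(hv,hb,E)$ data, so $\widehat h^-_\kph=\widehat h^+_\kph$; then $(\widehat{hu})^\pm$ and $(\widehat{ha})^\pm$ also agree and $\widehat{\bm W}^-_\kph=\widehat{\bm W}^+_\kph$, eliminating the diffusion term in \eref{2.18}.

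Next, I would match the global flux across each interface. Substituting $\bm E^-_\kph=\bm E^+_\kph$ into \eref{2.26f} gives $\bm Q_{\bm\Psi,\kph}=\bm F(\bm U^+_\kph)-\bm F(\bm U^-_\kph)$, and the recursion \eref{2.37} then yields $\bm K(\bm U^+_\kph)=\bm K(\bm U^-_\kph)$. Together with $vB=0$ in the last component, this shows $\bm H(\bm W^-_\kph)=\bm H(\bm W^+_\kph)$, so the convective part of \eref{2.18} reduces to this common vector. Note that $\bm U^-_\kph$ and $\bm U^+_\kph$ themselves need not coincide (the cubic \eref{2.29} uses different $Z^\pm_\kph$), but the flux globalization is precisely designed so that the discrepancy is absorbed into $\bm R^\pm_\kph$.

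The main obstacle --- and the key technical computation --- is verifying that $\bm K$ is also constant across cells, i.e., $\bm K(\bm U^-_\kph)=\bm K(\bm U^+_\kmh)$. Via \eref{2.37} this reduces to $\bm Q_k=\bm F(\bm U^-_\kph)-\bm F(\bm U^+_\kmh)$ at equilibrium, which by the quadrature from \S\ref{sec222} amounts to the identity $\hf[M(\bm U^-_\kph)+M(\bm U^+_\kmh)](\bm E^-_\kph-\bm E^+_\kmh)=\dy f_k(\xbar{hv})_k\bb{e}_2$. The plan is to exploit that only the $u$- and $a$-slots of $\bm E^-_\kph-\bm E^+_\kmh$ are nonzero, and that the entries of $M$ multiplying them ($hv$ and $-hb$) are constants at equilibrium, so the left-hand side collapses to $hv(u^-_\kph-u^+_\kmh)-hb(a^-_\kph-a^+_\kmh)$ in the second slot and $-hb(u^-_\kph-u^+_\kmh)+hv(a^-_\kph-a^+_\kmh)$ in the fourth. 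Integrating the steady-state relations \eref{2.12} over $C_k$, using exact reconstruction of $u,a$ and $\int_{C_k}f(y)\,{\rm d}y=\dy f_k$ (exact for affine $f$), shows that the second slot equals $\dy f_k(hv)$ and the fourth vanishes, matching the right-hand side since $(\xbar{hv})_k=hv$ at equilibrium. Assembling the three steps gives $\bm{\mc{H}}_\kph\equiv{\rm const}$ in $k$, and thus $\frac{\rm d}{{\rm d}t}\,\xbar{\bm W}_k=\bm 0$, completing the proof.
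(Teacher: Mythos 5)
Your proof is correct and follows the same skeleton as the paper's: eliminate the numerical diffusion via $\widehat{\bm W}^-_\kph=\widehat{\bm W}^+_\kph$, then establish $\bm K(\bm U^+_\kph)=\bm K(\bm U^-_\kph)=\bm K(\bm U^+_\kmh)$ through the recursions \eref{2.37}, with the cross-interface equality being immediate and the cross-cell equality reducing to the identity $\hf\big[M(\bm U^-_\kph)+M(\bm U^+_\kmh)\big]\big(\bm E^-_\kph-\bm E^+_\kmh\big)=\dy f_k(\xbar{hv})_k\,\bb e_2$, whose only nontrivial components are the second and fourth (the paper's \eref{2.53}). The one place you genuinely diverge is in verifying that identity: the paper substitutes the closed-form equilibrium profiles \eref{2.16} for $u$ and $a$ into \eref{2.53} and checks the algebra, whereas you integrate the steady-state relations \eref{2.12} over $C_k$, using exactness of the reconstruction at the cell interfaces and the fact that the midpoint value $\dy f_k$ equals $\int_{C_k}f(y)\,{\rm d}y$ exactly for affine $f$. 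Your route is slightly cleaner and makes transparent \emph{why} the quadrature $\dy f_k(\xbar{hv})_k$ is the right discretization of the source (it is the exact cell integral of $fhv$ at equilibrium), and it would generalize to any $f$ for which the chosen quadrature and reconstruction are exact; the paper's direct substitution has the advantage of not presupposing that the reconstructed interface values coincide with the analytic profile, verifying it en route. You are also right, and slightly more careful than the written proof, in flagging that $\bm U^-_\kph\ne\bm U^+_\kph$ in general (because \eref{2.29} uses $Z^\pm_\kph$) while $\bm K$ still matches, and in handling the trivial $vB$ component.
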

\begin{proof}
Assume that at a certain time level, the computed solution is at the steady state satisfying \eref{2.11} and \eref{2.16} at the discrete
level, namely: 
\begin{equation}
(hv)_\kph^\pm=(\xbar{hv})_k\equiv(hv)_{\rm eq},\quad E_\kph^\pm=\xbar E_k\equiv E_{\rm eq},\quad
(hb)_\kph^\pm=(\xbar{hb})_k\equiv(hb)_{\rm eq},\quad\forall k,
\label{2.50}
\end{equation} 
where $(hv)_{\rm eq}$, $E_{\rm eq}$, and $(hb)_{\rm eq}$ are constants, and
\begin{equation}
u_\kph^-=u_\kph^+=:u_\kph,\quad a_\kph^-=a_\kph^+=:a_\kph,\quad\forall k.
\label{2.51}
\end{equation} 
In order to show that the steady states are preserved exactly, we must show that
\begin{equation}
\bm K(\bm U_\kph^+)=\bm K(\bm U_\kph^-)=\bm K(\bm U_\kmh^+),\quad\forall k.
\label{2.52}
\end{equation}

We begin with the first equality in \eref{2.52}, which is, according to \eref{2.36}, equivalent to
$$
\bm F(\bm U_\kph^+)-\bm R_\kph^+=\bm F(\bm U_\kph^-)-\bm R_\kph^-, 
$$
which then can be rewritten using the definition of $\bm R_\kph^\pm$ in \eref{2.37}, \eref{2.26f} as
$$ 
\bm F(\bm U_\kph^+)-\bm F(\bm U_\kph^-)-\bm Q_{\bm\Psi,\kph}=\hf\left[M(\bm U_\kph^+)+M(\bm U_\kph^-)\right]\left(\bm E_\kph^+-
\bm E_\kph^-\right)=\bm0,
$$
which is true due to the assumptions in \eref{2.50} and \eref{2.51}.

We then proceed with proving that $\bm K(\bm U_\kmh^+)=\bm K(\bm U_\kph^-)$, which, similarly to the first equality in \eref{2.52}, can be
equivalently rewritten as
\begin{equation}
\hf\left[M(\bm U_\kph^-)+M(\bm U_\kmh^+)\right]\left(\bm E_\kph^--\bm E_\kmh^+\right)-\dy f_k(\xbar{hv})_k\cdot\bb{e}_2=\bm0.
\label{2.31f}
\end{equation}
The assumptions in \eref{2.50} immediately imply that the equalities in \eref{2.31f} are valid for the first, third, and fifth components.
However, proving the second and fourth components in \eref{2.31f} is less straightforward since the profiles of $u$ and $a$ are (generally)
nonconstant when at steady states.

The second and fourth components of \eref{2.31f} read as
\begin{equation*}
\begin{aligned}
\big[u_\kph^-+u_\kmh^+\big]\big((hv)_\kph^-&-(hv)_\kmh^+\big)+\big[(hv)_\kph^-+(hv)_\kmh^+\big]\big(u_\kph^--u_\kmh^+\big)\\
&-\big[(hb)_\kph^-+(hb)_\kmh^+\big]\big(a_\kph^--a_\kmh^+\big)-2\dy f_k(\xbar{hv})_k=0,\\
\big[a_\kph^-+a_\kmh^+\big]\big((hv)_\kph^-&-(hv)_\kmh^+\big)+\big[(hv)_\kph^-+(hv)_\kmh^+\big]\big(a_\kph^--a_\kmh^+\big)\\
&-\big[(hb)_\kph^-+(hb)_\kmh^+\big]\big(u_\kph^--u_\kmh^+\big)=0.
\end{aligned}
\end{equation*}
Using \eref{2.50}, the above can be simplified to obtain that \eref{2.31f} is equivalent to
\begin{equation}
\begin{aligned}
&(hv)_{\rm eq}\big(u_\kph^--u_\kmh^+\big)-(hb)_{\rm eq}\big(a_\kph^--a_\kmh^+\big)-\dy f_k(hv)_{\rm eq}=0,\\
&(hv)_{\rm eq}\big(a_\kph^--a_\kmh^+\big)-(hb)_{\rm eq}\big(u_\kph^--u_\kmh^+\big)=0.
\end{aligned}
\label{2.53}
\end{equation}
Finally, we recall that our piecewise linear (in the case when $\beta=0$) or piecewise quadratic (in the case when $\beta\ne0$)
reconstruction of $u$ and $a$ is exact when the computed solution is at the discrete steady state. Therefore, in this case, we use
\eref{2.16} to obtain
\begin{equation*}
\begin{aligned}
&u_{k\pm\hf}^\mp=u(y_{k\pm\hf})=\frac{(hv)_{\rm eq}^2}{(hv)_{\rm eq}^2-(hb)_{\rm eq}^2}\Big(f_cy_{k\pm\hf}+\frac{\beta}{2}y_{k\pm\hf}^2\Big)
+u_c,\\
&a_{k\pm\hf}^\mp=a(y_{k\pm\hf})=\frac{(hv)_{\rm eq}(hb)_{\rm eq}}{(hv)_{\rm eq}^2-(hb)_{\rm eq}^2}\Big(f_cy_{k\pm\hf}+
\frac{\beta}{2}y_{k\pm\hf}^2\Big)+a_c,
\end{aligned}
\end{equation*}
which we substitute into \eref{2.53} to verify that the equalities there are true. This completes the proof of the theorem.
\end{proof}

\section{2-D Well-Balanced PCCU Scheme for MRSW Equations}\label{sec3}
\subsection{Governing Equations}\label{sec31}
In this section, we extend our studies to the 2-D MRSW system. As in the 1-D case, we modify the 2-D MRSW system by including the
Godunov-Powell source terms to help enforce the divergence-free constraint at the discrete level. The adjusted 2-D system then reads as
\begin{equation}
\begin{aligned}
&h_t+(hu)_x+(hv)_y=0,\\
&(hu)_t+\left(hu^2+\frac{g}{2}h^2-ha^2\right)_x+(huv-hab)_y=fhv-ghZ_x-a\left[(ha)_x+(hb)_y\right],\\
&(hv)_t+(huv-hab)_x+\left(hv^2+\frac{g}{2}h^2-hb^2\right)_y=-fhu-ghZ_y-b\left[(ha)_x+(hb)_y\right],\\
&(ha)_t+(hbu-hav)_y=-u\left[(ha)_x+(hb)_y\right],\\
&(hb)_t+(hav-hbu)_x=-v\left[(ha)_x+(hb)_y\right].
\end{aligned}
\label{3.1}
\end{equation}
It is easy to show that a solution of \eref{3.1} satisfies the 2-D divergence-free constraint 
\begin{equation}
(ha)_x+(hb)_y=0,
\label{3.2}
\end{equation}
as long as it is met initially. The system \eref{3.1} can be rewritten in the following vector form:
\begin{equation}
\bm U_t+\bm F^x(\bm U)_x+\bm F^y(\bm U)_y=Q^x(\bm U)\bm U_x+Q^y(\bm U)\bm U_y+\bm S^x(\bm U)+\bm S^y(\bm U), 
\label{3.4}
\end{equation}
where $\bm U=(h,hu,hv,ha,hb)^\top$ and
\begin{equation}
\hspace*{-0.18cm}
\begin{aligned}
&\bm F^x(\bm U)=\begin{pmatrix}hu\\hu^2+\frac{g}{2}h^2-ha^2\\huv-hab\\0\\hbu-hav\end{pmatrix}\!,~
Q^x(\bm U)=\begin{pmatrix}0&0&0&0&0\\0&0&0&-a&0\\0&0&0&-b&0\\0&0&0&-u&0\\0&0&0&-v&0\end{pmatrix}\!,~
\bm S^x(\bm U)=\begin{pmatrix}0\\fhv-ghZ_x\\0\\0\\0\end{pmatrix}\!,\\
&\bm F^y(\bm U)=\begin{pmatrix}hv\\huv-hab\\hv^2+\frac{g}{2}h^2-hb^2\\hav-hbu\\0\end{pmatrix}\!,~
Q^y(\bm U)=\begin{pmatrix}0&0&0&0&0\\0&0&0&0&-a\\0&0&0&0&-b\\0&0&0&0&-u\\0&0&0&0&-v\end{pmatrix}\!,~
\bm S^y(\bm U)=\begin{pmatrix}0\\0\\-fhu-ghZ_y\\0\\0\end{pmatrix}\!.
\end{aligned}
\label{3.5}
\end{equation}
Notice that the additional terms $Q^x(\bm U)\bm U_x+Q^y(\bm U)\bm U_y$ are theoretically zero due to \eref{3.2}, but their inclusion help to
enforce the divergence-free condition in \eref{3.2} at the discrete level. 

As in the 1-D case, we use the idea introduced in \cite{chertock2022new} to locally preserve the discrete divergence-free constraint: We
introduce the new variables $A:=(ha)_x$ and $B:=(hb)_y$ that satisfy the additional evolution equations for the magnetic field derivatives
\begin{equation}
\begin{aligned}
&A_t+\big(uA-hbu_y\big)_x+\big(vA+hav_x\big)_y=0,\\
&B_t+\big(uB+hbu_y\big)_x+\big(vB-hav_x\big)_y=0,   
\end{aligned}
\label{3.3}
\end{equation}
which are obtained by differentiating the $(ha)$- and $(hb)$-equation in \eref{3.1} with respect to $x$ and $y$, respectively. As shown in
\cite{chertock2022new}, it is relatively easy to discretize the augmented system \eref{3.4}--\eref{3.3} and design a numerical scheme that
preserves the divergence-free constraint \eref{3.2}. However, the resulting scheme will not necessarily be WB. 

In order to derive a WB scheme for the augmented system \eref{3.4}--\eref{3.3}, we follow the 2-D flux globalization approach recently
introduced in \cite{CKL23} and rewrite the studied system in the following quasi-conservative form:
\begin{equation}
\begin{aligned}
&\bm W_t+\bm H^x(\bm W)_x+\bm H^y(\bm W)_y=\bm0,\\
\bm W:=\begin{pmatrix}\bm U\\A\\B\end{pmatrix},\quad&\bm H^x(\bm W):=\begin{pmatrix}\bm K^x(\bm U)\\uA-hbu_y\\uB+hbu_y\end{pmatrix},\quad
\bm H^y(\bm W):=\begin{pmatrix}\bm K^y(\bm U)\\vA+hav_x\\vB-hav_x\end{pmatrix},\\[0.4ex]
\bm K^x(\bm U)&:=\bm F^x(\bm U)-\bm R^x(\bm U),\quad\bm K^y(\bm U):=\bm F^y(\bm U)-\bm R^y(\bm U),
\end{aligned}
\label{3.6}    
\end{equation} 
with the global variables $\bm R^x=(0,R^x_2,R^x_3,R^x_4,R^x_5)^\top$ and $\bm R^y=(0,R^y_2,R^y_3,R^y_4,R^y_5)^\top$:
\begin{equation}
\begin{aligned}
&\bm R^x(\bm U)=\int\limits_{\widehat x}^x\left[Q^x(\bm U(\xi,y,t))\bm U_\xi(\xi,y,t)-\bm S^x(\bm U(\xi,y,t))\right]{\rm d}\xi,\\
&\bm R^y(\bm U)=\int\limits_{\widehat y}^y\left[Q^y(\bm U(x,\eta,t))\bm U_\eta(x,\eta,t)-\bm S^y(\bm U(x,\eta,t))\right]{\rm d}\eta,
\end{aligned}
\label{3.7}
\end{equation}
whose nonzero components are
\begin{equation}
\begin{aligned}
&R^x_2=-\int\limits_{\widehat x}^x\left[a(\xi,y,t)(ha)_\xi(\xi,y,t)-f(y)(hv)(\xi,y,t)+gh(\xi,y,t)Z_\xi(\xi,y)\right]{\rm d}\xi,\\
&R^x_3=-\int\limits_{\widehat x}^xb(\xi,y,t)(ha)_\xi(\xi,y,t)\,{\rm d}\xi,\\
&R^x_4=-\int\limits_{\widehat x}^xu(\xi,y,t)(ha)_\xi(\xi,y,t)\,{\rm d}\xi,\quad
R^x_5=-\int\limits_{\widehat x}^xv(\xi,y,t)(ha)_\xi(\xi,y,t)\,{\rm d}\xi,\\
&R^y_2=-\int\limits_{\widehat y}^ya(x,\eta,t)(hb)_\eta(x,\eta,t)\,{\rm d}\eta,\\
&R^y_3=-\int\limits_{\widehat y}^y\left[b(x,\eta,t)(hb)_\eta(x,\eta,t)+f(\eta)(hu)(x,\eta,t)+gh(x,\eta,t)Z_\eta(x,\eta)\right]{\rm d}\eta,\\
&R^y_4=-\int\limits_{\widehat y}^yu(x,\eta,t)(hb)_\eta(x,\eta,t)\,{\rm d}\eta,\quad
R^y_5=-\int\limits_{\widehat y}^yv(x,\eta,t)(hb)_\eta(x,\eta,t)\,{\rm d}\eta,
\end{aligned}  
\label{3.8}
\end{equation}
and $\widehat x$ and $\widehat y$ are arbitrary numbers.

Note that as in the 1-D case, the corresponding derivatives of the global fluxes, $\bm K^x(\bm U)_x$ and $\bm K^y(\bm U)_y$, can be
rewritten in the following matrix-vector forms:
\begin{equation}
\begin{aligned}
&\bm K^x(\bm U)_x=\bm F^x(\bm U)_x-\bm R^x(\bm U)_x=M^x(\bm U)\bm E^x_x,\\
&\bm K^y(\bm U)_y=\bm F^y(\bm U)_y-\bm R^y(\bm U)_y=M^y(\bm U)\bm E^y_y,
\end{aligned}
\label{3.14}
\end{equation}
where
\begin{equation}
M^x(\bm U):=\begin{pmatrix}1&0&0&0&0\\u&h&0&0&0\\v&0&hu&0&-ha\\0&0&0&u&0\\b&0&-ha&0&hu\end{pmatrix},\quad
M^y(\bm U):=\begin{pmatrix}1&0&0&0&0\\u&hv&0&-hb&0\\v&0&h&0&0\\a&-hb&0&hv&0\\0&0&0&0&v\end{pmatrix},
\label{3.15}
\end{equation}
and $\bm E^x:=(hu,E^x,v,ha,b)^\top$ and $\bm E^y:=(hv,u,E^y,a,hb)^\top$ with
\begin{equation}
\begin{aligned}
&E^x:=\frac{u^2}{2}+g(h+Z)-\frac{a^2}{2}+P^x,&&P^x:=-\int\limits_{\widehat x}^xf(y)v(\xi,y,t)\,{\rm d}\xi,\\
&E^y:=\frac{v^2}{2}+g(h+Z)-\frac{b^2}{2}+P^y,&&P^y:=\int\limits_{\widehat y}^yf(\eta)u(x,\eta,t)\,{\rm d}\eta.
\end{aligned}
\label{3.11a}
\end{equation}

Note that general steady states of the 2-D MRSW system satisfy $\bm K^x(\bm U)_x+\bm K^y(\bm U)_y=\bm0$ or, equivalently,
$M^x(\bm U)\bm E^x_x+M^y(\bm U)\bm E^y_y=\bm0$. It is, however, very hard to design a WB numerical method capable of preserving general
(genuinely 2-D) equilibria. The WB numerical method, which we present in the next section, is constructed to preserve the following
families of quasi 1-D steady states exactly:
\begin{equation}
(hu)_x\equiv0,\quad v\equiv0,\quad(ha)_x\equiv0,\quad b\equiv0,\quad E^x_x\equiv0,\quad E^y_y\equiv0,
\label{3.12}
\end{equation}
and 
\begin{equation}
u\equiv0,\quad(hv)_y\equiv0,\quad a\equiv0,\quad(hb)_y\equiv0,\quad E^x_x\equiv0,\quad E^y_y\equiv0,
\label{3.13}
\end{equation}
for each of which both $\bm K^x(\bm U)_x=M^x(\bm U)\bm E^x_x=\bm0$ and $\bm K^y(\bm U)_y=M^y(\bm U)\bm E^y_y=\bm0$.

\subsection{Numerical Method}\label{sec32}
In this section, we extend the proposed 1-D divergence-free flux globalization based WB PCCU scheme to the 2-D MRSW system
\eref{3.6}--\eref{3.8}.

We assume that at a certain time $t$, the numerical solution realized in terms of its cell averages
$$
\xbar{\bm W}_{j,k}\approx\frac{1}{\dx\dy}\iint\limits_{C_{j,k}}\bm U(x,y,t)\,{\rm d}x\,{\rm d}y,
$$
is available. Here, $C_{j,k}=[x_\jmh,x_\jph]\times[y_\kmh,y_\kph]$ are the 2-D finite volume cells assumed to be uniform, that is,
$x_\jph-x_\jmh\equiv\dx$, $j=1,\ldots,N_x$, and $y_\kph-y_\kmh\equiv\dy$, $k=1,\ldots,N_y$. The cell averages are evolved in time using the
following semi-discretization of \eref{3.6}:
\begin{equation*}
\frac{\rm d}{{\rm d}t}\,\xbar{\bm W}_{j,k}=-\frac{\bm{{\cal H}}^x_{\jph,k}-\bm{{\cal H}}^x_{\jmh,k}}{\dx}-
\frac{\bm{{\cal H}}^y_{j,\kph}-\bm{{\cal H}}^y_{j,\kmh}}{\dy},
\end{equation*}
where 
\begin{equation}
\begin{aligned}
\bm{{\cal H}}^x_{\jph,k}&=\frac{s_{\jph,k}^+\bm H^x\big(\bm W^{\rm E}_{j,k}\big)-s_{\jph,k}^-\bm H^x\big(\bm W^{\rm W}_{j+1,k}\big)}
{s_{\jph,k}^+-s_{\jph,k}^-}+\frac{s_{\jph,k}^+s_{\jph,k}^-}{s_{\jph,k}^+-s_{\jph,k}^-}
\left(\widehat{\bm W}^{\,\rm W}_{j+1,k}-\widehat{\bm W}^{\,\rm E}_{j,k}\right),\\
\bm{{\cal H}}^y_{j,\kph}&=\frac{s_{j,\kph}^+\bm H^y\big(\bm W^{\rm N}_{j,k}\big)-s_{j,\kph}^-\bm H^y\big(\bm W^{\rm S}_{j,k+1}\big)}
{s_{j,\kph}^+-s_{j,\kph}^-}+\frac{s_{j,\kph}^+s_{j,\kph}^-}{s_{j,\kph}^+-s_{j,\kph}^-}
\left(\widehat{\bm W}^{\,\rm S}_{j,k+1}-\widehat{\bm W}^{\,\rm N}_{j,k}\right),
\end{aligned}
\label{3.17}
\end{equation}
are the WB PCCU numerical fluxes from \cite{CKL23}, $\bm H^x$ and $\bm H^y$ are defined in \eref{3.6}--\eref{3.8}, and
$\bm W^{{\rm E,W,N,S}}_{j,k}$ and $\widehat{\bm W}_{j,k}^{\,\rm E,W,N,S}$ are two slightly different approximations of the one-sided point
values of $\bm W$ at the cell interfaces of $C_{j,k}$, see \S\ref{sec321} for details. In addition, $s_{\jph,k}^\pm$ and $s_{j,\kph}^\pm$ in\eref{3.17} are the one-sided speeds of propagation in the $x$- and $y$-directions, respectively. We compute them using the largest and
smallest eigenvalues of matrices $\frac{\partial\bm F^x}{\partial\bm U}(\bm U)-Q^x(\bm U)$ and
$\frac{\partial\bm F^y}{\partial\bm U}(\bm U)-Q^y(\bm U)$ and obtain
$$
\begin{aligned}
&s_{\jph,k}^+=\max\left\{u_{j,k}^{\rm E}+\sqrt{\big(a_{j,k}^{\rm E}\big)^2+gh_{j,k}^{\rm E}},\,
u_{j+1,k}^{\rm W}+\sqrt{\big(a_{j+1,k}^{\rm W}\big)^2+gh_{j+1,k}^{\rm W}},\,0\right\},\\
&s_{\jph,k}^-=\min\left\{u_{j,k}^{\rm E}-\sqrt{\big(a_{j,k}^{\rm E}\big)^2+gh_{j,k}^{\rm E}},\,
u_{j+1,k}^{\rm W}-\sqrt{\big(a_{j+1,k}^{\rm W}\big)^2+gh_{j+1,k}^{\rm W}},\,0\right\},\\
&s_{j,\kph}^+=\max\left\{v_{j,k}^{\rm N}+\sqrt{\big(b_{j,k}^{\rm N}\big)^2+gh_{j,k}^{\rm N}},\,
v_{j,k+1}^{\rm S}+\sqrt{\big(b_{j,k+1}^{\rm S}\big)^2+gh_{j,k+1}^{\rm S}},\,0\right\},\\
&s_{j,\kph}^-=\min\left\{v_{j,k}^{\rm N}-\sqrt{\big(b_{j,k}^{\rm N}\big)^2+gh_{j,k}^{\rm N}},\,
v_{j,k+1}^{\rm S}-\sqrt{\big(b_{j,k+1}^{\rm S}\big)^2+gh_{j,k+1}^{\rm S}},\,0\right\},
\end{aligned}
$$
where the point values $h_{j,k}^{\rm E,W,N,S}$, $u_{j,k}^{\rm E,W}$, $v_{j,k}^{\rm N,S}$, $a_{j,k}^{\rm E,W}$, and $b_{j,k}^{\rm N,S}$ are
specified in the next section.

\subsubsection{Well-Balanced Reconstruction}\label{sec321}
In order to ensure the proposed method preserves the discrete quasi 1-D steady-states \eref{3.12} and \eref{3.13}, it is crucial to
reconstruct the equilibrium variables $\bm E^x:=(hu,E^x,v,ha,b)^\top$ and $\bm E^y:=(hv,u,E^y,a,hb)^\top$ in the $x$- and $y$-directions,
respectively, rather than the conservative variables $\bm U$. To this end, we first need to compute the discrete values
$\bm E^x_{j,k}:=((\xbar{hu})_{j,k},E^x_{j,k},v_{j,k},(\xbar{ha})_{j,k},b_{j,k})^\top$ and
$\bm E^y_{j,k}:=((\xbar{hv})_{j,k},u_{j,k},E^y_{j,k},a_{j,k},(\xbar{hb})_{j,k})^\top$ from the available cell averages $\xbar{\bm U}_{j,k}$:
\begin{equation*}
\begin{aligned}
&v_{j,k}=\frac{(\xbar{hv})_{j,k}}{\xbar h_{j,k}},\quad E^x_{j,k}=\frac{\big((\xbar{hu})_{j,k}\big)^2}{2(\xbar h_{j,k})^2}+
g(\xbar h_{j,k}+Z_{j,k})-\frac{\big((\xbar{ha})_{j,k}\big)^2}{2(\xbar h_{j,k})^2}+P^x_{j,k},\quad
b_{j,k}=\frac{(\xbar{hb})_{j,k}}{\xbar h_{j,k}},\\
&u_{j,k}=\frac{(\xbar{hu})_{j,k}}{\xbar h_{j,k}},\quad E^y_{j,k}=\frac{\big((\xbar{hv})_{j,k}\big)^2}{2(\xbar h_{j,k})^2}+
g(\xbar h_{j,k}+Z_{j,k})-\frac{\big((\xbar{hb})_{j,k}\big)^2}{2(\xbar h_{j,k})^2}+P^y_{j,k},\quad
a_{j,k}=\frac{(\xbar{ha})_{j,k}}{\xbar h_{j,k}},
\end{aligned}
\end{equation*}
where $Z_{j,k}:=Z(x_j,y_k)$ and the discrete values $P^x_{j,k}$ and $P^y_{j,k}$ of the global integral terms in \eref{3.11a} are computed by
setting $\widehat x=x_\hf$ and $\widehat y=y_\hf$, and using the trapezoidal rule within following recursive formulae:
\begin{equation*}
\begin{aligned}
&P^x_{1,k}=-\frac{f_k\dx}{4}\big(v_{\hf,k}+v_{1,k}\big),&&P^x_{j,k}=P^x_{j-1,k}-\frac{f_k\dx}{2}\big(v_{j-1,k}+v_{j,k}\big),&&
j=2,\ldots,N_x,\\
&P^y_{1,k}=\frac{\dy}{4}\big(f_\hf u_{j,\hf}+f_1u_{j,1}\big),&&P^y_{j,k}=P^y_{j,k-1}-\frac{\dy}{2}\big(f_{k-1}u_{j,k-1}+f_ku_{j,k}\big),&&
k=2,\ldots,N_y.
\end{aligned}
\end{equation*}
Here, the values $u_{j,\hf}$ and $v_{\hf,k}$ are determined by the boundary conditions.

Equipped with the values $\bm E^x_{j,k}$, $\bm E^y_{j,k}$, $\xbar A_{j,k}$, $\xbar B_{j,k}$, and $Z_{j,k}$, we compute the corresponding
cell interface point values $(\bm E^x)_{j,k}^{\rm E,W}$, $(\bm E^y)_{j,k}^{\rm N,S}$, $A_{j,k}^{\rm E,W,N,S}$, $B_{j,k}^{\rm E,W,N,S}$, and
$Z_{j,k}^{\rm E,W,N,S}$ in cell $C_{j,k}$. All of these point values except for $(ha)^{\rm E,W}$ and $(hb)^{\rm N,S}$ are reconstructed
using the generalized minmod limiter described in Appendix \ref{appxA}. Computing $(ha)^{\rm E,W}$ and $(hb)^{\rm N,S}$ with the help of
the generalized minmod or any other conventional nonlinear limiter, however, would not guarantee the exact preservation of a discrete
version of the divergence-free constraint \eref{3.2}. We therefore follow \cite{chertock2022new} and enforce the discrete divergence-free
condition $((ha)_x)_{j,k}+((hb)_y)_{j,k}=0$ for all $j,k$ by computing
\begin{equation}
\begin{aligned}
&(ha)^{\rm E}_{j,k}=(\xbar{ha})_{j,k}+\sigma_{j,k}\xbar A_{j,k}\frac{\dx}{2},&&
(ha)^{\rm W}_{j,k}=(\xbar{ha})_{j,k}-\sigma_{j,k}\xbar A_{j,k}\frac{\dx}{2},\\
&(hb)^{\rm N}_{j,k}=(\xbar{hb})_{j,k}+\sigma_{j,k}\xbar B_{j,k}\frac{\dy}{2},&&
(hb)^{\rm S}_{j,k}=(\xbar{hb})_{j,k}-\sigma_{j,k}\xbar B_{j,k}\frac{\dy}{2},
\end{aligned}
\label{3.29}
\end{equation}
where $\sigma_{j,k}=\min\big\{1,\sigma^x_{j,k},\sigma^y_{j,k}\big\}$ and
\begin{equation*}
\begin{aligned}
\sigma^x_{j,k}&=\left\{
\begin{aligned}
&\min\bigg\{1,\frac{((ha)_x)_{j,k}}{\xbar A_{j,k}}\bigg\}&&\mbox{if}~((ha)_x)_{j,k}\,\xbar A_{j,k}>0,\\
&0&&\mbox{otherwise},
\end{aligned}
\right.\\
\sigma^y_{j,k}&=\left\{
\begin{aligned}
&\min\bigg\{1,\frac{((hb)_y)_{j,k}}{\xbar B_{j,k}}\bigg\}&&\mbox{if}~((hb)_y)_{j,k}\,\xbar B_{j,k}>0,\\
&0&&\mbox{otherwise}.
\end{aligned}
\right.
\end{aligned}
\end{equation*}
Here, $((ha)_x)_{j,k}$ and $((hb)_y)_{j,k}$ are the slopes computed using the generalized minmod limiter described in Appendix \ref{appxA}.

As it was shown in \cite{chertock2022new}, the slopes in \eref{3.29} satisfy the local discrete divergence-free property
$\sigma_{j,k}(\xbar A_{j,k}+\xbar B_{j,k})=0$ as long as $\xbar A_{j,k}+\xbar B_{j,k}$ is identically zero initially for all $j,k$.
\begin{rmk}
A scaling similar to the $\sigma_{j,k}$-scaling in \eref{3.29} is redundant in the 1-D case as the 1-D divergence-free condition simply
implies $hb\equiv{\rm Const}$. 
\end{rmk}
\begin{rmk}
Relying on the reconstructions of the magnetic field in \eref{3.29}, one can follow the proof of \cite[Theorem 2.2]{chertock2022new} to
establish the following local divergence-free property of the proposed scheme as stated in the following theorem whose proof we omit for 
the sake of brevity.
\begin{thm}\label{thm3.5}
For the proposed 2-D semi-discrete flux globalization based WB PCCU scheme, the local divergence-free condition
\begin{equation*}
\big((ha)_x\big)_{j,k}+\big((hb)_y\big)_{j,k}=0,
\end{equation*}
holds for all $j,k$ and at all times, provided it is satisfied initially.
\end{thm}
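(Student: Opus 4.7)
The plan is to reduce the claim to a conservation property of the cell-average sum $\Sigma_{j,k}(t):=\xbar A_{j,k}(t)+\xbar B_{j,k}(t)$ and then verify that the semi-discrete ODE system preserves $\Sigma_{j,k}\equiv 0$. First I would observe that the reconstruction formulae \eref{3.29} give, directly from the definition of the discrete derivatives,
$\big((ha)_x\big)_{j,k}=\sigma_{j,k}\xbar A_{j,k}$ and $\big((hb)_y\big)_{j,k}=\sigma_{j,k}\xbar B_{j,k}$,
so the local discrete divergence equals $\sigma_{j,k}\Sigma_{j,k}$. Hence the theorem reduces to showing that $\Sigma_{j,k}(t)=0$ for all $j,k$ and all $t>0$, provided $\Sigma_{j,k}(0)=0$.

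Next, I would note that at the continuous level the sum of the two equations in \eref{3.3} gives the pure advection equation $(A+B)_t+\big(u(A+B)\big)_x+\big(v(A+B)\big)_y=0$, because the $\mp hbu_y$ and $\pm hav_x$ contributions cancel pairwise. The goal is to mirror this cancellation at the discrete level by adding the $A$- and $B$-components of the semi-discrete ODE determined by the PCCU fluxes \eref{3.17}. Since the fluxes $H^x_A,H^x_B$ contain $\mp hbu_y$ and $H^y_A,H^y_B$ contain $\pm hav_x$ with exactly opposite signs, these extra terms cancel identically in each one-sided contribution to $\bm{\mathcal{H}}^{x,y}_{A,\cdot}+\bm{\mathcal{H}}^{x,y}_{B,\cdot}$, leaving combined fluxes that depend only on the interface point values of $u(A+B)$ and $v(A+B)$. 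The numerical-diffusion part also contributes only through $\widehat A+\widehat B$, and, in full analogy with the 1-D construction of \S\ref{sec221} where $\widehat B=B$, the $\widehat{\,\cdot\,}$ modification does not alter the derivative variables, so $\widehat A+\widehat B=A+B$ at every interface.

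It then remains to show that if $\Sigma_{j,k}\equiv 0$, the reconstructed interface point values satisfy $A^{\rm E,W,N,S}_{j,k}+B^{\rm E,W,N,S}_{j,k}=0$. This follows from the fact that $A$ and $B$ are reconstructed independently with the generalized minmod limiter of Appendix \ref{appxA}: whenever $\xbar B_{j,k}=-\xbar A_{j,k}$ for every $(j,k)$, all forward and backward differences entering the limiter for $B$ are the negatives of those entering the limiter for $A$, and since the generalized minmod function is odd in its arguments, the two reconstructed slopes differ by a sign. Therefore $A^{\rm E,W,N,S}_{j,k}+B^{\rm E,W,N,S}_{j,k}=0$ at every interface, which combined with the flux cancellation above yields $\tfrac{d}{dt}\Sigma_{j,k}=0$. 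A standard ODE argument (constant along the evolution) then gives $\Sigma_{j,k}(t)\equiv 0$, completing the proof.

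The main obstacle is the careful bookkeeping in the cancellation of the $hbu_y$ and $hav_x$ contributions inside the PCCU fluxes: one has to verify that in $\bm{\mathcal{H}}^x_{A,\jph,k}$ and $\bm{\mathcal{H}}^x_{B,\jph,k}$ the quantity $hbu_y$ is evaluated with exactly the same interface formula on each side of the Riemann fan (and similarly for $hav_x$ in the $y$-fluxes), so that no spurious residual is left after summation. Once this is established, the remainder of the argument is essentially formal and mirrors the 2-D divergence-free proof of \cite[Theorem 2.2]{chertock2022new}.
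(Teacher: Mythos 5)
Your argument is correct and is essentially the proof the paper has in mind: the paper omits the proof of Theorem~\ref{thm3.5} and defers to \cite[Theorem 2.2]{chertock2022new}, whose argument is exactly your reduction to the invariance of $\xbar A_{j,k}+\xbar B_{j,k}\equiv 0$, the pairwise cancellation of the $hbu_y$ and $hav_x$ contributions in the summed $A$- and $B$-components of the PCCU fluxes, and the sign-symmetry (oddness) of the generalized minmod reconstruction, after which \eref{3.29} gives $\big((ha)_x\big)_{j,k}+\big((hb)_y\big)_{j,k}=\sigma_{j,k}\big(\xbar A_{j,k}+\xbar B_{j,k}\big)=0$. Two cosmetic remarks: the identities $\widehat A=A$ and $\widehat B=B$ that you infer ``by analogy with the 1-D case'' are in fact stated explicitly in \eref{3.48}, and the passage from $\frac{{\rm d}}{{\rm d}t}\big(\xbar A_{j,k}+\xbar B_{j,k}\big)=0$ on the set $\{\xbar A+\xbar B\equiv 0\}$ to invariance of that set is, as in the cited reference, most cleanly closed at the fully discrete level by induction over forward-Euler/SSP-RK stages rather than by a bare ODE-uniqueness appeal.
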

\end{rmk}

In addition to the reconstruction of the aforementioned point values of $\bm E^x$, $\bm E^y$, $A$, $B$, and $Z$, one also needs to evaluate
the point values $(u_y)_{j,k}^{\rm E,W}$ and $(v_x)_{j,k}^{\rm N,S}$ at the corresponding cell interfaces, as they appear in the $A$- and
$B$-equation fluxes in \eref{3.6}. We compute these velocity derivatives using first-order approximations:
\begin{equation}
(u_y)^{\rm E}_{j,k}=(u_y)_{j,k},\quad(u_y)^{\rm W}_{j,k}=(u_y)_{j,k},\quad(v_x)^{\rm N}_{j,k}=(v_x)_{j,k},\quad
(v_x)^{\rm S}_{j,k}=(v_x)_{j,k}.
\label{3.31}
\end{equation}
Recall that the slopes $(u_y)_{j,k}$ and $(v_x)_{j,k}$ have been already computed.
\begin{rmk}
Note that while the reconstructions in \eref{3.31} may lead to a first-order approximation of the magnetic field derivatives $A$ and $B$,
they do not affect the second order of accuracy in the approximation of $\bm U$.
\end{rmk}

Now that the point values of the equilibrium variables $(\bm E^x)_{j,k}^{\rm E,W}$ and $(\bm E^y)_{j,k}^{\rm N,S}$ are available, we compute
the corresponding values $h^{\rm E,W,N,S}$ as follows. As in the 1-D case, we first compute the water surface values
$w_{j,k}=\,\xbar h_{j,k}+Z_{j,k}$ and perform the piecewise linear reconstruction described in Appendix \ref{appxA} to obtain the point
values $w_{j,k}^i$, which, in turn, are used to set $\breve h_{j,k}^i:=w_{j,k}^i-Z_{j,k}$ for $i\in\{\rm E,W,N,S\}$. We then exactly solve 
the following cubic equations:
\allowdisplaybreaks
\begin{align}
&\frac{\big((hu)^{\rm E}_{j,k}\big)^2-\big((ha)^{\rm E}_{j,k}\big)^2}{2\big(h^{\rm E}_{j,k}\big)^2}+
g\left(h^{\rm E}_{j,k}+Z^{\rm E}_{j,k}\right)+P^x_{\jph,k}=(E^x)^{\rm E}_{j,k},\label{3.33}\\
&\frac{\big((hu)^{\rm W}_{j,k}\big)^2-\big((ha)^{\rm W}_{j,k}\big)^2}{2\big(h^{\rm W}_{j,k}\big)^2}+
g\left(h^{\rm W}_{j,k}+Z^{\rm W}_{j,k}\right)+P^x_{\jmh,k}=(E^x)^{\rm W}_{j,k},\label{3.34}\\
&\frac{\big((hv)^{\rm N}_{j,k}\big)^2-\big((hb)^{\rm N}_{j,k}\big)^2}{2\big(h^{\rm N}_{j,k}\big)^2}+
g\left(h^{\rm N}_{j,k}+Z^{\rm N}_{j,k}\right)+P^y_{j,\kph}=(E^y)^{\rm N}_{j,k},\label{3.35}\\
&\frac{\big((hv)^{\rm S}_{j,k}\big)^2-\big((hb)^{\rm S}_{j,k}\big)^2}{2\big(h^{\rm S}_{j,k}\big)^2}+
g\left(h^{\rm S}_{j,k}+Z^{\rm S}_{j,k}\right)+P^y_{j,\kmh}=(E^y)^{\rm S}_{j,k},\label{3.36},
\end{align}
which arise from the definitions of $E^x$ and $E^y$ in \eref{3.11a}. In \eref{3.33}--\eref{3.36}, the global terms $P^x_{\jph,k}$ and
$P^y_{j,\kph}$ are evaluated using the midpoint rule within the following recursive formulae:
\begin{equation}
\begin{aligned}
&P^x_{\hf,k}=0;&&P^x_{\jph,k}=P^x_{\jmh,k}-\dx f_kv_{j,k},\\
&P^y_{j,\hf}=0;&&P^y_{j,\kph}=P^y_{j,\kmh}+\dy f_ku_{j,k},
\end{aligned}\qquad j=1,\ldots,N_x,~k=1,\ldots,N_y.
\label{3.37}
\end{equation}

If \eref{3.33} has no positive solution, we set $h^{\rm E}_{j,k}=\breve h^{\rm E}_{j,k}$. In contrast, if it has more than one positive 
root, we single out a root corresponding to the physically relevant solution by selecting the root closest to $\breve h^{\rm E}_{j,k}$. A 
similar algorithm is implemented to obtain $h^{\rm W}_{j,k}$, $h^{\rm N}_{j,k}$, and $h^{\rm S}_{j,k}$. Once $h_{j,k}^{\rm E,W,N,S}$ are 
obtained, we compute
\begin{equation*}
\begin{aligned}
(hv)_{j,k}^{\rm E,W}=h_{j,k}^{\rm E,W}v_{j,k}^{\rm E,W},\quad(hb)_{j,k}^{\rm E,W}=h_{j,k}^{\rm E,W}b_{j,k}^{\rm E,W},\quad
(hu)_{j,k}^{\rm N,S}=h_{j,k}^{\rm N,S}u_{j,k}^{\rm N,S},\quad(ha)_{j,k}^{\rm N,S}=h_{j,k}^{\rm N,S}a_{j,k}^{\rm N,S}. 
\end{aligned}
\end{equation*}

Next, we explain how to obtain the point values $\widehat{\bm W}_{j,k}^{\,\rm E,W,N,S}$ that appear in the numerical diffusion terms on the
RHS of \eref{3.17}. As in the 1-D case, these modified values are needed to ensure that at steady states,
$\widehat{\bm W}_{j,k}^{\,\rm E}=\widehat{\bm W}_{j+1,k}^{\,\rm W}$ and $\widehat{\bm W}_{j,k}^{\,\rm N}=\widehat{\bm W}_{j,k+1}^{\,\rm S}$,
and hence the numerical diffusion terms in \eref{3.17} vanish. This, in turn, guarantees the WB property of the resulting scheme as proven
\S\ref{sec323}.

We follow \cite{CKL23} and first set
\begin{equation}
\begin{aligned}
&(\widehat{hu})_{j,k}^{\rm E}=({hu})_{j,k}^{\rm E},&&(\widehat{ha})_{j,k}^{\rm E}=({ha})_{j,k}^{\rm E},&&
\widehat A_{j,k}^{\,\rm E}=A_{j,k}^{\rm E},&&\widehat B_{j,k}^{\,\rm E}=B_{j,k}^{\rm E},\\
&(\widehat{hu})_{j+1,k}^{\rm W}=({hu})_{j+1,k}^{\rm W},&&(\widehat{ha})_{j+1,k}^{\rm W}=({ha})_{j+1,k}^{\rm W},&&
\widehat A_{j+1,k}^{\,\rm W}=A_{j+1,k}^{\rm W},&&\widehat B_{j+1,k}^{\,\rm W}=B_{j+1,k}^{\rm W},
\end{aligned}
\label{3.48}
\end{equation}
as the variables $hu$, $ha$, $A$, and $B$ do not vary along the $x$-direction when the computed solution is at either the \eref{3.12} or
\eref{3.13} steady state. We then compute the values $\widehat h_{j,k}^{\,\rm E}$ and $\widehat h_{j+1,k}^{\,\rm W}$ by solving the
following modified versions of the nonlinear equations \eref{3.33} and \eref{3.34}, respectively:
\begin{equation}
\begin{aligned}
&\frac{\big((hu)^{\rm E}_{j,k}\big)^2-\big((ha)^{\rm E}_{j,k}\big)^2}{2\big(\,\widehat h^{\,\rm E}_{j,k}\big)^2}+
g\left(\widehat h^{\,\rm E}_{j,k}+Z_{j,k}\right)+P^x_{\jph,k}=(E^x)^{\rm E}_{j,k},\\
&\frac{\big((hu)^{\rm W}_{j,k}\big)^2-\big((ha)^{\rm W}_{j,k}\big)^2}{2\big(\,\widehat h^{\,\rm W}_{j,k}\big)^2}+
g\left(\widehat h^{\,\rm W}_{j,k}+Z_{j,k}\right)+P^x_{\jmh,k}=(E^x)^{\rm W}_{j,k},
\end{aligned}
\label{3.49}
\end{equation}
where $P^x_{\jph,k}$ is defined in \eref{3.37}, and the only adjustment made to the nonlinear equations \eref{3.33} and \eref{3.34} is the
replacement of $Z_{j,k}^{\rm E}$ and $Z_{j+1,k}^{\rm W}$ with
$$
Z_{\jph,k}=\hf\left(Z_{j,k}^{\rm E}+Z_{j+1,k}^{\rm W}\right).
$$
The equations in \eref{3.49} are solved using the same cubic equation solver as in \S\ref{sec221}.

Notice that when the computed solution is at steady-state, the equations in \eref{3.49} are identical since, in this case,
$(E^x)_{j,k}^{\rm E}=(E^x)_{j+1,k}^{\rm W}$ and the equalities in \eref{3.48} hold. In addition, if $Z_{j,k}^{\rm E}=Z_{j+1,k}^{\rm W}$, the
equations in \eref{3.49} coincide with \eref{3.33} and \eref{3.34} and therefore, at these cell interfaces, we simply set
\begin{equation*}
\widehat h_{j,k}^{\,\rm E}=h_{j,k}^{\rm E}\quad\mbox{and}\quad\widehat h_{j+1,k}^{\,\rm W}=h_{j+1,k}^{\rm W}.
\end{equation*}

Finally, we obtain the discrete modifications of $hv$ and $hb$ as follows: 
\begin{equation*}
(\widehat{hv})_{j,k}^{\rm E}=\widehat h_{j,k}^{\,\rm E}\cdot v_{j,k}^{\rm E},\quad
(\widehat{hv})_{j+1,k}^{\rm W}=\widehat h_{j+1,k}^{\,\rm W}\cdot v_{j+1,k}^{\rm W},\quad
(\widehat{hb})_{j,k}^{\rm E}=\widehat h_{j,k}^{\,\rm E}\cdot b_{j,k}^{\rm E},\quad
(\widehat{hb})_{j+1,k}^{\rm W}=\widehat h_{j+1,k}^{\,\rm W}\cdot b_{j+1,k}^{\rm W},
\end{equation*}
thus completing the computation of $\bm{\widehat W}_{j,k}^{\,\rm E}$ and $\bm{\widehat W}_{j+1,k}^{\,\rm W}$.

The modified values $\bm{\widehat W}_{j,k}^{\,\rm N}$ and $\bm{\widehat W}_{j,k+1}^{\,\rm S}$ in the $y$-direction are computed in a similar
manner; we omit the details for the sake of brevity.

\subsubsection{Well-Balanced Evaluation of the Global Fluxes}\label{sec322}
In order to compute the numerical fluxes \eref{3.17}, we first use the cell interface point values computed in \S\ref{sec321} to evaluate
the discrete values of the global fluxes $\bm K^x(\bm U)$ and $\bm K^y(\bm U)$ appearing in \eref{3.14}:
\begin{equation}
\begin{aligned}
&(\bm K^x)_{j,k}^{\rm E,W}:=\bm K^x\big(\bm U_{j,k}^{\rm E,W}\big)=\bm F^x\big(\bm U_{j,k}^{\rm E,W}\big)-(\bm R^x)_{j,k}^{\rm E,W},\\
&(\bm K^y)_{j,k}^{\rm N,S}:=\bm K^y\big(\bm U_{j,k}^{\rm N,S}\big)=\bm F^y\big(\bm U_{j,k}^{\rm N,S}\big)-(\bm R^y)_{j,k}^{\rm N,S},
\end{aligned}
\label{3.39}
\end{equation}
where $(\bm R^x)_{j,k}^{\rm E,W}$ and $(\bm R^y)_{j,k}^{\rm N,S}$ denote the numerical approximations of the integral terms $\bm R^x$ and
$\bm R^y$ appearing in \eref{3.8}. In order to ensure the resulting method is WB and since the integrals in \eref{3.8} contain the
nonconservative products, we use the path-conservative technique to evaluate $(\bm R^x)_{j,k}^{\rm E,W}$ and $(\bm R^y)_{j,k}^{\rm N,S}$
using the following recursive formulae for all $1\le j\le N_x$ and $1\le k\le N_y$:
\begin{equation}
\hspace*{-0.1cm}\begin{aligned}
&(\bm R^x)_{0,k}^{\rm E}=\bm0,&&\hspace*{-0.15cm}(\bm R^x)_{1,k}^{\rm W}=\bm Q^x_{\bm\Psi,\hf,k},&&\hspace*{-0.15cm}(\bm R^x)_{j,k}^{\rm E}
=(\bm R^x)_{j,k}^{\rm W}+\bm Q^x_{j,k},&&\hspace*{-0.15cm}(\bm R^x)_{j+1,k}^{\rm W}=(\bm R^x)_{j,k}^{\rm E}+\bm Q^x_{\bm\Psi,\jph,k},\\
&(\bm R^y)_{j,0}^{\rm N}=\bm0,&&\hspace*{-0.15cm}(\bm R^y)_{j,1}^{\rm S}=\bm Q^y_{\bm\Psi,j,\hf},&&\hspace*{-0.15cm}(\bm R^y)_{j,k}^{\rm N}
=(\bm R^y)_{j,k}^{\rm S}+\bm Q^y_{j,k},&&\hspace*{-0.15cm}(\bm R^y)_{j,k+1}^{\rm S}=(\bm R^y)_{j,k}^{\rm N}+\bm Q^y_{\bm\Psi,j,\kph},
\end{aligned}
\label{3.40}
\end{equation}
where $\bm Q^x_{j,k}$, $\bm Q^x_{\bm\Psi,\jph,k}$, $\bm Q^y_{j,k}$, and $\bm Q^y_{\bm\Psi,j,\kph}$ are found using appropriate quadratures
for
\begin{equation*}
\begin{aligned}
&\bm Q^x_{j,k}\approx\int\limits_{x_\jmh}^{x_\jph}\left[Q^x(\bm U)\bm U_x+\bm S^x(\bm U)\right]{\rm d}x,&&
\bm Q^x_{\bm\Psi,\jph,k}\approx\int\limits_0^1Q^x(\bm\Psi_{\jph,k}(s))\bm\Psi'_{\jph,k}(s)\,{\rm d}s,\\
&\bm Q^y_{j,k}\approx\int\limits_{y_\kmh}^{y_\kph}\left[Q^y(\bm U)\bm U_y+\bm S^y(\bm U)\right]{\rm d}y,&&
\bm Q^y_{\bm\Psi,j,\kph}\approx\int\limits_0^1Q^y(\bm\Psi_{j,\kph}(s))\bm\Psi'_{j,\kph}(s)\,{\rm d}s.
\end{aligned}
\end{equation*}
Here, $\bm\Psi_{\jph,k}(s):=\bm\Psi(s;\bm U_{j,k}^{\rm E},\bm U_{j+1,k}^{\rm W})$ is a certain path connecting the interface states
$\bm U_{j,k}^{\rm E}$ and $\bm U_{j+1,k}^{\rm W}$. In order to ensure the WB property, we use a line segment connecting the equilibrium
variables,
\begin{equation*}
\bm E_{\jph,k}(s):=(\bm E^x)_{j,k}^{\rm E}+s\left[(\bm E^x)_{j+1,k}^{\rm W}-(\bm E^x)_{j,k}^{\rm E}\right],\quad s\in[0,1],
\end{equation*}
which  corresponds to a particular path $\bm\Psi_{\jph,k}(s)$, whose detailed structure is only given implicitly. Similarly, the path
$\bm\Psi_{j,\kph}(s)$ is implicitly defined using the line segment 
\begin{equation*}
\bm E_{j,\kph}(s):=(\bm E^y)_{j,k}^{\rm N}+s\left[(\bm E^y)_{j,k+1}^{\rm S}-(\bm E^y)_{j,k}^{\rm N}\right],\quad s\in[0,1].
\end{equation*}
Following the technique introduced in \cite{Kurganov2023Well}, we compute
\begin{equation}
\begin{aligned}
&\bm Q^x_{j,k}\approx\bm F^x\big(\bm U_{j,k}^{\rm E}\big)-\bm F^x\big(\bm U_{j,k}^{\rm W}\big)-\hf\left[M^x\big(\bm U_{j,k}^{\rm E}\big)+
M^x\big(\bm U_{j,k}^{\rm W}\big)\right]\left((\bm E^x)_{j,k}^{\rm E}-(\bm E^x)_{j,k}^{\rm W}\right),\\
&\bm Q^y_{j,k}\approx\bm F^y\big(\bm U_{j,k}^{\rm N}\big)-\bm F^y\big(\bm U_{j,k}^{\rm S}\big)-\hf\left[M^y\big(\bm U_{j,k}^{\rm N}\big)+
M^y\big(\bm U_{j,k}^{\rm S}\big)\right]\left((\bm E^y)_{j,k}^{\rm N}-(\bm E^y)_{j,k}^{\rm S}\right),\\
&\bm Q^x_{\Psi,\jph,k}\approx\bm F^x\big(\bm U_{j+1,k}^{\rm W}\big)-\bm F^x\big(\bm U_{j,k}^{\rm E}\big)-
\hf\left[M^x\big(\bm U_{j+1,k}^{\rm W}\big)+M^x\big(\bm U_{j,k}^{\rm E}\big)\right]
\left((\bm E^x)_{j+1,k}^{\rm W}-(\bm E^x)_{j,k}^{\rm E}\right),\\
&\bm Q^y_{\Psi,j,\kph}\approx\bm F^y\big(\bm U_{j,k+1}^{\rm S}\big)-\bm F^y\big(\bm U_{j,k}^{\rm N}\big)-
\hf\left[M^y\big(\bm U_{j,k+1}^{\rm S}\big)+M^y\big(\bm U_{j,k}^{\rm N}\big)\right]
\left((\bm E^y)_{j,k+1}^{\rm S}-(\bm E^y)_{j,k}^{\rm N}\right),
\end{aligned}
\label{3.46}
\end{equation}
where $\bm F^x(\bm U)$ and $\bm F^y(\bm U)$ are defined in \eref{3.5}, and $M^x(\bm U)$ and $M^y(\bm U)$ are defined in \eref{3.15}.

\subsubsection{Well-Balanced Property}\label{sec323}
We now prove that the proposed 2-D scheme exactly preserves the families of quasi 1-D steady-states defined in \eref{3.12} and \eref{3.13}.
\begin{thm}\label{thm3.4}
The 2-D semi-discrete flux globalization based WB PCCU scheme is WB in the sense that it is capable of exactly preserving the families of
quasi 1-D steady states in \eref{3.12} and \eref{3.13}.
\end{thm}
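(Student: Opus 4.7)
The plan is to mirror the 1-D proof and exploit the fact that the families \eref{3.12} and \eref{3.13} are symmetric under exchange of the $x$ and $y$ roles; I therefore focus on \eref{3.12}. Assuming the cell-average data realizes \eref{3.12} discretely, I would first record that $v_{j,k}=0$, $b_{j,k}=0$, that $hu$, $ha$, and $E^x$ are constant in $j$, and that $E^y$ is constant in $k$. Because the generalized minmod limiter of Appendix \ref{appxA} is exact on constants and the modification \eref{3.29} preserves $(ha)_x\equiv0$ exactly, the reconstructed interface values $(hu)^{\rm E,W}_{j,k}$, $(E^x)^{\rm E,W}_{j,k}$, $(ha)^{\rm E,W}_{j,k}$ are independent of $j$, while $v^{\rm E,W}_{j,k}\equiv0$ and $b^{\rm E,W}_{j,k}\equiv0$; analogously $(hv)^{\rm N,S}_{j,k}\equiv0$, $(hb)^{\rm N,S}_{j,k}\equiv0$, and $(E^y)^{\rm N,S}_{j,k}$ is independent of $k$. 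For the depths, the cubic equations \eref{3.49} defining $\widehat h^{\,\rm E}_{j,k}$ and $\widehat h^{\,\rm W}_{j+1,k}$ coincide because their right-hand sides and the interface topography $Z_{\jph,k}$ agree at steady state, so (by the cubic solver's selection rule) $\widehat h^{\,\rm E}_{j,k}=\widehat h^{\,\rm W}_{j+1,k}$; multiplying by $v^{\rm E,W}\equiv b^{\rm E,W}\equiv0$ yields $\widehat{\bm W}^{\,\rm E}_{j,k}=\widehat{\bm W}^{\,\rm W}_{j+1,k}$, and the symmetric computation gives $\widehat{\bm W}^{\,\rm N}_{j,k}=\widehat{\bm W}^{\,\rm S}_{j,k+1}$, so the numerical-diffusion terms in \eref{3.17} vanish.

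It then remains to show $\bm K^x\big(\bm U^{\rm E}_{j,k}\big)=\bm K^x\big(\bm U^{\rm W}_{j+1,k}\big)$ and $\bm K^y\big(\bm U^{\rm N}_{j,k}\big)=\bm K^y\big(\bm U^{\rm S}_{j,k+1}\big)$ across every interface. The $x$-direction is routine: every component of $\bm E^x$ is constant in $j$, so reconstruction gives $(\bm E^x)^{\rm E}_{j,k}=(\bm E^x)^{\rm W}_{j,k}=(\bm E^x)^{\rm W}_{j+1,k}$, the path corrections in \eref{3.46} annihilate, $\bm Q^x_{j,k}$ and $\bm Q^x_{\bm\Psi,\jph,k}$ reduce to pure jumps of $\bm F^x$, and the recursion \eref{3.40} telescopes so that $\bm K^x$ is constant in $j$.

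The main obstacle, and the heart of the argument, is the $y$-direction: although $hv$, $E^y$, and $hb$ in $\bm E^y$ are constant in $k$, the reconstructed values of $u$ and $a$ generally differ across $y$-interfaces, so $(\bm E^y)^{\rm S}_{j,k+1}-(\bm E^y)^{\rm N}_{j,k}$ has nonzero second and fourth entries. The key observation, which I would verify by direct inspection of the matrix in \eref{3.15}, is that under $v\equiv0$ and $b\equiv0$ (hence $hv=hb=0$) the second and fourth columns of $M^y(\bm U)$ vanish identically. Consequently
\begin{equation*}
\tfrac12\left[M^y\big(\bm U^{\rm N}_{j,k}\big)+M^y\big(\bm U^{\rm S}_{j,k+1}\big)\right]\left((\bm E^y)^{\rm S}_{j,k+1}-(\bm E^y)^{\rm N}_{j,k}\right)=\bm0,
\end{equation*}
regardless of the mismatch in $u$ and $a$, and the analogous identity holds within each cell. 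The formulae \eref{3.46} therefore again collapse to pure jumps of $\bm F^y$, the recursion \eref{3.40} telescopes, and $\bm K^y$ is constant in $k$. Combined with the vanishing of the diffusion terms, this yields $\bm{{\cal H}}^x_{\jph,k}=\bm{{\cal H}}^x_{\jmh,k}$ and $\bm{{\cal H}}^y_{j,\kph}=\bm{{\cal H}}^y_{j,\kmh}$ for every $(j,k)$, and hence $\tfrac{\rm d}{{\rm d}t}\xbar{\bm W}_{j,k}=\bm0$. For the family \eref{3.13}, the identities $u\equiv0$ and $a\equiv0$ annihilate the second and fourth columns of $M^x$ in \eref{3.15}, and the symmetric argument carries through with the roles of $x$ and $y$ interchanged.
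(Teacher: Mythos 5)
Your proof is correct and follows essentially the same route as the paper's: reduce well-balancing to showing that $\bm K^x$ is constant across $x$-interfaces and $\bm K^y$ across $y$-interfaces via the telescoping of \eref{3.40} with the path corrections \eref{3.46}, plus the vanishing of the diffusion terms. The only difference is one of exposition: you make explicit the key observation that the second and fourth columns of $M^y$ (resp.\ $M^x$) vanish when $v=b\equiv0$ (resp.\ $u=a\equiv0$), which is precisely why the mismatch in the reconstructed $u$ and $a$ is harmless; the paper subsumes this under ``due to the assumptions in \eref{3.29a}'' and delegates the diffusion-term argument to \S\ref{sec321}.
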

\begin{proof}
Since the proofs of preserving the quasi 1-D steady states in \eref{3.12} and \eref{3.13} are very similar, we only show that the proposed
scheme exactly preserves \eref{3.12}.

Assume that at a certain time level, we have a discrete steady-state solution that satisfies \eref{3.12}, namely:
\begin{equation}
\begin{aligned}
&(hu)_{j,k}^{\rm E,W}=(\xbar{hu})_{j,k}\equiv\big((hu)_{\rm eq}\big)_k,\quad
(ha)_{j,k}^{\rm E,W}=(\xbar{ha})_{j,k}\equiv\big((ha)_{\rm eq}\big)_k,\quad v_{j,k}^{\rm E,W,N,S}=v_{j,k}\equiv0,\\
&b_{j,k}^{\rm E,W,N,S}=b_{j,k}\equiv0,\quad(E^x)_{j,k}^{\rm E,W}=E^x_{j,k}\equiv(E^x_{\rm eq})_k,\quad
(E^y)_{j,k}^{\rm N,S}=E^y_{j,k}\equiv(E^y_{\rm eq})_j,
\end{aligned}
\label{3.29a}
\end{equation}
where the quantities $\big((hu)_{\rm eq}\big)_k$, $\big((ha)_{\rm eq}\big)_k$, and $(E^x_{\rm eq})_k$ are all constants along the
$x$-direction, and $(E^y_{\rm eq})_j$ is constant along the $y$-direction. In order to show that the discrete steady states \eref{3.29a} are
preserved exactly, we need to show that
\begin{equation*}
(\bm K^x)_{j+1,k}^{\rm W}=(\bm K^x)_{j,k}^{\rm E}=(\bm K^x)_{j,k}^{\rm W}\quad\mbox{and}\quad 
(\bm K^y)_{j,k+1}^{\rm S}=(\bm K^y)_{j,k}^{\rm N}=(\bm K^y)_{j,k}^{\rm S}
\end{equation*}
for all $j,k$. To this end, we first use \eref{3.39} to obtain that the equalities $(\bm K^x)_{j+1,k}^{\rm W}=(\bm K^x)_{j,k}^{\rm E}$ and
$(\bm K^y)_{j,k+1}^{\rm S}=(\bm K^y)_{j,k}^{\rm N}$ are equivalent to
\begin{equation*}
\begin{aligned}
\bm F^x\big(\bm U_{j+1,k}^{\rm W}\big)-(\bm R^x)_{j+1,k}^{\rm W}=\bm F^x\big(\bm U_{j,k}^{\rm E}\big)-(\bm R^x)_{j,k}^{\rm E}~\,\mbox{and}~
\bm F^y\big(\bm U_{j,k+1}^{\rm S}\big)-(\bm R^y)_{j,k+1}^{\rm S}=\bm F^y\big(\bm U_{j,k}^{\rm N}\big)-(\bm R^y)_{j,k}^{\rm N},
\end{aligned}
\end{equation*}
respectively. The last two equalities can be rewritten using the definition of $(\bm R^x)_{j,k}^{\rm E,W}$ and $(\bm R^y)_{j,k}^{\rm N,S}$
in \eref{3.40}, \eref{3.46} as
\begin{align*}
&\bm F^x\big(\bm U_{j+1,k}^{\rm W}\big)-\bm F^x\big(\bm U_{j,k}^{\rm E}\big)-\bm Q^x_{\bm\Psi,\jph,k}=
\hf\left[M^x(\bm U_{j+1,k}^{\rm W})+M^x(\bm U_{j,k}^{\rm E})\right]\left((\bm E^x)_{j+1,k}^{\rm W}-(\bm E^{x})_{j,k}^{\rm E}\right)=\bm0,\\
&\bm F^y\big(\bm U_{j,k+1}^{\rm S}\big)-\bm F^y\big(\bm U_{j,k}^{\rm N}\big)-\bm Q^y_{\bm\Psi,j,\kph}=
\hf\left[M^y(\bm U_{j,k+1}^{\rm S})+M^y(\bm U_{j,k}^{\rm N})\right]\left((\bm E^y)_{j,k+1}^{\rm S}-(\bm E^{y})_{j,k}^{\rm N}\right)=\bm0,
\end{align*}
which hold due to the assumptions in \eref{3.29a}. Then, one can similarly proceed to obtain
\begin{align*}
&\bm F^x\big(\bm U_{j,k}^{\rm E}\big)-\bm F^x\big(\bm U_{j,k}^{\rm W}\big)-\bm Q^x_{j,k}=
\hf\left[M^x(\bm U_{j,k}^{\rm E})+M^x(\bm U_{j,k}^{\rm W})\right]\left((\bm E^x)_{j,k}^{\rm E}-(\bm E^x)_{j,k}^{\rm W}\right)=\bm0,\\
&\bm F^y\big(\bm U_{j,k}^{\rm N}\big)-\bm F^y\big(\bm U_{j,k}^{\rm S}\big)-\bm Q^y_{j,k}=
\hf\left[M^y(\bm U_{j,k}^{\rm N})+M^y(\bm U_{j,k}^{\rm S})\right]\left((\bm E^y)_{j,k}^{\rm N}-(\bm E^y)_{j,k}^{\rm S}\right)=\bm0,
\end{align*}
which are also true due to the assumptions in \eref{3.29a}. This concludes the proof of the theorem.
\end{proof}

\section{Numerical Examples}\label{sec4}
In this section, we demonstrate the performance of the proposed flux globalization based WB PCCU schemes in a number of numerical
experiments. In all of the examples, we set the minmod parameter $\Theta=1.3$, take $g=1$, and evolve the solution in time using the
explicit three-stage third-order SSP RK method (see, e.g., \cite{Gottlieb2001Strong, Gottlieb2011Strong}) with the variable time step 
selected using the CFL number 0.25.

In several examples, we compare the performance of the proposed WB schemes with ``non-well-balanced'' (NWB) ones, which are obtained by
replacing the WB generalized minmod reconstruction of the equilibrium variables with the same generalized minmod reconstruction but applied
to the conservative variables. Below, we refer to the studied schemes as the WB and NWB schemes.

\subsection{1-D Numerical Examples}\label{sec41}
\subsubsection{General Facts About the 1-D MRSW Model}\label{sec411}
We first recall some important facts on the properties of the system \eref{2.1}, which are necessary for understanding and interpreting
the obtained numerical results. The first point is that, as already mentioned, resolving the zero-divergence constraint for the magnetic
field is straightforward in the 1-D case, see \eref{2.2} where the constant has a meaning of the mean meridional magnetic field
$\mathfrak{B}$ multiplied by the mean thickness $\mathfrak{H}$. This allows to eliminate the dependent variable $b$ in favor of the variable
$h$ and thus reduces the model to a system of four equations for the variables $u$, $v$, $h$, and $a$. This system is equivalent to a
quasi-linear hyperbolic system for these variables. It can be shown to possess four characteristics: two of them corresponding to
magneto-inertia-gravity waves propagating in the positive and negative directions along the $y$-axis, and another two corresponding to
rotation-modified Alfv\'en waves propagating in the positive and negative directions as well. One can straightforwardly linearize the
resulting 4 by 4 system to see that harmonic waves arise as solutions: high-frequency (fast) magneto-inertia-gravity waves and
low-frequency (slow) rotation-modified Alfv\'en waves; see, e.g., \cite{zeitlin2015geostrophic}. Therefore, any localized perturbation of
the steady state is a source of these waves, propagating out of it towards the domain boundaries. Note that in a particular case
$\mathfrak{B}=0$, the system becomes a 1-D RSW equation with a passively advected field $a$, and the Alfv\'en waves, which can propagate
only on the background of a magnetic field, disappear.

The second point is that the conservation laws of the system \eref{2.1} can be combined to give another one, the energy conservation
(which is obviously valid for smooth solutions of \eref{2.1} only):
\begin{equation}
\hspace*{-0.35cm}\left[h\frac{u^2+v^2+a^2+b^2}{2}+gh\Big(\frac{h}{2}+Z\Big)\right]_t+\left[hv\frac{u^2+v^2+a^2+b^2}{2}+g(h+Z)-hb(au+bv)\right]_y=0
\label{energy1}
\end{equation} 
In fact, the system \eref{1.1} is Hamiltonian (see, e.g., \cite{Dellar2002Hamiltonian}), with the Hamiltonian density given by the
expression in the brackets in the first term on the left-hand side (LHS) of \eref{energy1}. As is well-known, stationary solutions of the
Hamiltonian systems, which are given by \eref{2.11}, \eref{2.16} in the present case, are local minima of the Hamiltonian (energy). So, if
the system is close to one of them, it engages in a relaxation (adjustment) process consisting of shedding an excess of energy and arriving
at the minimum of energy. In non-dissipative systems, wave emission is the only way to evacuate energy (although numerical dissipation
adds up in simulations).

The third point is that among the aforementioned steady states, there are those of particular importance in geophysical and
astrophysical applications---the so-called geostrophic equilibria, that is, the equilibria between the Coriolis and the pressure forces in
the absence of a magnetic field:
\begin{equation}
fu=-gh_y,~~v=0,~~a=0,~~b=0.
\label{geostrophy}
\end{equation}
One can show (see, e.g., \cite{zeitlin2018geophysical}) that for the RSW system, any nontrivial initial condition will evolve towards the
state of the corresponding geostrophic equilibrium by emission of inertia-gravity waves: this is a geostrophic adjustment process. In the
presence of a meridional magnetic field $b\ne0$, the first equation in \eref{geostrophy} is modified to
\begin{equation}
fu=-gh_y+bb_y,
\label{4.5}
\end{equation}
which corresponds to magneto-geostrophic equilibrium between the Coriolis, pressure, and magnetic pressure forces. The magneto-geostrophic
the adjustment process is, however, more complicated than the geostrophic one as the zonal component of the magnetic field $a$ is not constant
at the steady state; see \eref{2.16}.

We would like to recall that the dynamical regimes close to (magneto-)geostrophic equilibrium are characterized by small Rossby and magnetic
Rossby numbers, $Ro$ and $Ro_m$ defined as
$$
Ro=\frac{\mathfrak U}{f\mathfrak L},\quad Ro_m=\frac{\mathfrak B}{f\mathfrak L},
$$
where $\mathfrak U$ and $\mathfrak B$ are typical values of velocity and magnetic field, respectively, and $\mathfrak L$ is a typical scale
of the motions under study. As was shown in \cite{zeitlin2015geostrophic}, at small $Ro\sim Ro_m$, the magneto-geostrophic adjustment of a
localized initial perturbation consists in a rapid evacuation of fast magneto-inertia-gravity waves out of the perturbation location and
slow emission of rotation-modified Alfv\'en waves. Due to the fact that the group velocities of both types of waves tend to zero with
increasing wavelength, a part of initial perturbation persists for a long time, subject to inertial oscillations, like in the case of the
standard RSW equations \cite{zeitlin2003rsw}.

\subsubsection{Details of Numerical Implementation}
In Examples 1 and 2, we use outflow boundary conditions. This is achieved by using an extrapolation of the equilibrium variables. For those
of them that are constant at steady states ($hv$, $E$, and $hb$), we use the zero-order extrapolation, that is, we set
\begin{equation*}
(\xbar{hv})_0:=(\xbar{hv})_1,~E_0:=E_1,~(\xbar{hb})_0:=(\xbar{hb})_1,~(\xbar{hv})_{N+1}:=(\xbar{hv})_N,~E_{N+1}:=E_N,~
(\xbar{hb})_{N+1}:=(\xbar{hb})_N.
\end{equation*}
For $u$ and $a$, whose profiles at steady states are either linear or quadratic, we use \eref{2.16} to obtain
\begin{equation*}
\begin{aligned}
&u_0:=u(y_0)=\frac{(\xbar{hv})_0^2}{(\xbar{hv})_0^2-(\xbar{hb})_0^2}\left(f_cy_0+\frac{\beta}{2}y_0^2\right)+u_c,\\
&a_0:=a(y_0)=\frac{(\xbar{hv})_0(\xbar{hb})_0}{\xbar{(hv})_0^2-(\xbar{hb})_0^2}\left(f_cy_0+\frac{\beta}{2}y_0^2\right)+a_c,\\
&u_{N+1}:=u(y_{N+1})=\frac{(\xbar{hv})_{N+1}^2}{(\xbar{hv})_{N+1}^2-(\xbar{hb})_{N+1}^2}\left(f_cy_{N+1}+\frac{\beta}{2}y_{N+1}^2\right)+
u_c,\\
&a_{N+1}:=a(y_{N+1})=\frac{(\xbar{hv})_{N+1}(\xbar{hb})_{N+1}}{(\xbar{hv})_{N+1}^2-(\xbar{hb})_{N+1}^2}\left(f_cy_{N+1}+
\frac{\beta}{2}y_{N+1}^2\right)+a_c.
\end{aligned}
\end{equation*}
We then use \eref{2.19f} and obtain the corresponding values of $h$ by solving the following cubic equations:
$$
\frac{(\xbar{hv})_0^2-(\xbar{hb})_0^2}{2\,\xbar h_0^{\,2}}+g\left(\xbar h_0+Z_0\right)+P_0=E_0,~~
\frac{(\xbar{hv})_{N+1}^2-(\xbar{hb})_{N+1}^2}{2\,\xbar h_{N+1}^{\,2}}+g\left(\xbar h_{N+1}+Z_{N+1}\right)+P_{N+1}=E_{N+1}
$$
for $h_0$ and $h_{N+1}$, respectively. Here, $P_0$ and $P_{N+1}$ are obtained using a straightforward boundary extension of \eref{2.20f},
namely:
$$
P_0=P_1-\frac{\dy}{2}\big(f_0u_0+f_1u_1\big),\quad P_{N+1}=P_N+\frac{\dy}{2}\big(f_Nu_N+f_{N+1}u_{N+1}\big).
$$
Lastly, the boundary conditions for $hu$ and $ha$ are
$$
(\xbar{hu})_0=\,\xbar h_0u_0,\quad(\xbar{ha})_0=\,\xbar h_0a_0,\quad(\xbar{hu})_{N+1}=\,\xbar h_{N+1}u_{N+1},\quad
(\xbar{ha})_{N+1}=\,\xbar h_{N+1}a_{N+1}.
$$

\subsubsection*{Example 1---Steady-State with Constant Coriolis Parameter ($f(y)\equiv1$)}
In this example, we first demonstrate that the proposed 1-D method exactly preserves moving-water equilibria. To this end, we use the
following initial conditions that satisfy \eref{2.11}, \eref{2.16}:
\begin{equation*}
\begin{aligned}
&(hv)(y,0)=(hv)_{\rm eq}(y)\equiv0.5,\quad E(y,0)=E_{\rm eq}(y)\equiv1,\quad(hb)(y,0)=(hb)_{\rm eq}(y)\equiv3,\\
&u(y,0)=u_{\rm eq}(y)=-\frac{1}{35}y+0.3,\quad a(y,0)=a_{\rm eq}(y)=-\frac{6}{35}y+2,
\end{aligned}
\end{equation*}
and the bottom topography $Z(y)=\hf e^{-y^2}$.

However, the $h_{\rm eq}(y)$ profile can only be computed on the discrete level. Therefore, we take the computational domain $[-10,10]$,
set the outflow boundary conditions, and compute the discrete cell averages $(\xbar{h_{\rm eq}})_k$ by solving the following cubic equations
(see \eref{2.19f}):
\begin{equation}
-\frac{35}{8\big((\xbar{h_{\rm eq}}\big)_k)^2}+g\left((\xbar{h_{\rm eq}})_k+Z(y_k)\right)+P_k=1,
\label{4.3f}
\end{equation}
where $P_k$ are computed using \eref{2.20f} with $u_k=u_{\rm eq}(y_k)$. The obtained discrete profile of $h_{\rm eq}(y)$ is plotted in
Figure \ref{fig41} (left).
\begin{figure}[ht!]
\centerline{\includegraphics[trim=0.6cm 0.6cm 1.0cm 0.9cm, clip, width=5.0cm]{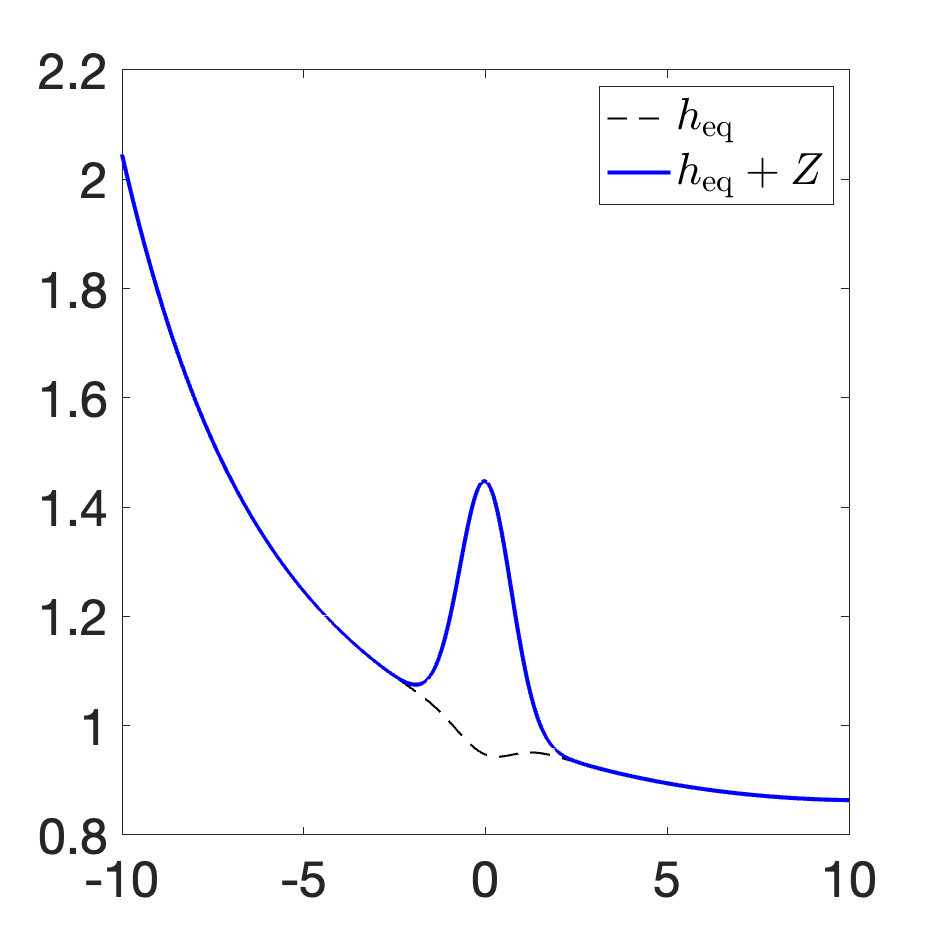}\hspace*{1cm}
            \includegraphics[trim=0.6cm 0.6cm 1.0cm 0.9cm, clip, width=5.0cm]{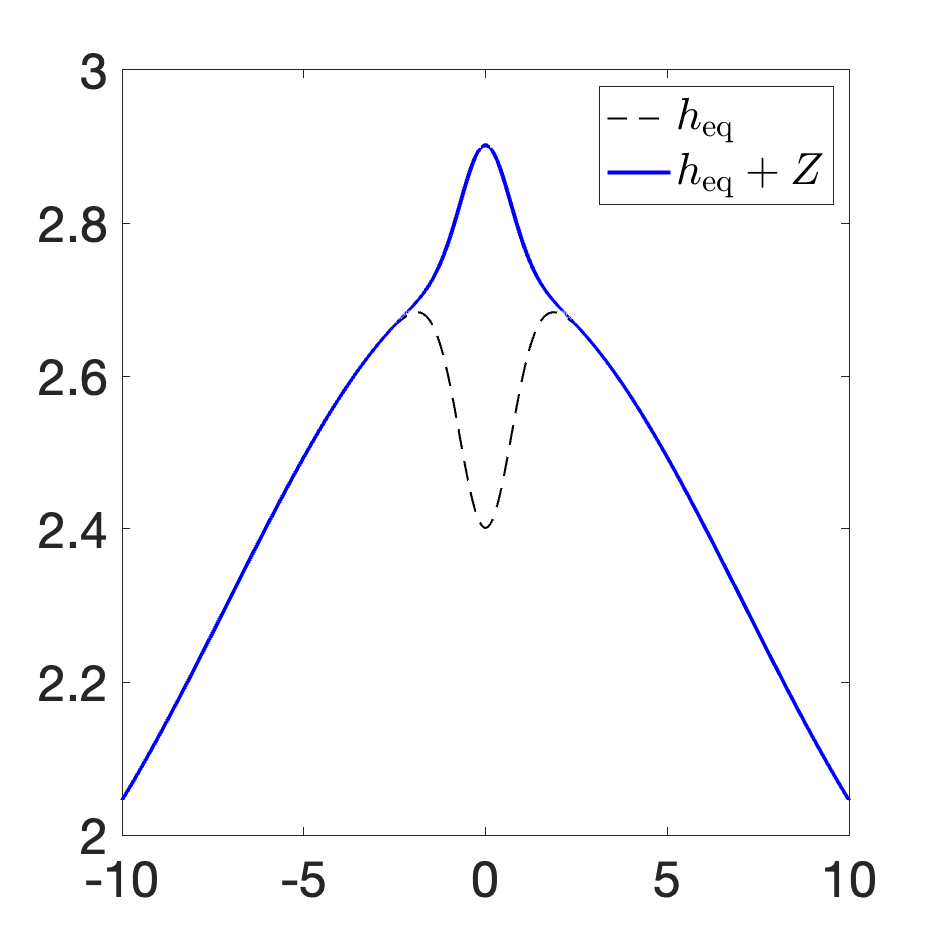}}
\caption{\sf The steady-state fluid depths $h_{\rm eq}(y)$ and fluid levels $h_{\rm eq}(y)+Z(y)$ in Examples 1 (left) and 2 (right).
\label{fig41}}
\end{figure}

We compute the numerical solutions on a uniform mesh with $N=100$ by the WB and NWB schemes until the final time $t=5$. The results
reported in Table \ref{tab41}, show that the WB scheme, as expected, preserves the steady state within the machine accuracy, while the NWB
scheme fails to do so.
\begin{table}[ht!]
\begin{center}
\begin{tabular}{|c|c|c|c|c|}
\hline
Scheme&$\|h(\cdot,5)-h_{\rm eq}\|_\infty$&$\|u(\cdot,5)-u_{\rm eq}\|_\infty$&$\|v(\cdot,5)-v_{\rm eq}\|_\infty$&
$\|a(\cdot,5)-a_{\rm eq}\|_\infty$\\ \hline
WB &1.33e-15&3.22e-15&7.55e-15&4.44e-15\\
NWB&2.18e-03&1.86e-03&1.40e-03&3.97e-03\\
\hline
\end{tabular}
\end{center}
\caption{\sf Example 1 (capturing the steady state): Errors for the WB and NWB schemes.\label{tab41}}
\end{table}

Next, we examine the ability of the proposed WB scheme to correctly capture the evolution of a small perturbation of the studied steady
state. This is done by perturbing the discrete equilibrium fluid depth. Namely, we take the following initial data for $h$:
\begin{equation*}
\xbar h_k=(\xbar{h_{\rm eq}})_k+\begin{cases}10^{-3}&\mbox{if }|y_k+2|<\frac{1}{4},\\0&\textrm{otherwise}.\end{cases}
\end{equation*}

We compute the numerical solutions by both the WB and NWB schemes until the final time $t=1$ on a sequence of meshes with $N=100$, 1000, and
10000 cells. The obtained differences $h(y,1)-h_{\rm eq}(y)$ are plotted in Figure \ref{fig42}, where one can see that the NWB scheme fails
to capture the correct solution on a coarse mesh with $N=100$ and even when the mesh is refined the NWB solution contains visible
oscillations. At the same time, the WB solution is oscillation-free, even on a coarse mesh.
\begin{figure}[ht!]
\centerline{\includegraphics[trim=0.6cm 0.8cm 1.1cm 0.2cm, clip, width=5.7cm]{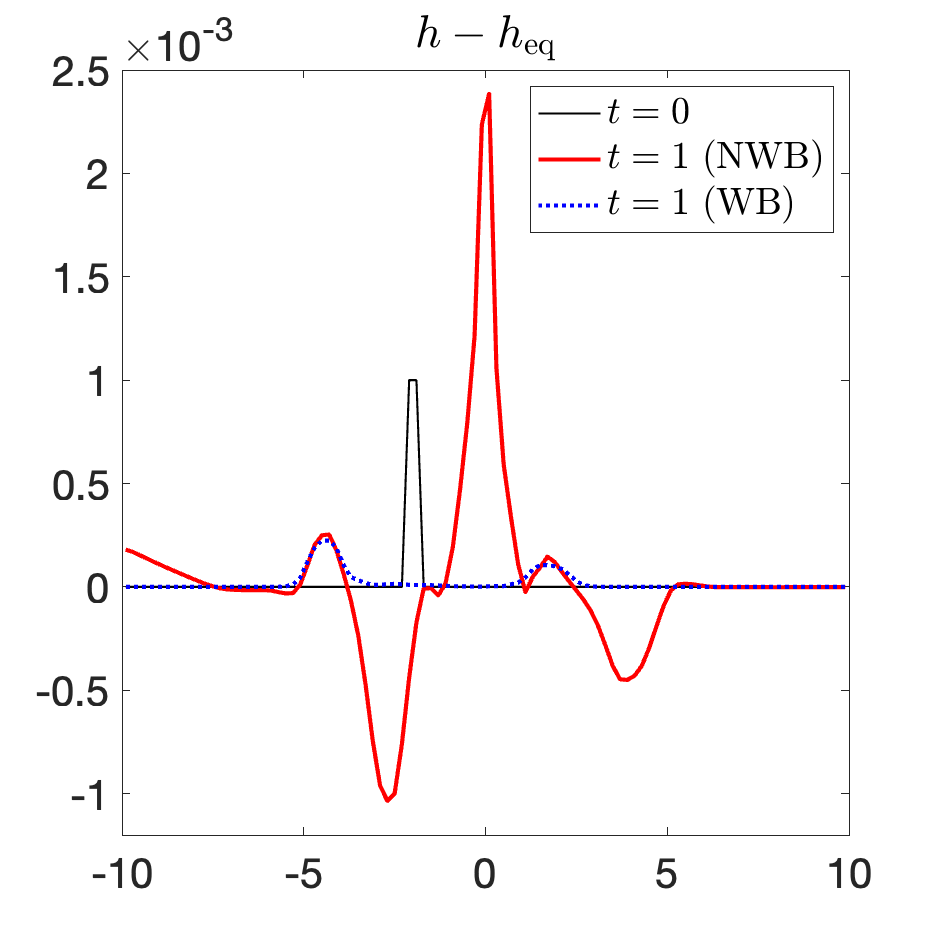}
            \includegraphics[trim=0.6cm 0.8cm 1.1cm 0.2cm, clip, width=5.7cm]{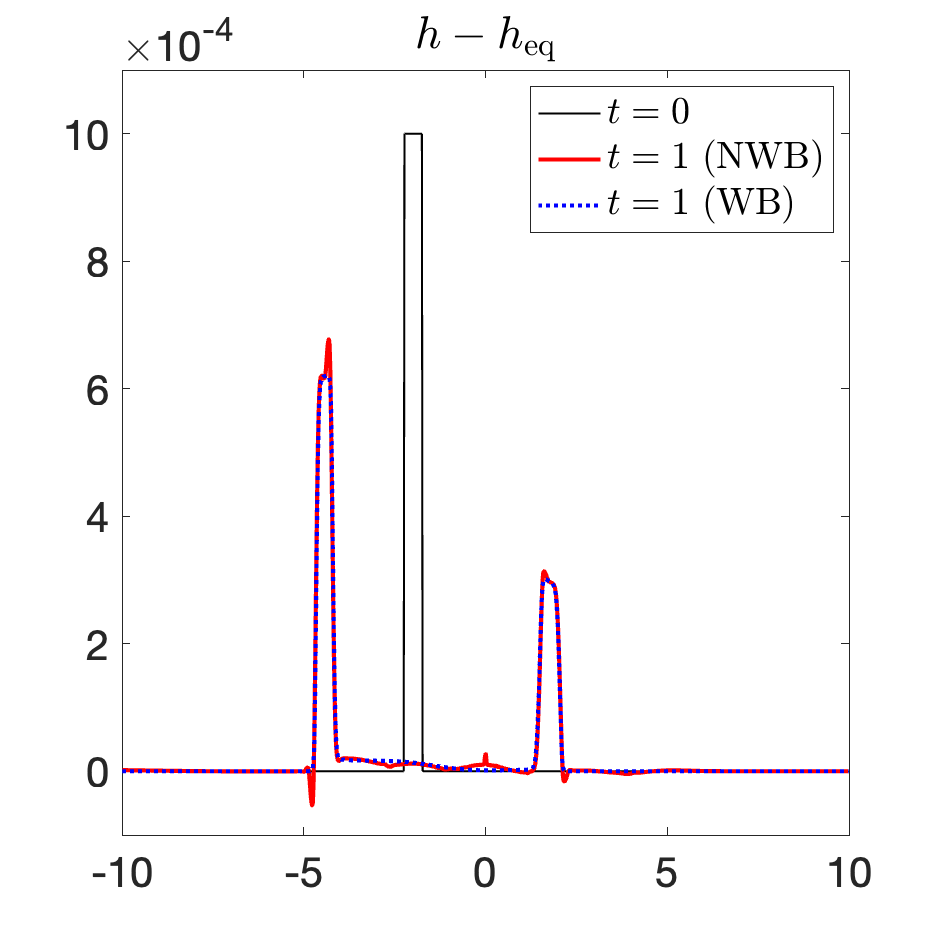}
            \includegraphics[trim=0.6cm 0.8cm 1.1cm 0.2cm, clip, width=5.7cm]{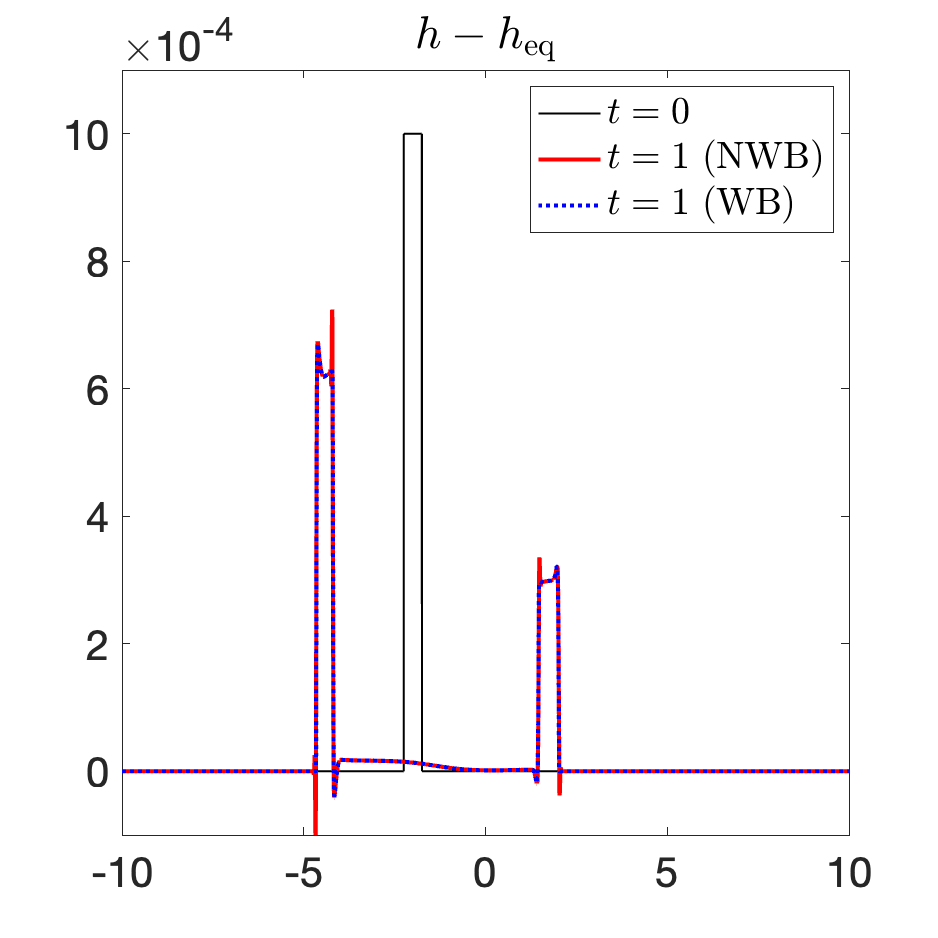}}
\caption{\sf Example 1 (small perturbation of the steady state): $h(y,1)-h_{\rm eq}(y)$ computed by both the WB and NWB schemes with $N=100$
(left), 1000 (middle) and 10000 (right) uniform cells. Notice the difference in vertical scales between the first and other panels in this
and the following figures.\label{fig42}}
\end{figure}

It is also instructive to see the computed differences $u(y,1)-u_{\rm eq}(y)$ and $a(y,1)-a_{\rm eq}(y)$; see Figure \ref{fig43}. They
clearly show that the initial perturbation results in two wave packets propagating to the right and the left of its location. The phase
relations between $u(y,1)-u_{\rm eq}(y)$ and $a(y,1)-a_{\rm eq}(y)$ in these waves (same phase for the left-moving, and opposite phases for
the right-moving waves) match the corresponding phase relations of linear waves on the background of the magnetic field, which can be
straightforwardly deduced from the linearized equations (see \cite{zeitlin2015geostrophic}) with the linearization being justified by the
smallness of the perturbation. We also see that, unlike the NWB scheme, the WB one captures the waves properly, even at the lowest resolution.
\begin{figure}[ht!]
\centerline{\includegraphics[trim=0.6cm 0.8cm 1.1cm 0.2cm, clip, width=5.7cm]{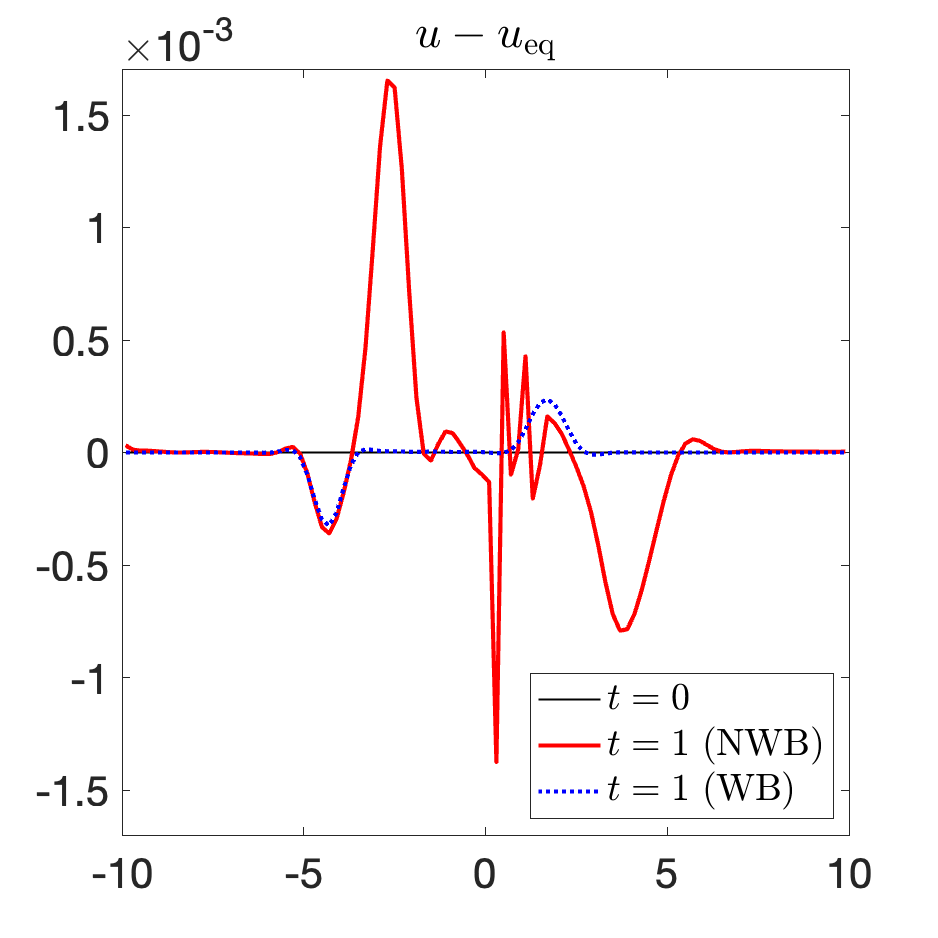}
            \includegraphics[trim=0.6cm 0.8cm 1.1cm 0.2cm, clip, width=5.7cm]{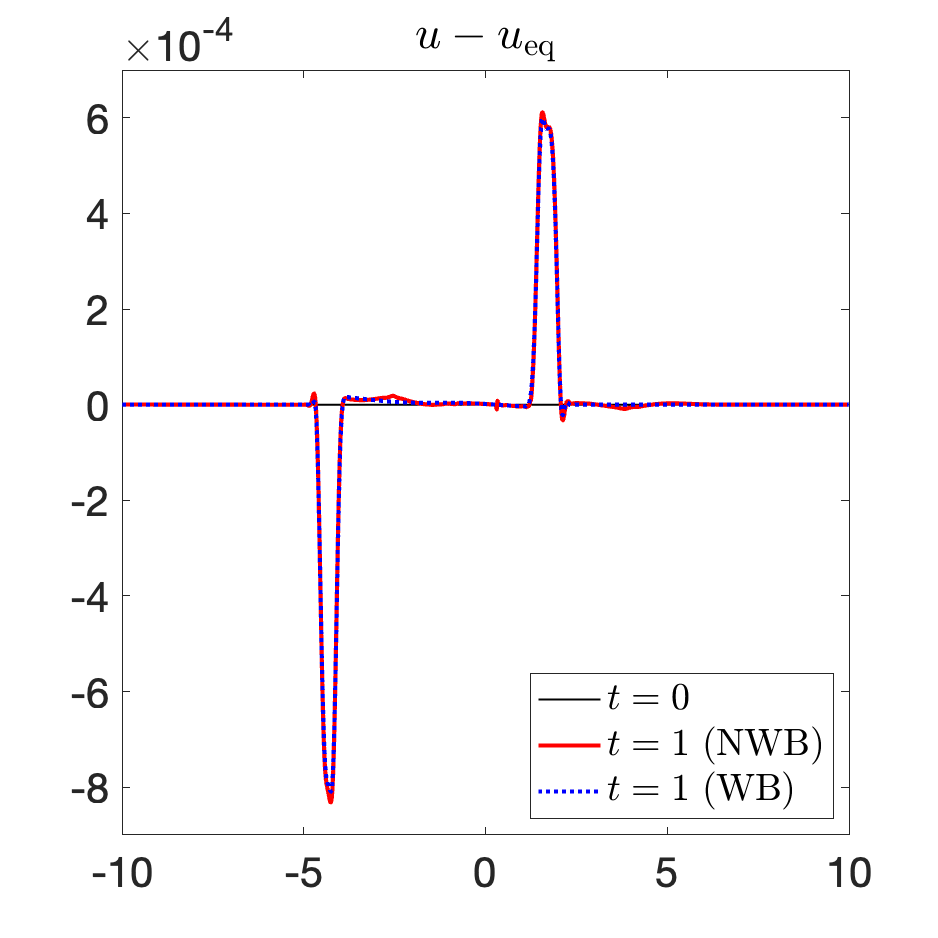}
            \includegraphics[trim=0.6cm 0.8cm 1.1cm 0.2cm, clip, width=5.7cm]{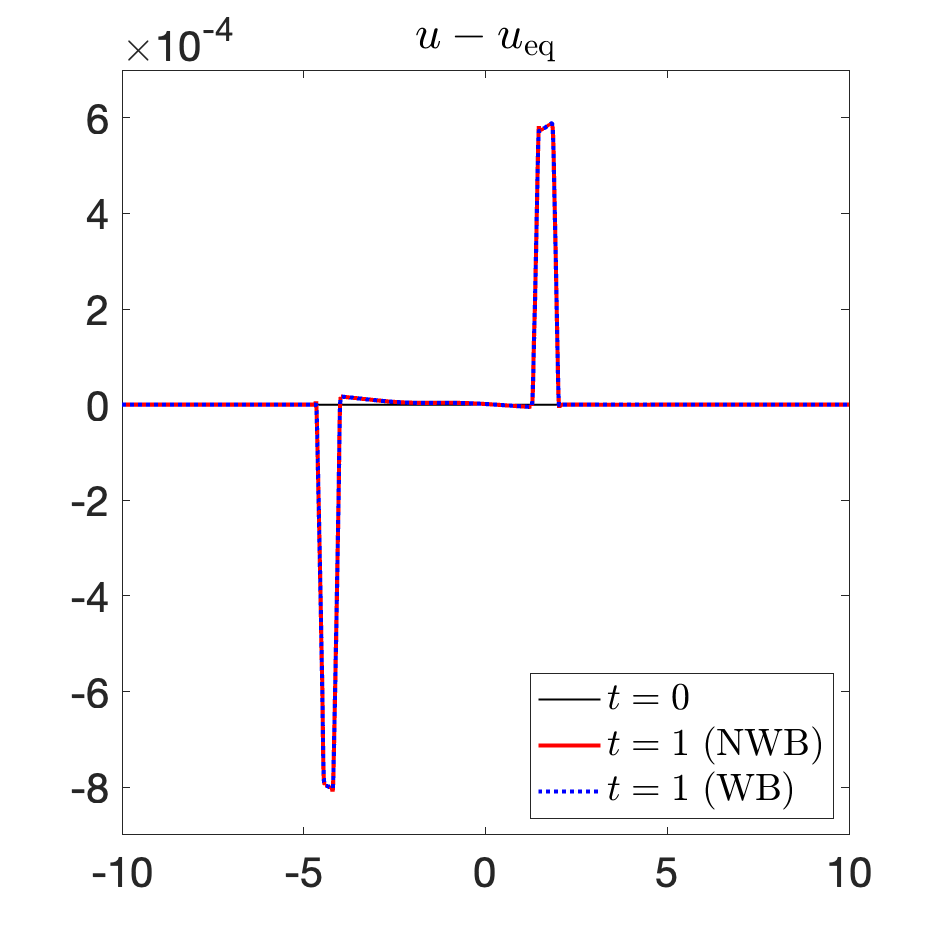}}
\vskip7pt
\centerline{\includegraphics[trim=0.6cm 0.8cm 1.1cm 0.2cm, clip, width=5.7cm]{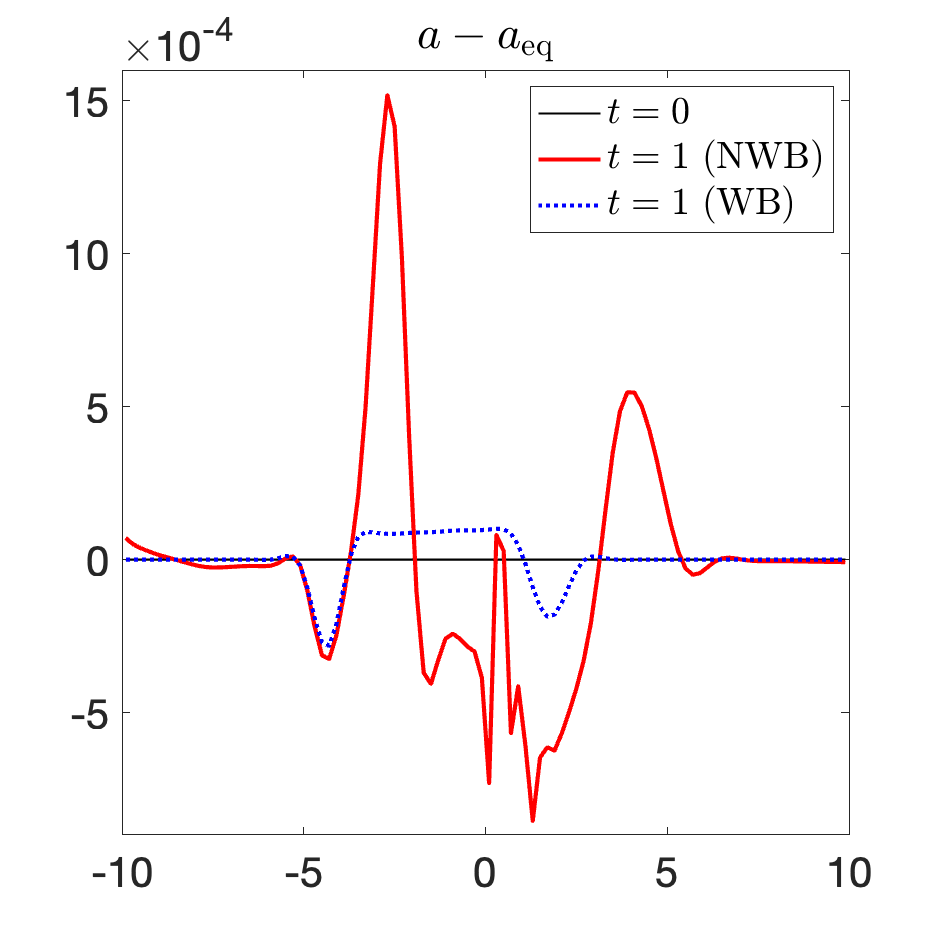}
            \includegraphics[trim=0.6cm 0.8cm 1.1cm 0.2cm, clip, width=5.7cm]{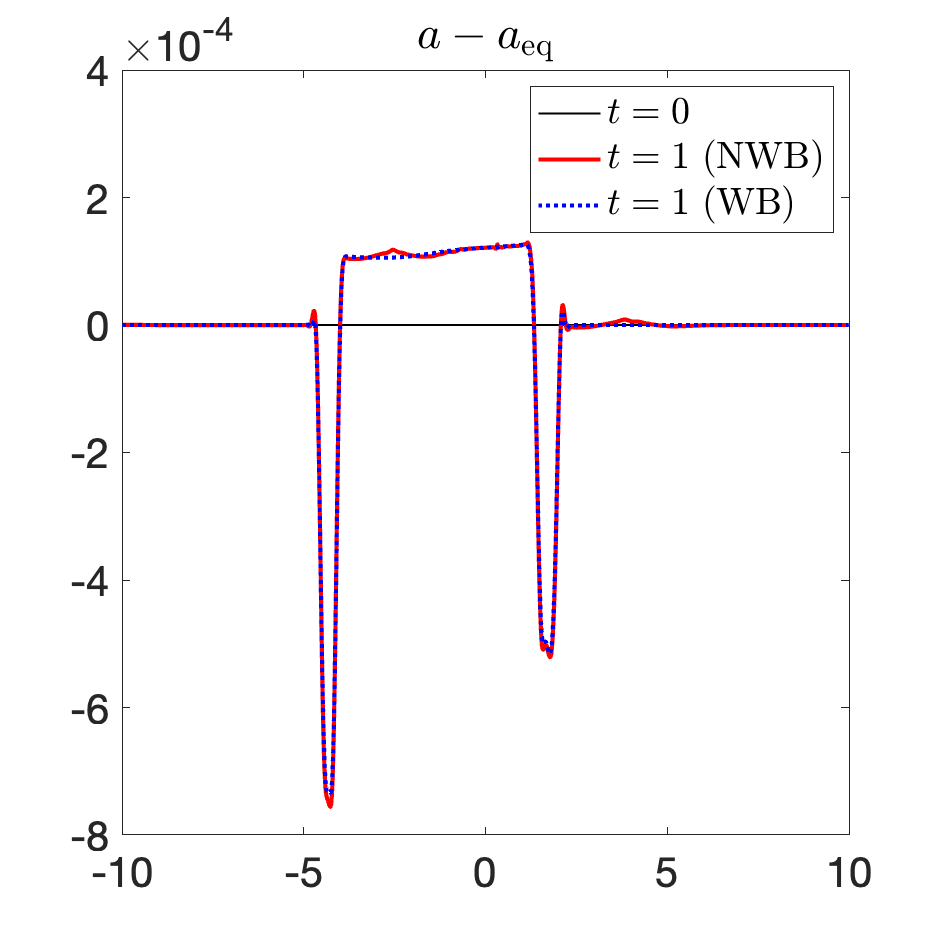}
            \includegraphics[trim=0.6cm 0.8cm 1.1cm 0.2cm, clip, width=5.7cm]{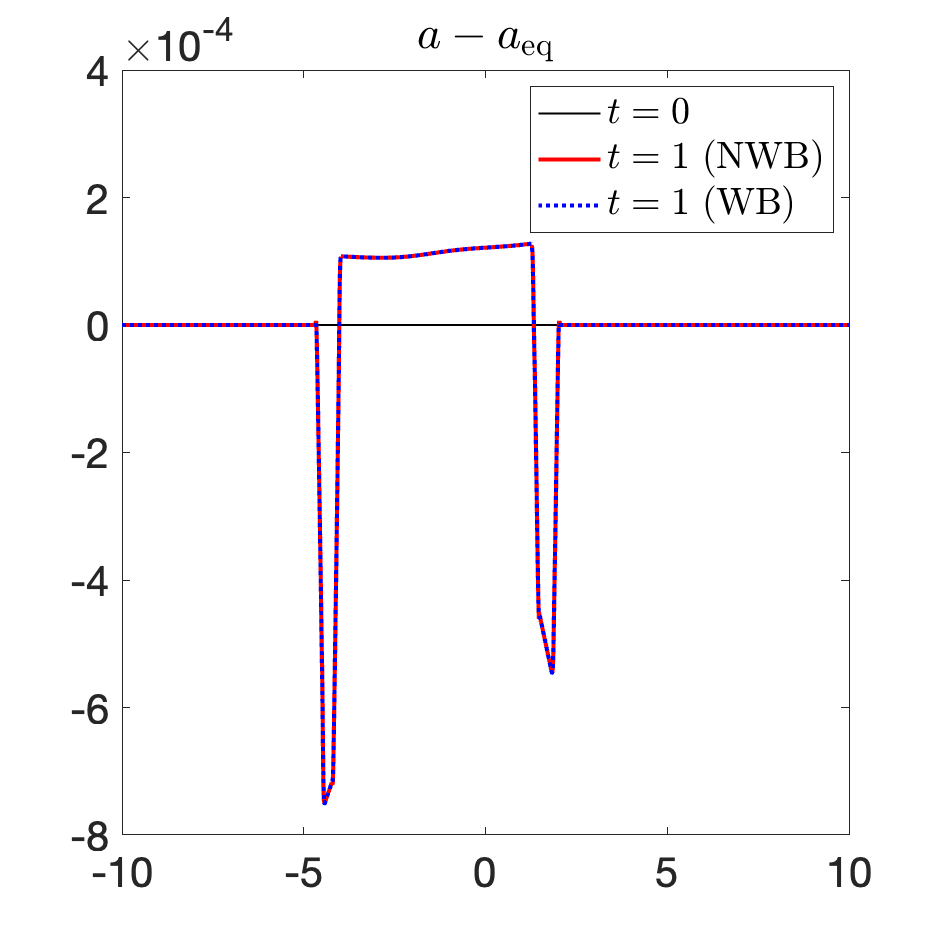}}
\caption{\sf Example 1 (small perturbation of the steady state): $u(y,1)-u_{\rm eq}(y)$ (top row) and $a(y,1)-a_{\rm eq}(y)$ (bottom row)
computed by both the WB and NWB schemes with $N=100$ (left), 1000 (middle), and 10000 (right) uniform cells.\label{fig43}}
\end{figure}

\subsubsection*{Example 2---Steady-State with Linear Coriolis Parameter $(f(y)=0.1y)$}
In the second example, we demonstrate that the proposed 1-D method exactly preserves moving-water equilibria in the so-called equatorial
beta-plane approximation of the Coriolis parameter $f$. (The axis of rotation is parallel to the tangent plane at the equator; thus, the
constant part of $f$ is identically zero.) To this end, we use the following initial conditions that satisfy \eref{2.11}, \eref{2.16}:
\begin{equation*}
\begin{aligned}
&(hv)(y,0)=(hv)_{\rm eq}(y)\equiv0.5,\quad E(y,0)=E_{\rm eq}(y)\equiv1,\quad(hb)(y,0)=(hb)_{\rm eq}(y)\equiv3,\\
&u(y,0)=u_{\rm eq}(y)=-\frac{1}{700}y^2+0.3,\quad a(y,0)=a_{\rm eq}(y)=-\frac{3}{350}y^2+2,
\end{aligned}
\end{equation*}
and the bottom topography $Z(y)=\hf e^{-y^2}$. The computational domain is $[-10,10]$ and the discrete profile of $h_{\rm eq}(y)$, which is
plotted in Figure \ref{fig41} (right), is obtained precisely as in Example 1 by solving the cubic equation \eref{4.3f}.

We compute the numerical solutions on a uniform mesh with $N=100$ by the WB and NWB schemes until the final time $t=5$. The results
reported in Table \ref{tab42}, show that the WB scheme, as expected, preserves the steady state within the machine accuracy, while the NWB
scheme fails to do so.
\begin{table}[ht!]
\begin{center}
\begin{tabular}{|c|c|c|c|c|}
\hline
Scheme&$\|h(\cdot,5)-h_{\rm eq}\|_\infty$&$\|u(\cdot,5)-u_{\rm eq}\|_\infty$&$\|v(\cdot,5)-v_{\rm eq}\|_\infty$&
$\|a(\cdot,5)-a_{\rm eq}\|_\infty$\\ \hline
WB &1.33e-15&2.11e-15&1.08e-15&1.55e-15\\
NWB&1.71e-03&1.79e-03&4.81e-03&1.36e-03\\
\hline
\end{tabular}
\end{center}
\caption{\sf Example 2 (capturing the steady state): Errors for the WB and NWB schemes.\label{tab42}}
\end{table}

Next, we examine the ability of the proposed WB scheme to capture a small perturbation of the studied steady state accurately. We use
precisely the same perturbed initial $h$ as in Example 1 and compute the numerical solutions by both the WB and NWB schemes until the final
time $t=1$ on a sequence of meshes with $N=100$, 1000, and 10000 cells. The obtained differences $h(y,1)-h_{\rm eq}(y)$,
$u(y,1)-u_{\rm eq}(y)$, and $a(y,1)-a_{\rm eq}(y)$ are plotted in Figure \ref{fig45}, where one can see that the NWB scheme fails to capture
the correct solution on a coarse mesh with $N=100$ and even when $N=1000$, the NWB solution is still very oscillatory. At the same time, the
WB solutions are oscillation-free. Notice that finding linear wave solutions with the meridional (poloidal) magnetic field
and with $f(y)\sim y$ is a nontrivial task, so we do not have here readily available theoretical predictions of the properties of such
waves.
\begin{figure}[ht!]
\centerline{\includegraphics[trim=0.6cm 0.8cm 1.1cm 0.2cm, clip, width=5.7cm]{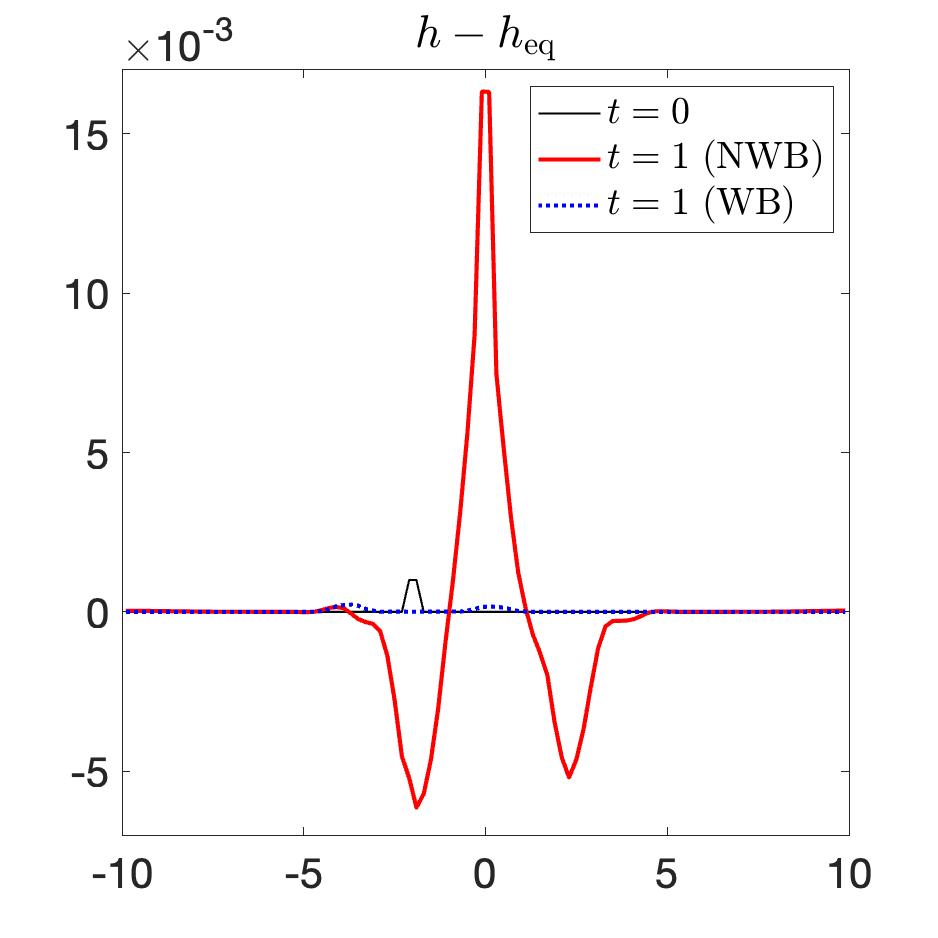}
            \includegraphics[trim=0.6cm 0.8cm 1.1cm 0.2cm, clip, width=5.7cm]{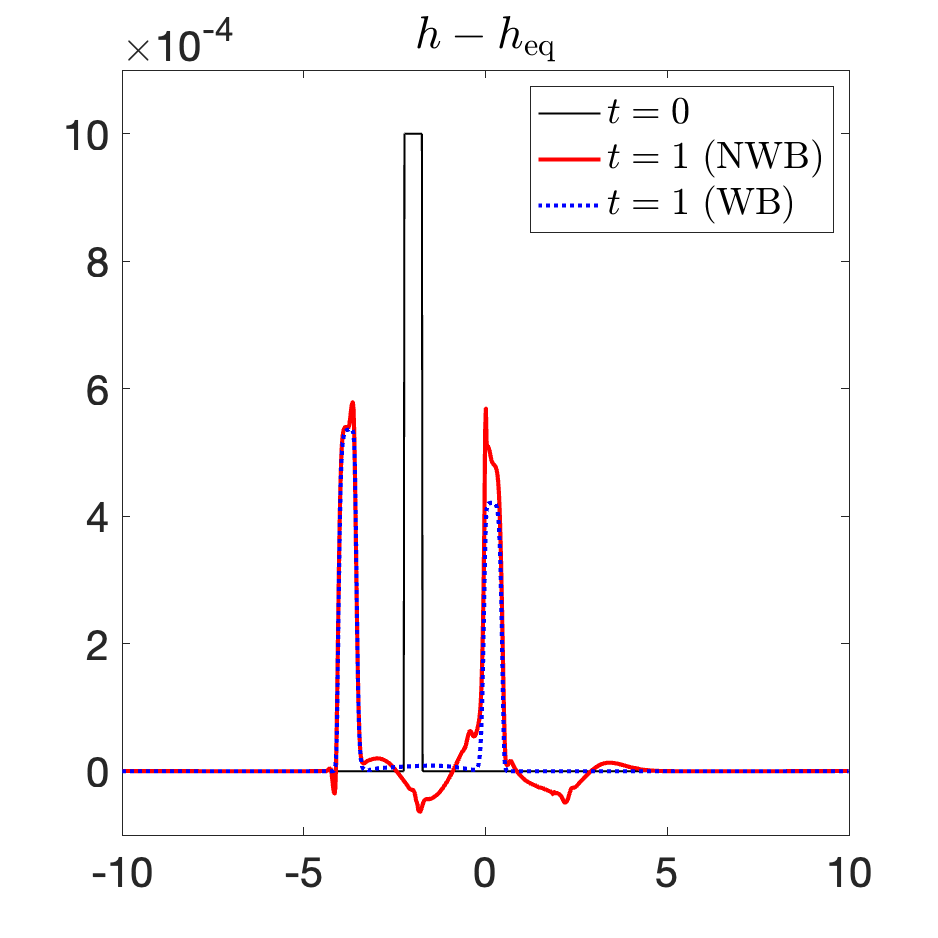}
            \includegraphics[trim=0.6cm 0.8cm 1.1cm 0.2cm, clip, width=5.7cm]{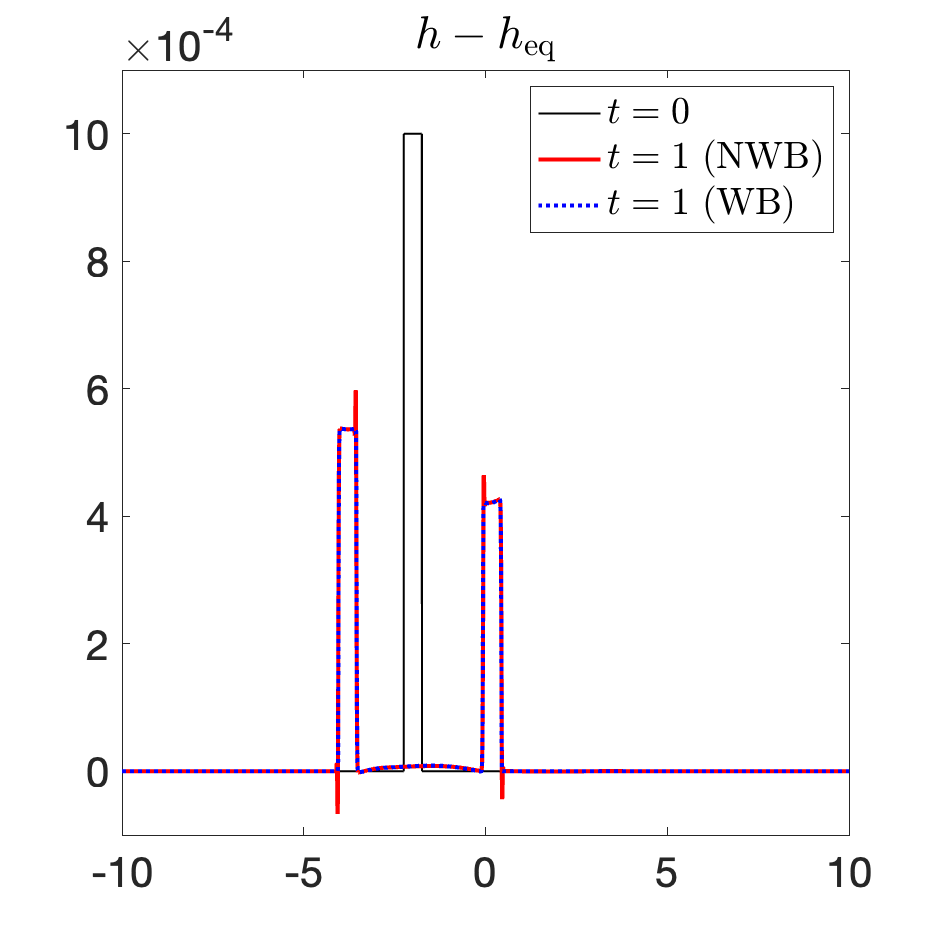}}
\vskip7pt
\centerline{\includegraphics[trim=0.6cm 0.8cm 1.1cm 0.2cm, clip, width=5.7cm]{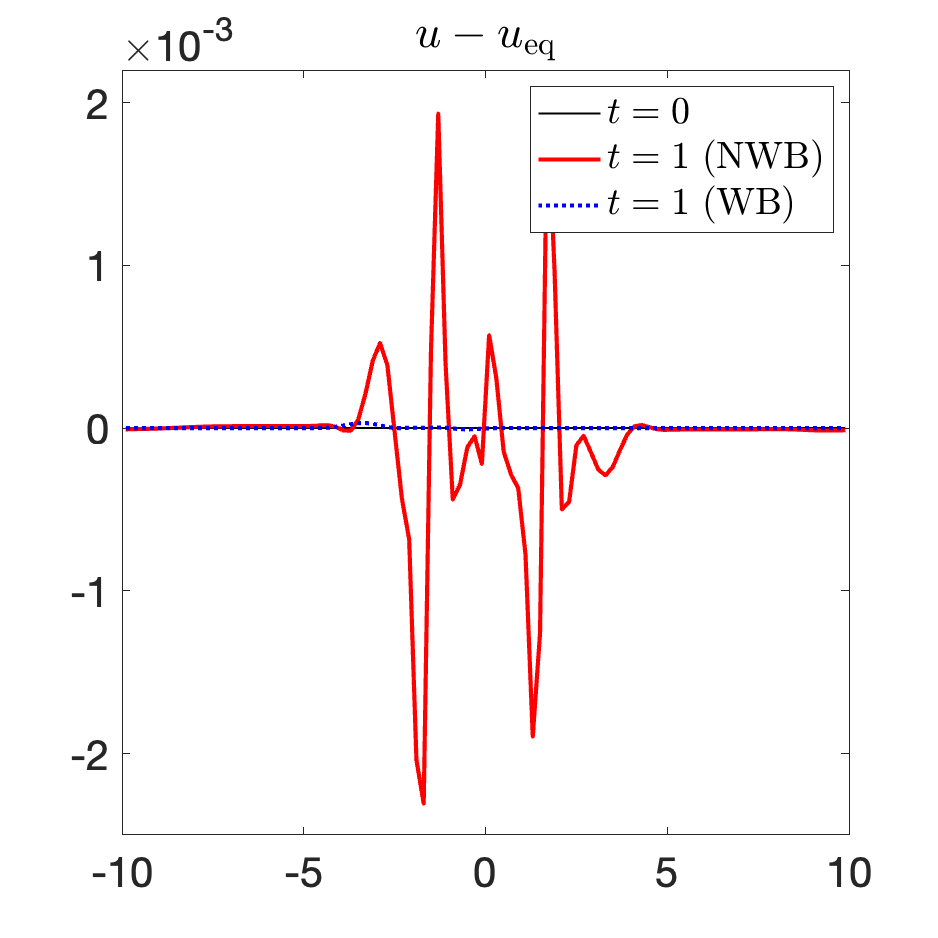}
            \includegraphics[trim=0.6cm 0.8cm 1.1cm 0.2cm, clip, width=5.7cm]{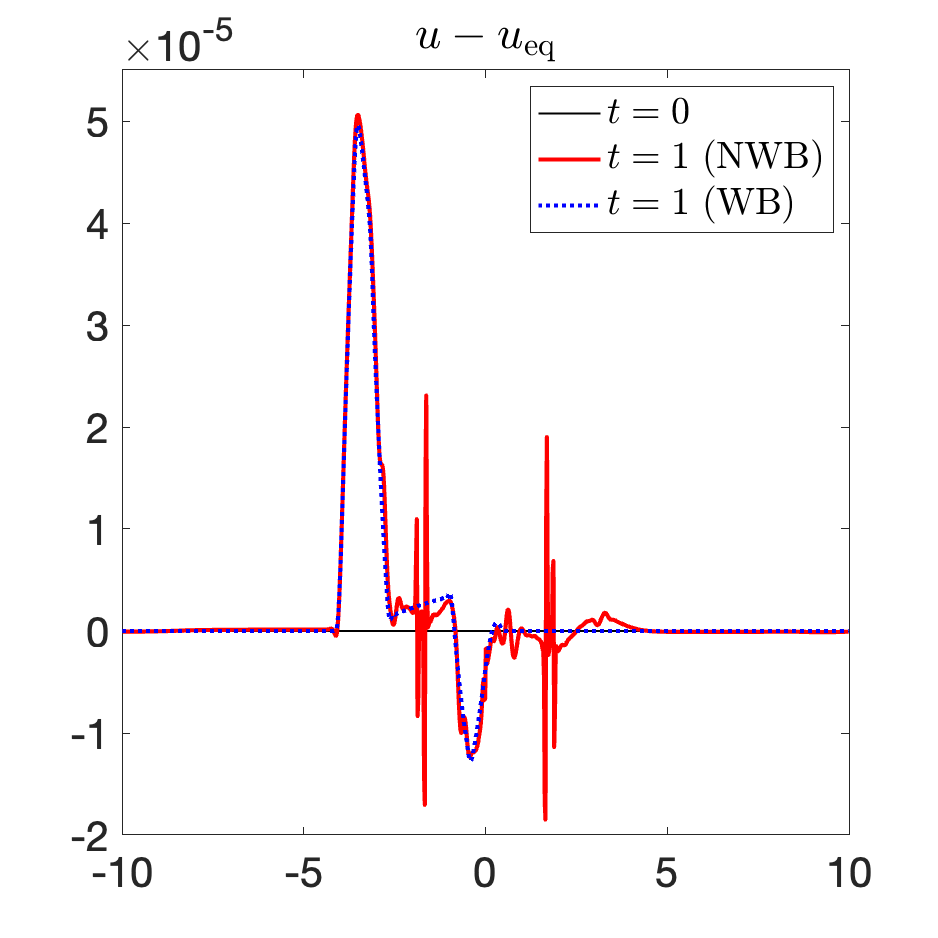}
            \includegraphics[trim=0.6cm 0.8cm 1.1cm 0.2cm, clip, width=5.7cm]{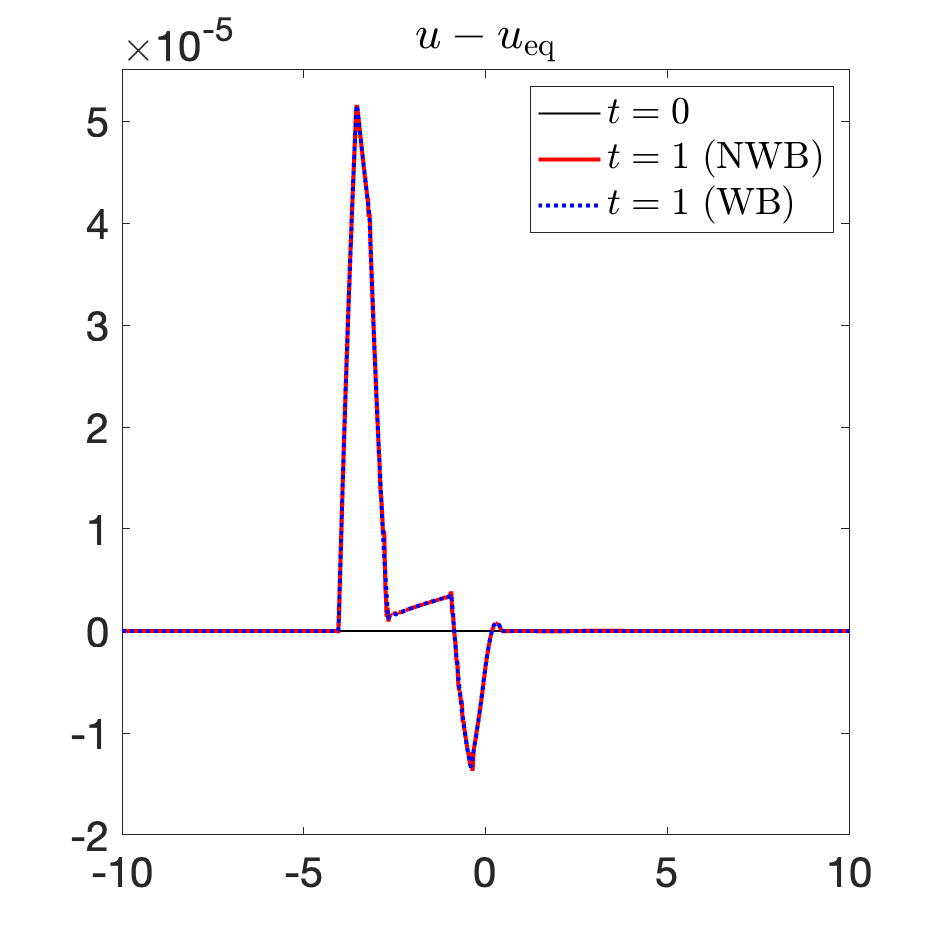}}
\vskip7pt
\centerline{\includegraphics[trim=0.6cm 0.8cm 1.1cm 0.2cm, clip, width=5.7cm]{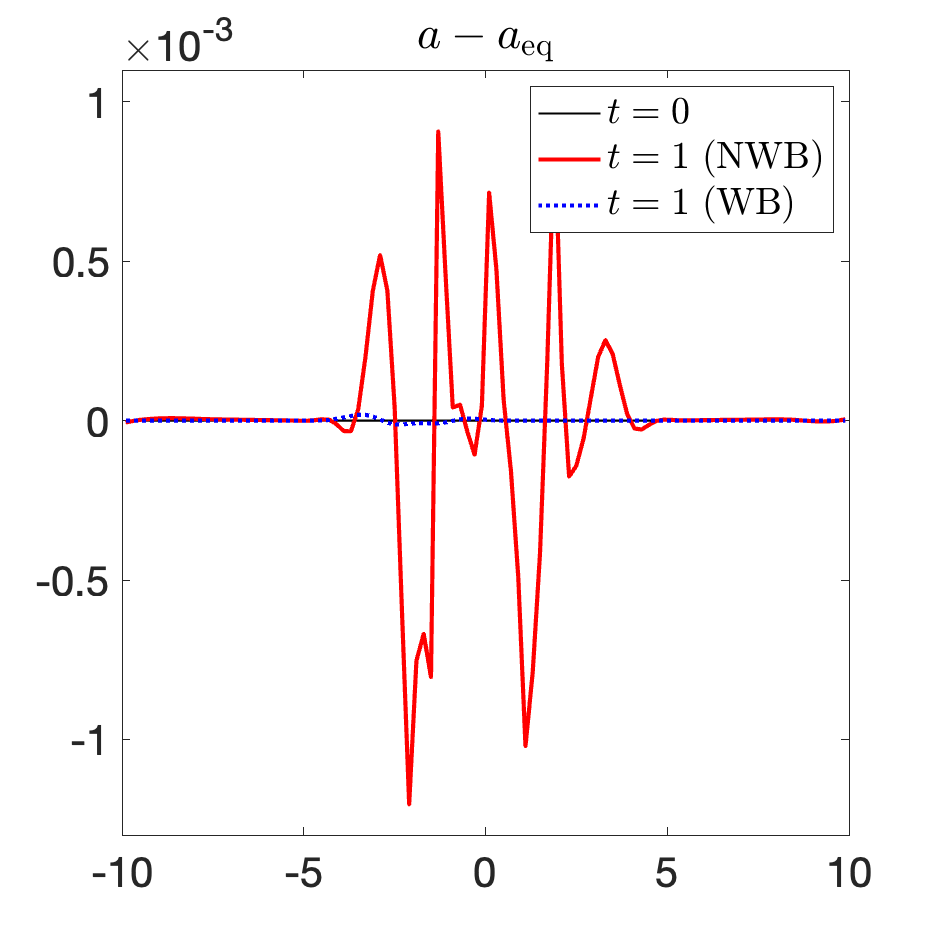}
            \includegraphics[trim=0.6cm 0.8cm 1.1cm 0.2cm, clip, width=5.7cm]{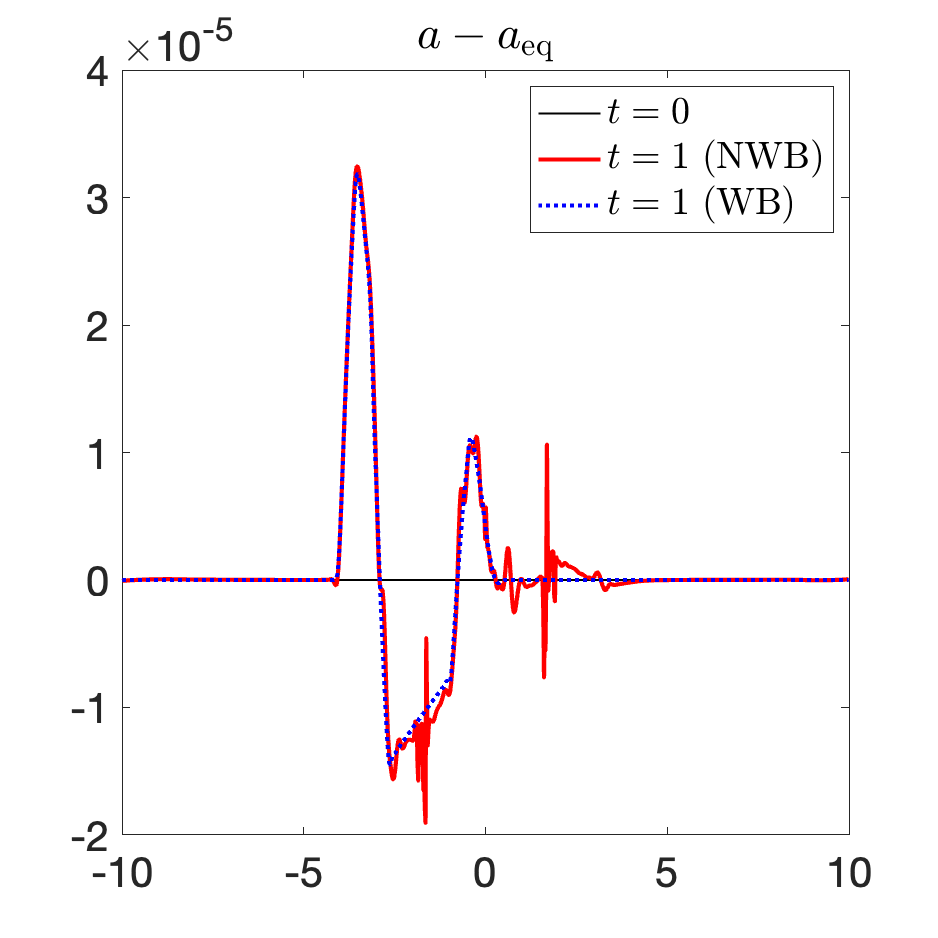}
            \includegraphics[trim=0.6cm 0.8cm 1.1cm 0.2cm, clip, width=5.7cm]{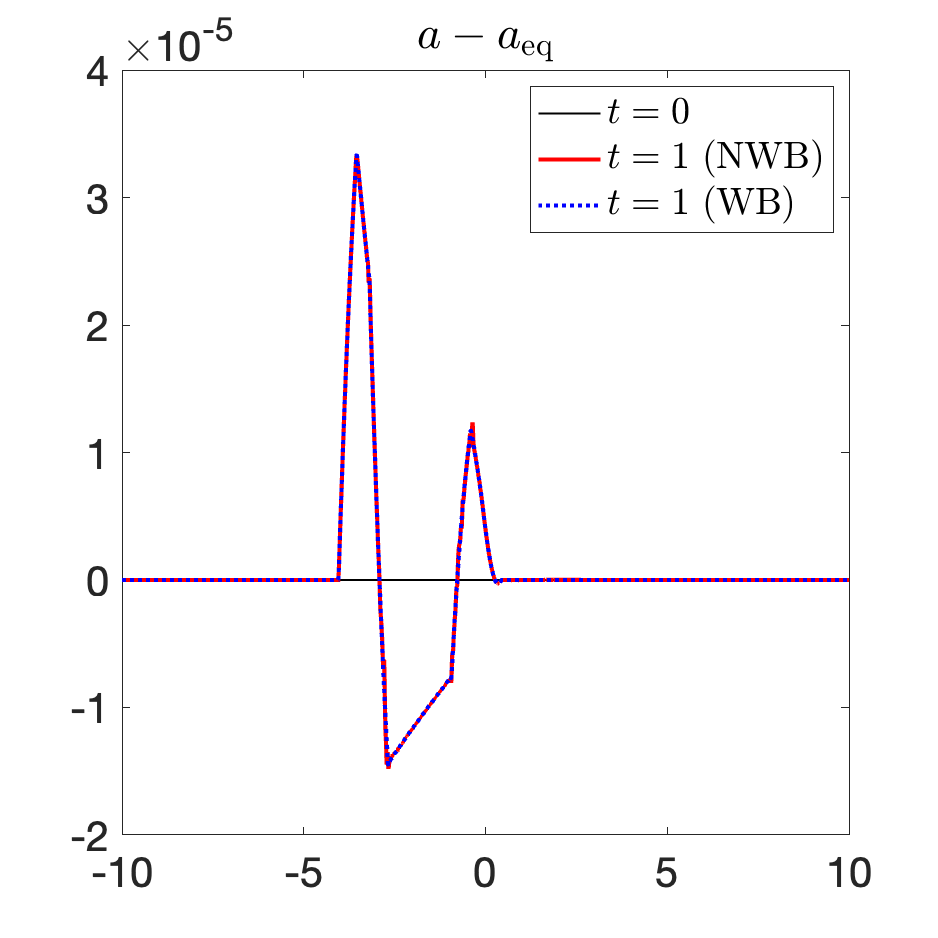}}
\caption{\sf Example 2 (small perturbation of the steady state): $h(y,1)-h_{\rm eq}(y)$ (top row), $u(y,1)-u_{\rm eq}(y)$ (middle row), and
$a(y,1)-a_{\rm eq}(y)$ (bottom row) computed by both the WB and NWB schemes with $N=100$ (left column), 1000 (middle column), and 10000
(right column) uniform cells.\label{fig45}}
\end{figure}

\subsubsection*{Example 3---Magneto-Geostrophic Adjustment at Low Rossby Numbers}
In this example, we consider the magneto-geostrophic adjustment problem with low Rossby numbers, that is, with both $Ro<1$ and $Ro_m<1$. As
a result, smooth outward-moving waves are initially not expected to form shocks as they propagate.

We consider the following initial conditions:
\begin{equation*}
\big(h(y,0),u(y,0),v(y,0),a(y,0),b(y,0)\big)=\big(1,0.1e^{-y^2},0,0,0.1\big),
\end{equation*}
with the constant Coriolis parameter $f(y)\equiv1$ and flat bottom topography $Z(y)\equiv0$ on the computational domain $[-200,200]$ subject
to the outflow boundary conditions.

We first use the above setting to test the experimental rate of convergence achieved by the proposed 1-D flux globalization based WB PCCU
scheme. To this end, we compute the solution until $t=5$ on a uniform mesh with $N=32000$ and plot the obtained $h$, $u$, $v$, and $a$ in
Figure \ref{fig46f}. In order to obtain the experimental $L^1$ rate of convergence, we compute the solution on several different meshes and
then use the Runge formula
\begin{equation*}
{\rm Rate}_N(h)=\log_2\left(\frac{\|h_{N/2}-h_N\|_1}{\|h_N-h_{2N}\|_1}\right),
\end{equation*}
where $h_N$ denotes the water depth $h$ computed on the uniform mesh consisting of $N$ cells (similar formulae can be written for the other
components of the computed solution). The obtained results, reported in Table \ref{tab43}, confirm that the expected second order of
accuracy has been reached.
\begin{figure}[ht!]
\centerline{\includegraphics[trim=0.6cm 0.7cm 1.5cm 0.2cm, clip, width=6.0cm]{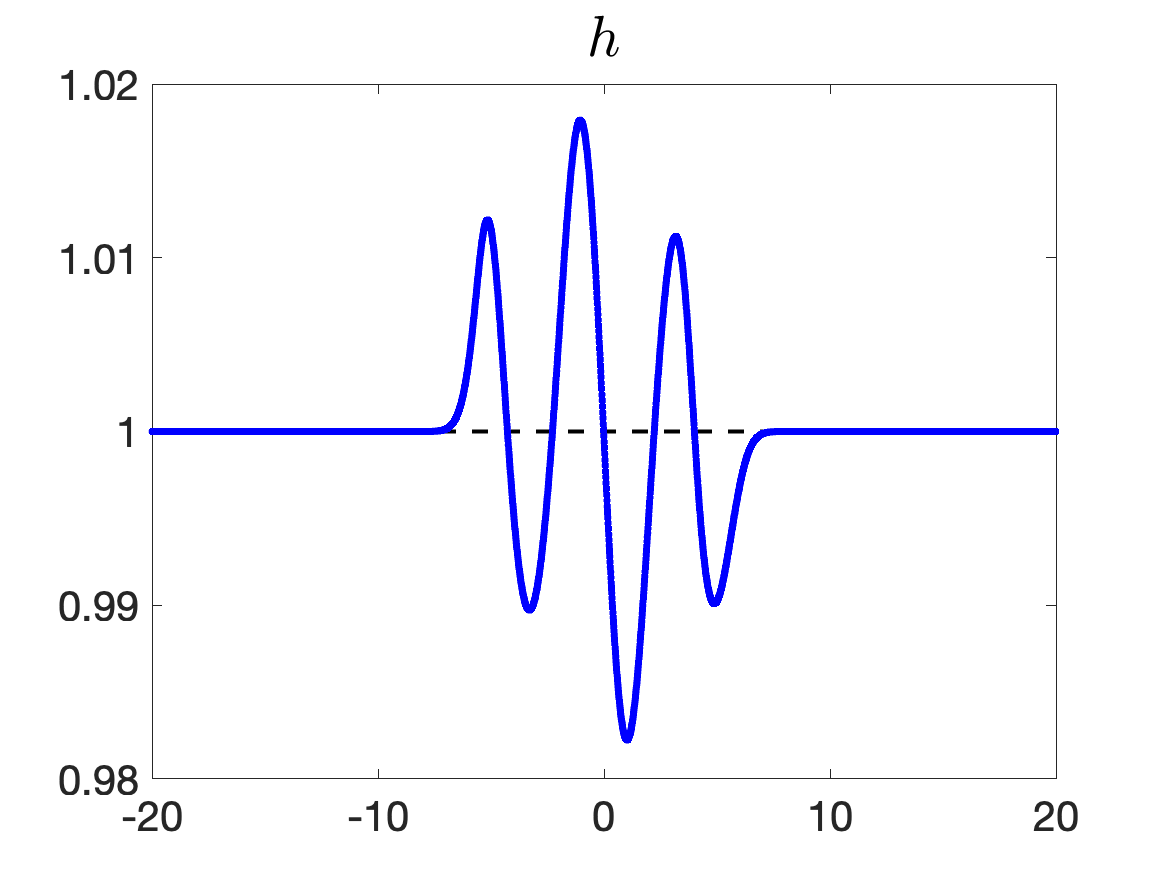}\hspace*{0.2cm}
            \includegraphics[trim=0.6cm 0.7cm 1.5cm 0.2cm, clip, width=6.0cm]{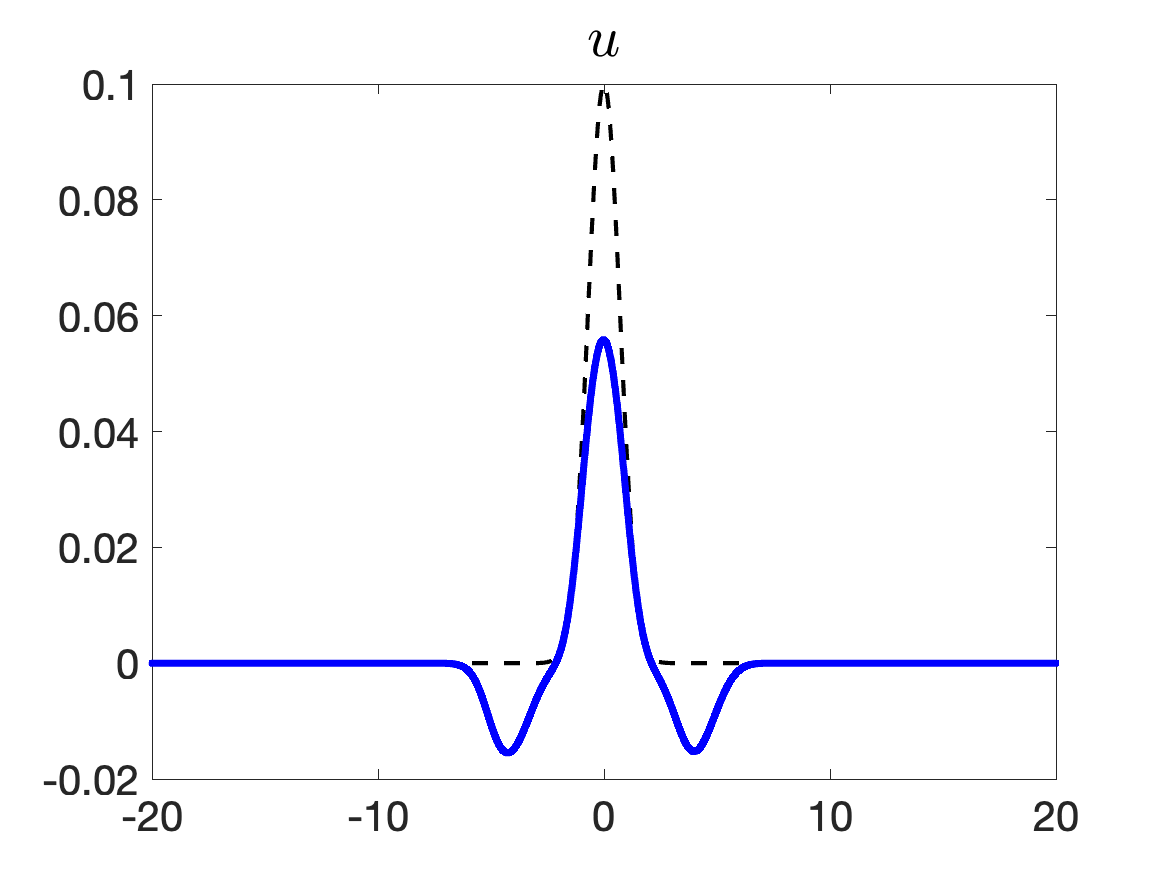}}
\vskip7pt
\centerline{\includegraphics[trim=0.6cm 0.7cm 1.5cm 0.2cm, clip, width=6.0cm]{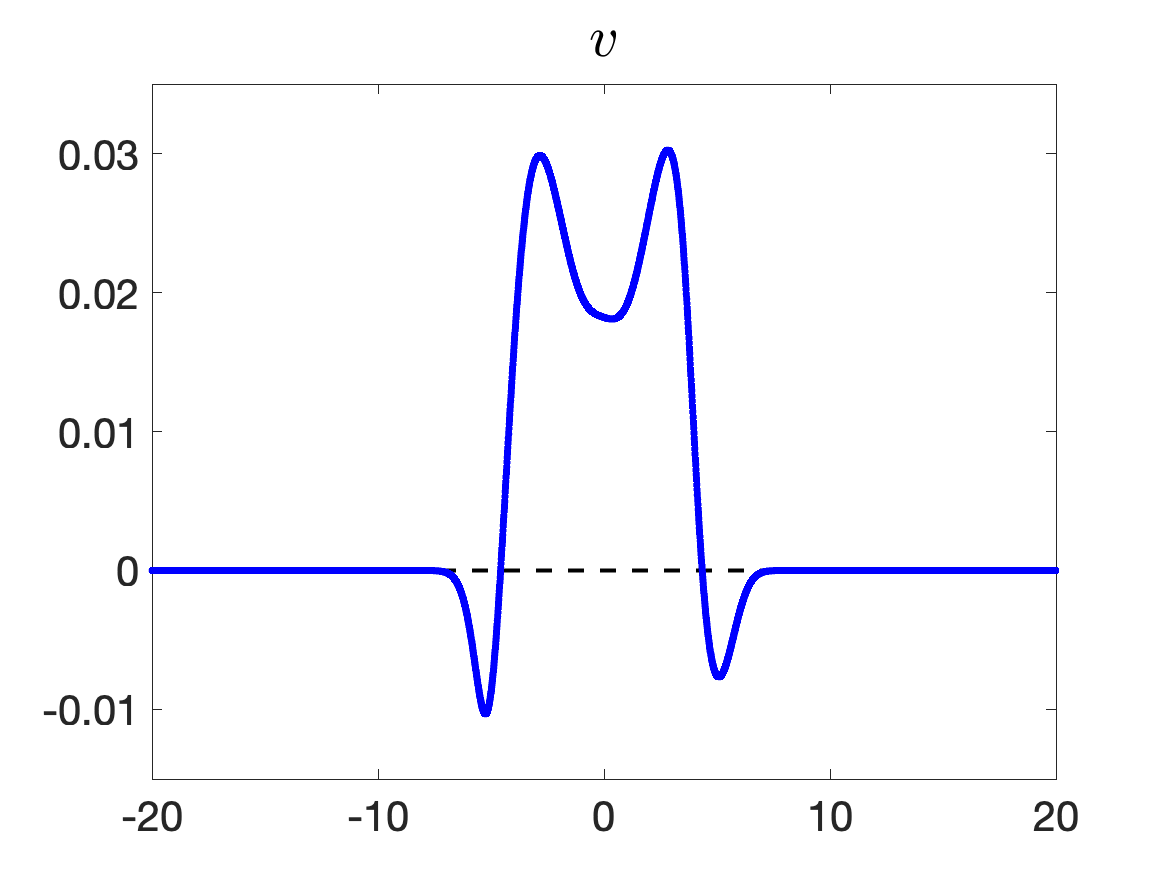}\hspace*{0.2cm}
            \includegraphics[trim=0.6cm 0.7cm 1.5cm 0.2cm, clip, width=6.0cm]{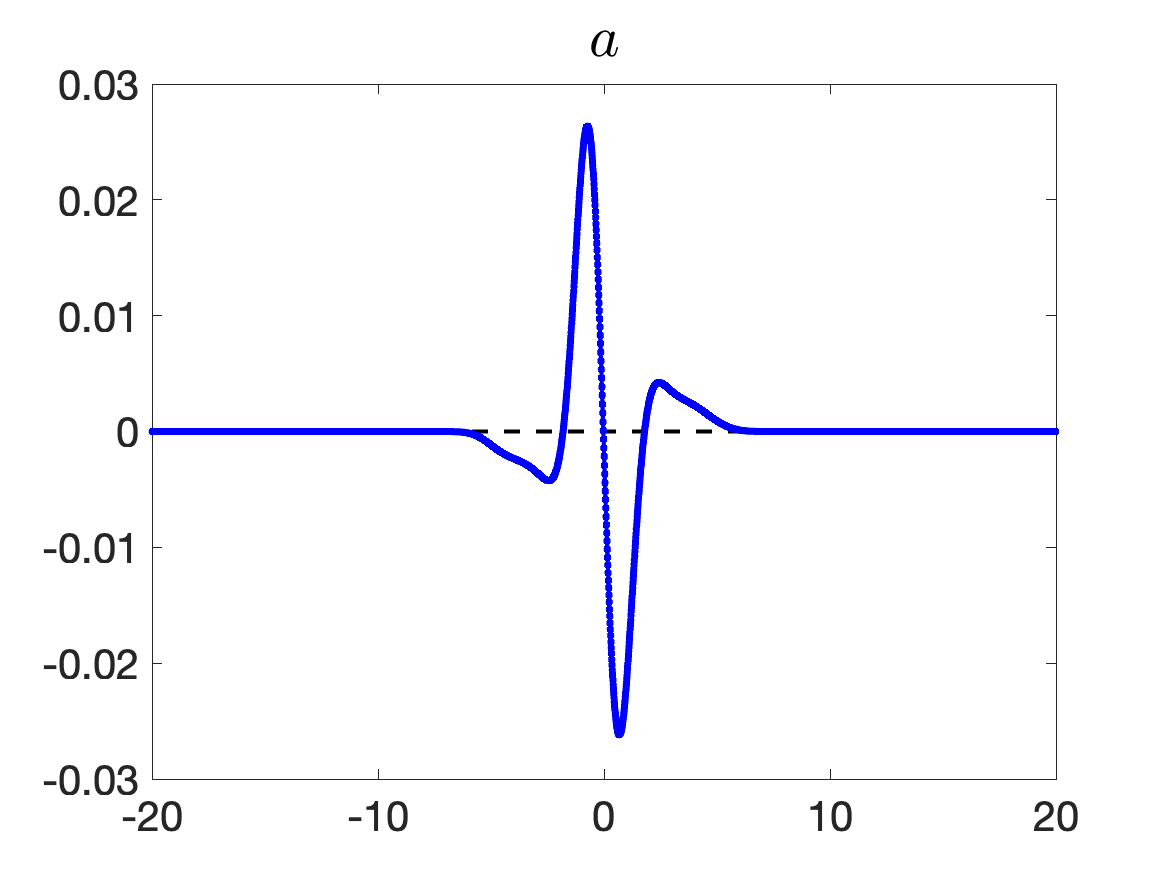}}
\caption{\sf Example 3: Computed $h(y,5)$, $u(y,5)$, $v(y,5)$, and $a(y,5)$ together with the corresponding initial conditions (dashed black
lines). Zoom at $[-20,20]$.\label{fig46f}}
\end{figure}
\begin{table}[ht!]
\centering
\small{
\begin{tabular}{|c|cc|cc|cc|cc|}
\hline
$N$&$\|h_N-h_{2N}\|_1$&Rate&$\|u_N-u_{2N}\|_1$&Rate&$\|v_N-v_{2N}\|_1$&Rate&$\|a_N-a_{2N}\|_1$&Rate\\
\hline
4000 &1.26e-03&2.19&1.69e-03&1.93&1.10e-03&1.86&1.70e-03&2.34\\
8000 &2.74e-04&2.21&3.84e-04&2.14&2.57e-04&2.09&2.59e-04&2.71\\
16000&6.21e-05&2.14&7.38e-05&2.38&6.11e-05&2.07&4.43e-05&2.55\\
32000&1.49e-05&2.06&1.38e-05&2.42&1.50e-05&2.02&7.23e-06&2.62\\
64000&3.67e-06&2.02&2.79e-06&2.30&3.72e-06&2.01&1.30e-06&2.48\\
\hline
\end{tabular}}
\caption{\sf Example 3: $L^1$-errors and the corresponding experimental rates of convergence.\label{tab43}}
\end{table}

We should emphasize that Figure \ref{fig46f} confirms the scenario of magneto-geostrophic adjustment sketched in \S\ref{sec411}, showing
that at $t=5$ fast magneto-inertia-gravity waves have already been evacuated from the perturbation location and slow Alfv\'en waves are
being emitted, as follows from the phase relations between $u$ and $a$, which were already discussed in Example 1. We now test the
magneto-geostrophic equilibrium of the quasi-stationary central part of the perturbation. To this end, we compute the numerical solution
until a relatively large final time $t=40$ and measure the quantities on both sides of \eref{4.5} as they are to be the same at the
aforementioned steady state. However, at $t=40$, these quantities remain quite different, as shown in Figure \ref{fig47} (left). This often
occurs in geostrophic adjustments when waves have near-zero group velocities and thus stay in the center of the computational domain for
long times; see, e.g., the discussion in \cite{Kurganov2020Well}. Under such circumstances, the magneto-geostrophic balance should be checked
for time-averaged components. We therefore take the time averages in \eref{4.5},
\begin{equation}
\frac{1}{T-2T_f}\int\limits_{2T_f}^T\left(gh_y-bb_y\right){\rm d}t=-\frac{1}{T-2T_f}\int\limits_{2T_f}^Tfu\,{\rm d}t,
\label{4.6}
\end{equation}
where $T_f=2\pi/f$, and measure the LHS and RHS of \eref{4.6} at $T=40$. The obtained results are reported in Figure \ref{fig47} (right),
where one can see that the proposed method does indeed time-advance the numerical solution to the expected magneto-geostrophic equilibrium.
\begin{figure}[ht!]
\centerline{\includegraphics[trim=0.8cm 0.9cm 1.5cm 0.9cm, clip, width=7.0cm]{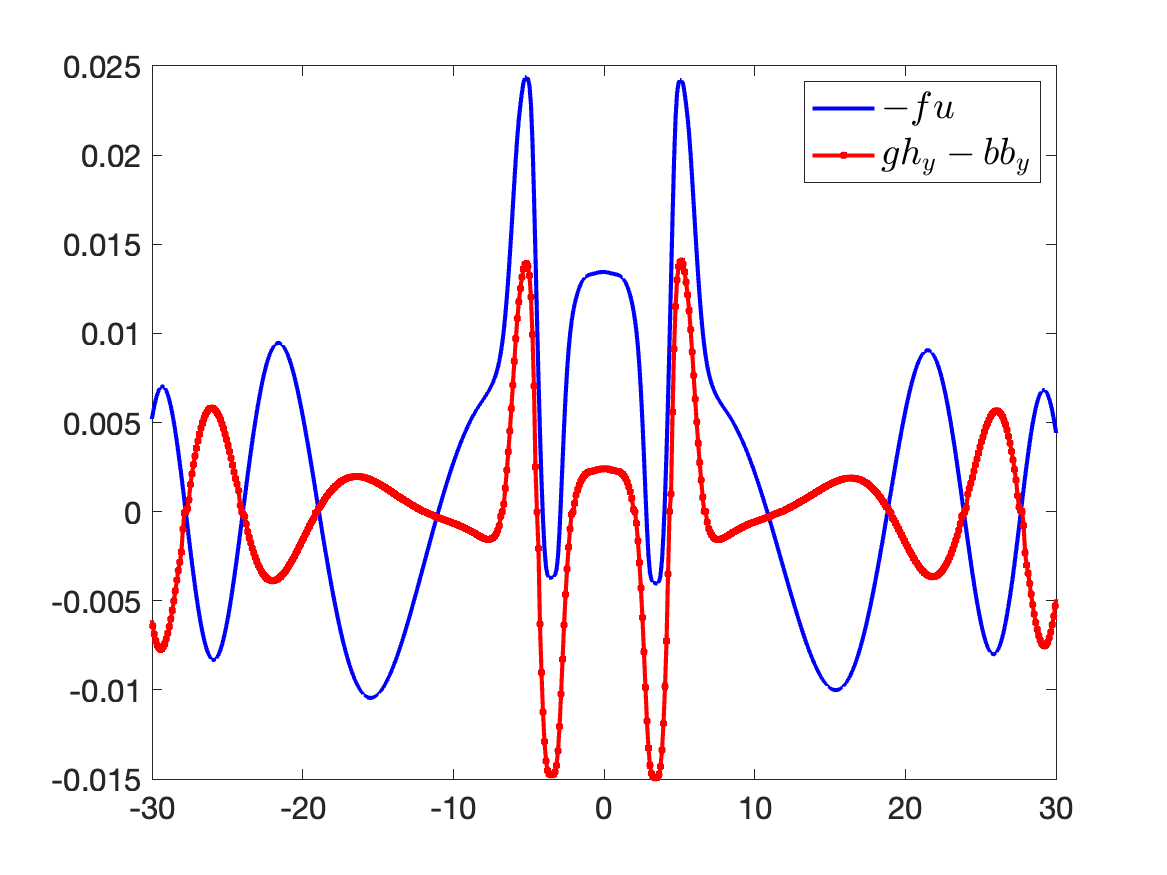}\hspace*{0.5cm}
            \includegraphics[trim=0.8cm 0.9cm 1.5cm 0.9cm, clip, width=7.0cm]{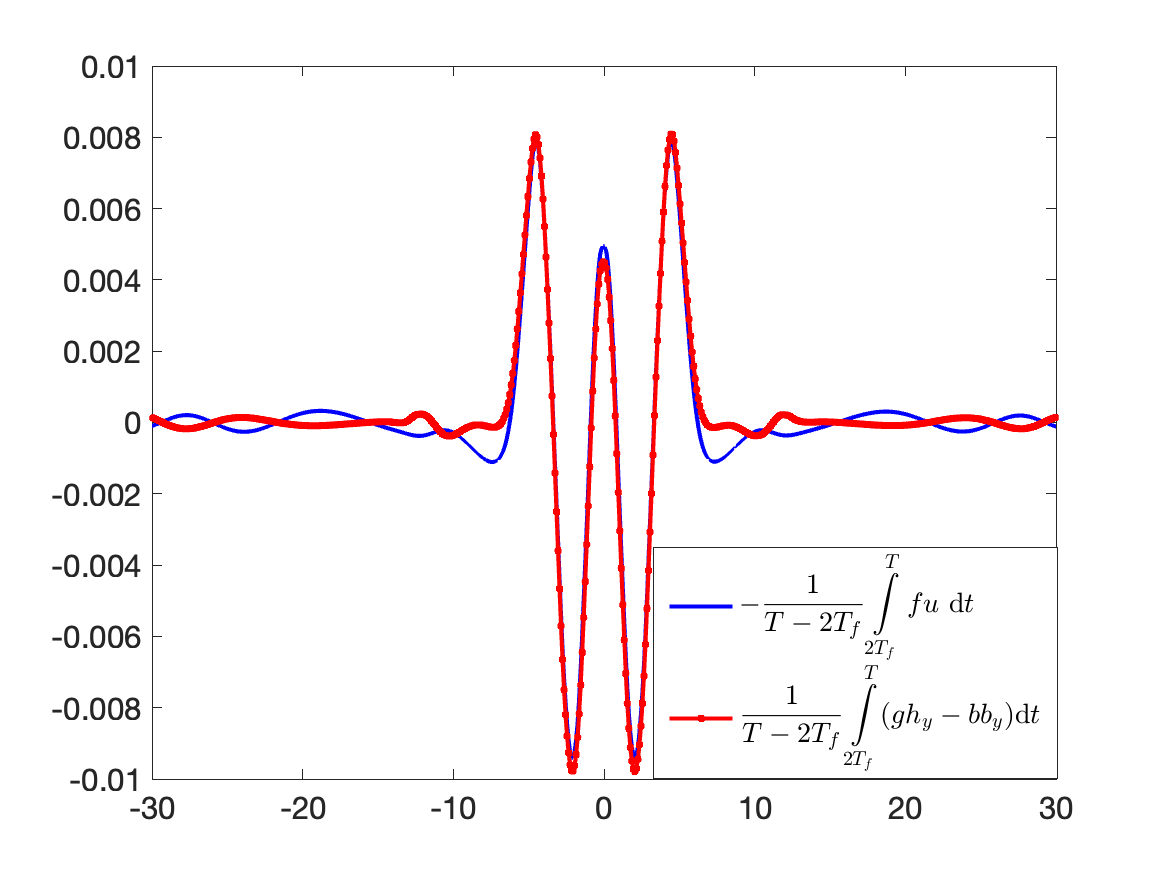}}
\caption{\sf Example 3: $gh_y-bb_y$ and $-fu$ at time $t=40$ (left) and the corresponding time-averaged quantities (right). Zoom at
$[-30,30]$.\label{fig47}}
\end{figure}

\subsubsection*{Example 4---Magneto-Geostrophic Adjustment at High Rossby Numbers}
We continue our exploration of magneto-geostrophic adjustment with an example where $Ro>1$ and $Ro_m>1$. The results are to be compared with
those in ``pure'', non-magnetic RSW model \cite{zeitlin2004rsw}, this is why the initial conditions are the same, in what concerns velocity
and thickness, with a superimposed constant meridional magnetic field:
\begin{equation*}
\begin{aligned}
&h(y,0)\equiv1,\quad u(y,0)=\frac{11}{10}\cdot\frac{(1+\tanh\p{4x+2})(1-\tanh\p{4x-2})}{(1+\tanh 2)^2},\\
&v(y,0)\equiv0,\quad a(y,0)\equiv0,\quad b(y,0)\equiv1.1,
\end{aligned}
\end{equation*}
with the constant Coriolis parameter $f(y)\equiv1$ and flat bottom topography $Z(y)\equiv0$ on the computational domain $[-200,200]$ subject
to the outflow boundary conditions.

{We first compute the solution by the proposed 1-D flux globalization based WB PCCU scheme until $t=5$ on a uniform mesh with $N=32000$. The
obtained $h$, $u$, $v$, and $a$ are shown in Figure \ref{fig48}, where one can clearly see that by that time, compared with the previous
example, the solution has developed left- and right-propagating discontinuities. Shock formation is expected, in the light of the results in
\cite{zeitlin2004rsw}, in the $h$ and $v$ fields, although the form of both signals differs from those in \cite[Figure 2]{zeitlin2004rsw}.
So, the evolution of these fields is affected by the mean magnetic field. Moreover, the discontinuities are also clearly seen in the $u$ and
$a$ fields. These contact/tangential discontinuities, which are transverse to the direction of propagation, are well-known in MHD; see,
e.g., \cite{LandauLifshitz}. A difference, more clearly seen in the left-moving waves, between the speed of the discontinuities observed in
the $h$ and $v$ fields compared with the $u$ and $a$ ones is since they are associated with faster magneto-inertia-gravity
and slower rotation-modified Alfv\'en waves, respectively. Wave-breaking and shock formation also manifest themselves in the evolution of
the total energy \eref{energy1} presented in Figure \ref{fig49}, where one can see that the energy first remains practically constant but
then diminishes after the appearance of the discontinuity. We have also verified that the energy drops across the shock, as it should.
\begin{figure}[ht!]
\centerline{\includegraphics[trim=0.6cm 0.7cm 1.5cm 0.2cm, clip, width=6.0cm]{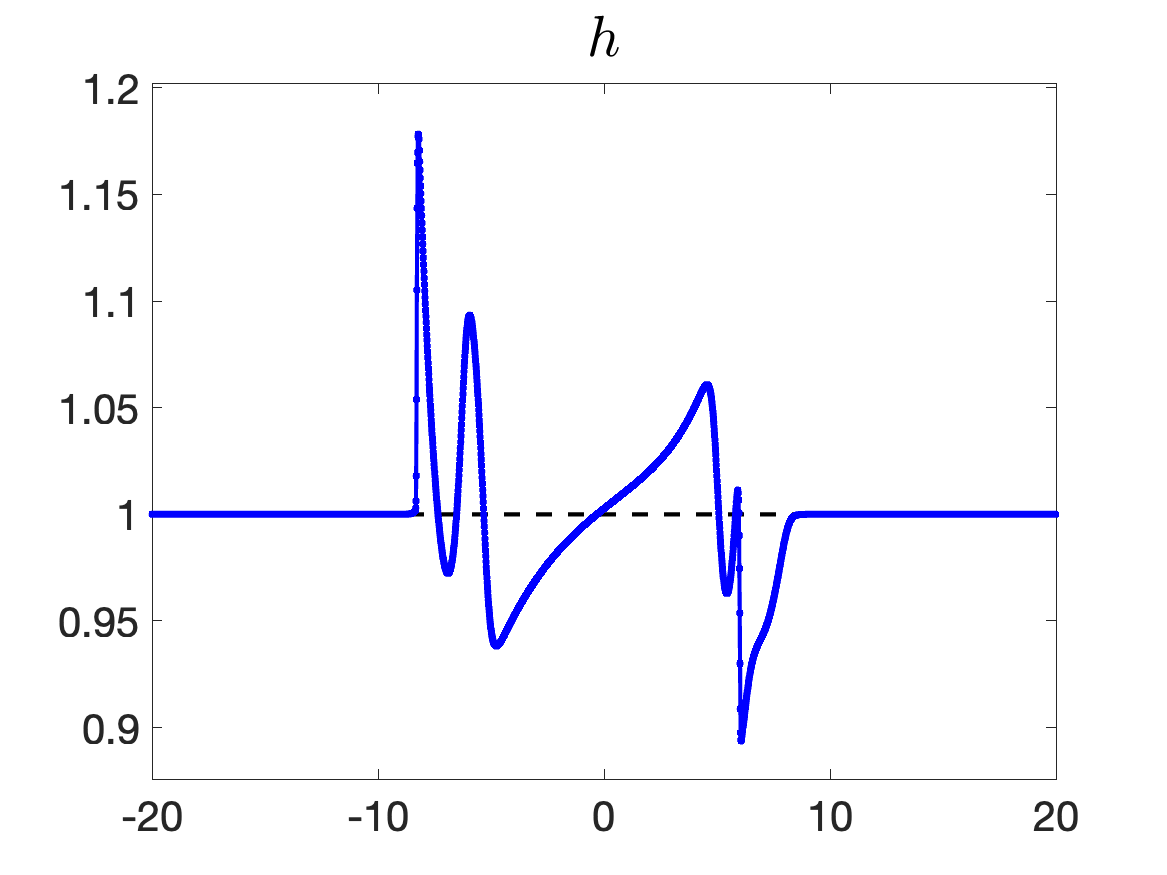}\hspace*{0.2cm}
            \includegraphics[trim=0.6cm 0.7cm 1.5cm 0.2cm, clip, width=6.0cm]{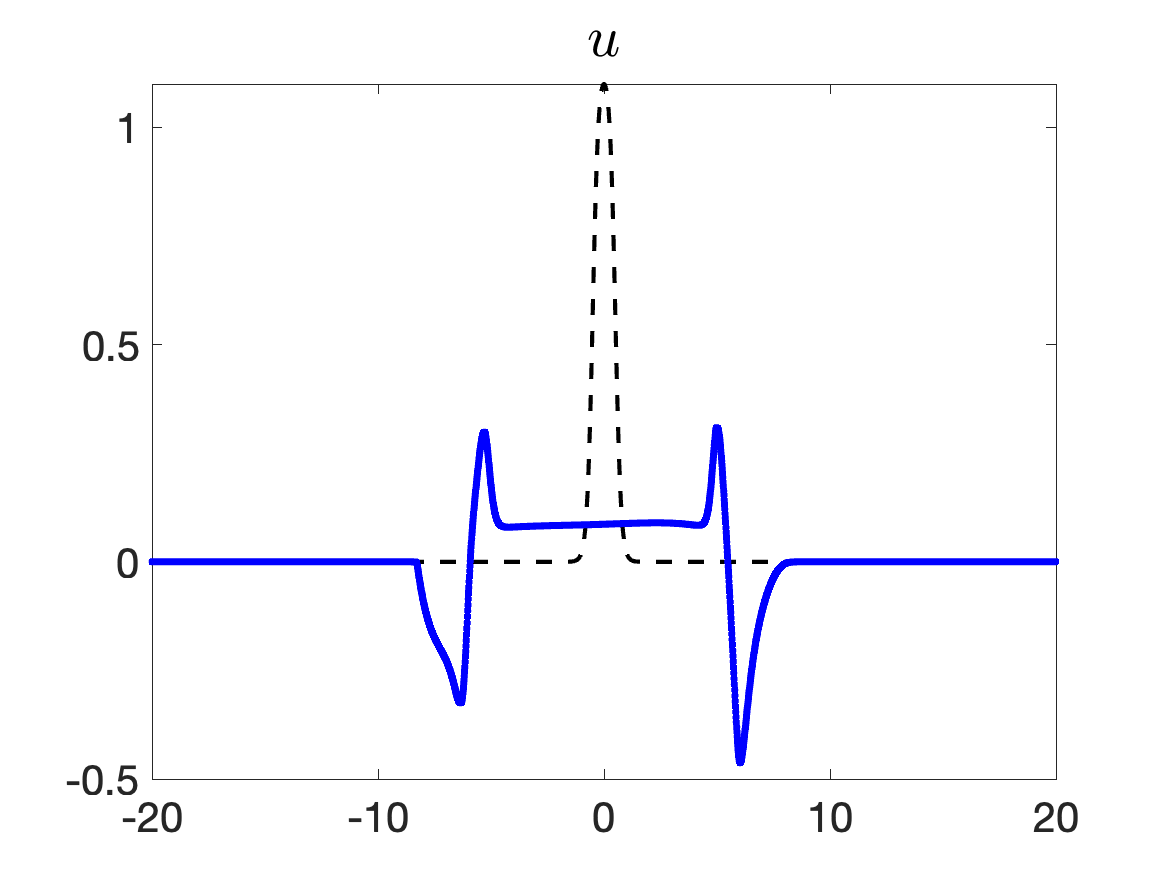}}
\vskip7pt
\centerline{\includegraphics[trim=0.6cm 0.7cm 1.5cm 0.2cm, clip, width=6.0cm]{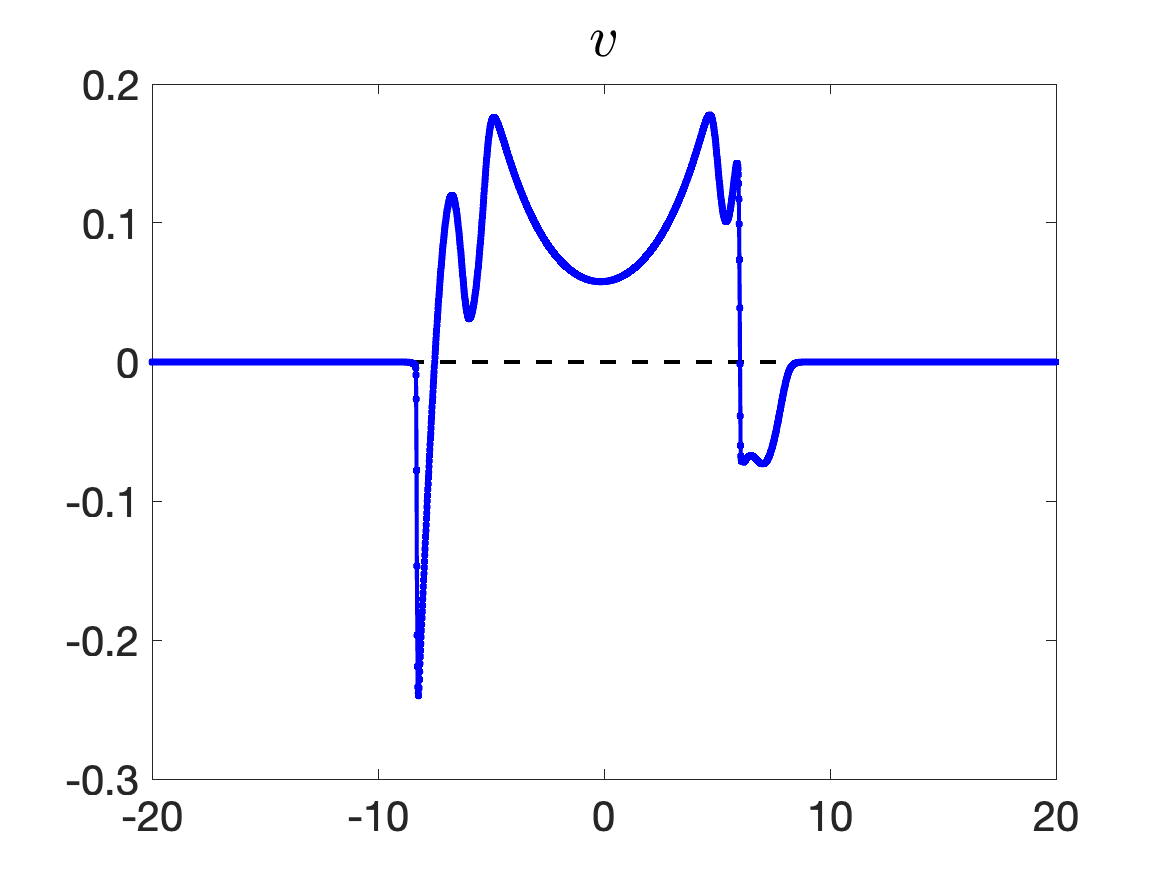}\hspace*{0.2cm}
            \includegraphics[trim=0.6cm 0.7cm 1.5cm 0.2cm, clip, width=6.0cm]{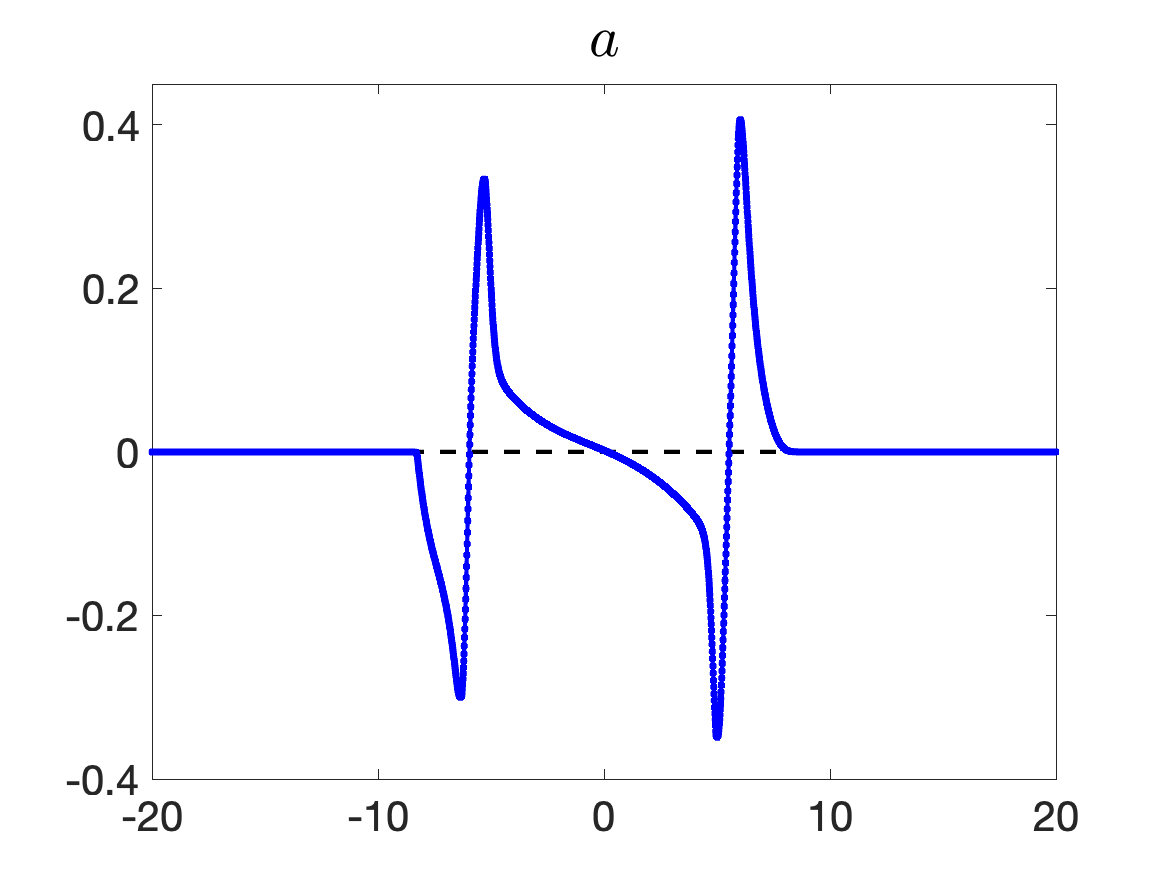}}
\caption{\sf Example 4: Computed $h(y,5)$, $u(y,5)$, $v(y,5)$, and $a(y,5)$ together with the corresponding initial conditions (dashed black
lines). Zoom at $[-20,20]$.\label{fig48}}
\end{figure}
\begin{figure}[ht!]
\centerline{\includegraphics[trim=0.0cm 0.5cm 3.4cm 0.5cm, clip, width=6.0cm]{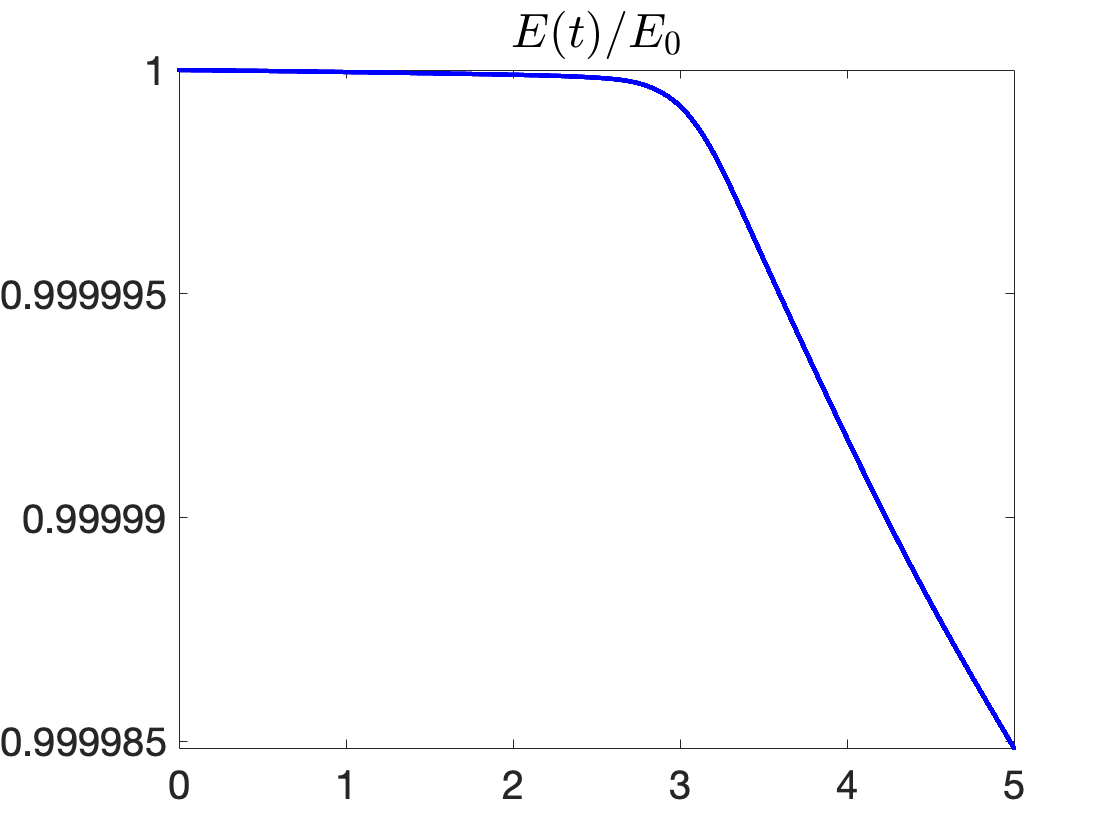}}
\caption{\sf Example 4: Time evolution of total energy $E(t)$, scaled by the initial total energy $E(0)$.\label{fig49}}
\end{figure}

Similarly to the experiments conducted in the previous example, we then study the convergence towards the magneto-geostrophic equilibrium.
To this end, we use the time-averaged computation (see \eref{4.6}) to verify whether the magneto-geostrophic balance has been achieved by
$T=100$ (we take a larger final time than in Example 3 as the solution here is nonsmooth and its magnetic field is stronger, so the
convergence is expected to be slower). The obtained results are reported in Figure \ref{fig410}, where we plot the LHS and RHS of
\eref{4.6}. One can observe a reasonable agreement, although there is a discrepancy in the center of the original jet, presumably
due to nonlinear effects.
\begin{figure}[ht!]
\centerline{\includegraphics[trim=1.9cm 0.9cm 1.4cm 0.4cm, clip, width=7.0cm]{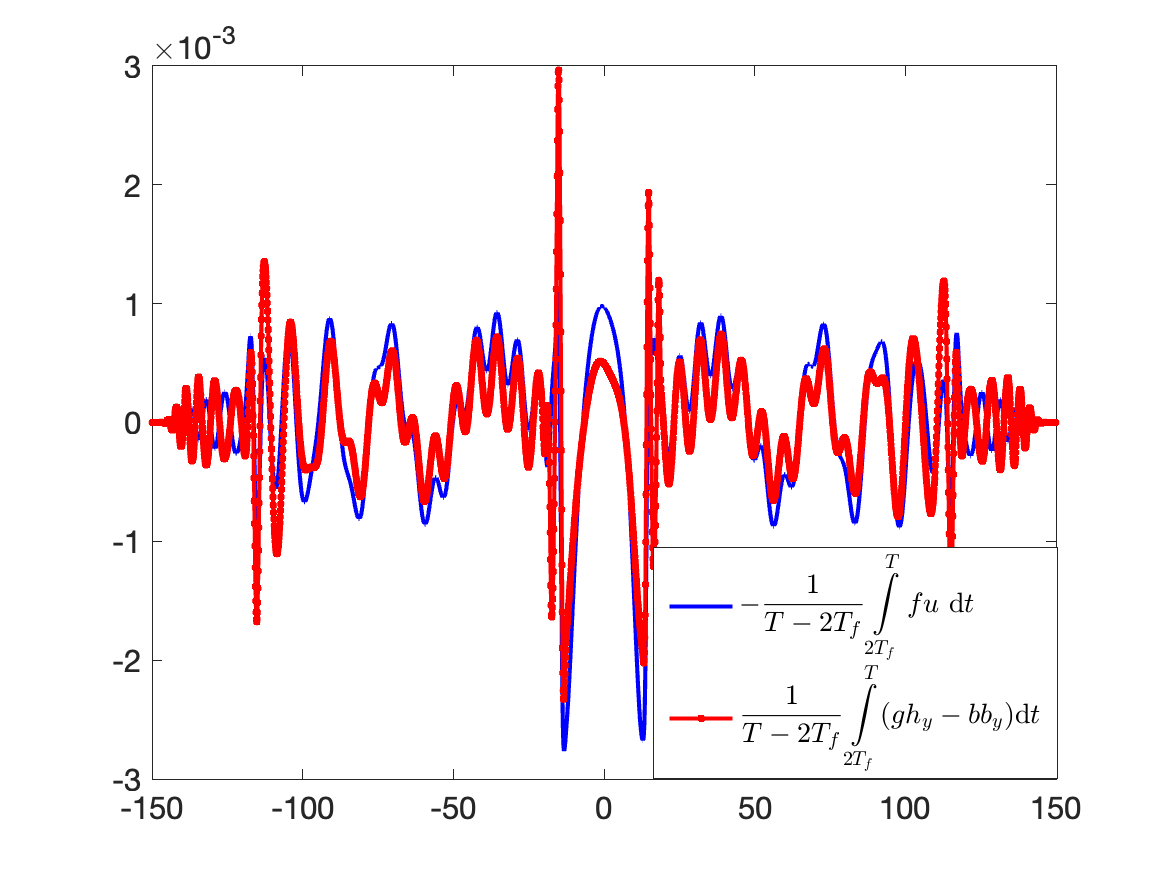}}
\caption{\sf Example 4: $\frac{1}{T-2T_f}\int_{2T_f}^T\left(gh_y-bb_y\right){\rm d}t$ and $-\frac{1}{T-2T_f}\int_{2T_f}^Tfu\,{\rm d}t$ at
$T=100$. Zoom at $[-150,150]$.\label{fig410}}
\end{figure}

\subsection{2-D Numerical Examples}\label{sec42}
\subsubsection{General Facts About the 2-D MRSW Model}\label{sec421}
A straightforward linearization of the system \eref{1.1} over the state of rest with a constant magnetic field reveals the presence of fast
magneto-inertia-gravity waves and slow rotation-modified Alfv\'en waves. The former can propagate in any direction, while the latter
propagates only along the direction defined by the magnetic field vector.

We note that, in general, neither geostrophic,
\begin{equation*}
f\bm u^\perp=-g\nabla h,~~\bm b=\bm0,
\end{equation*}
nor magneto-geostrophic,
\begin{equation*}
f\bm u^\perp=-g\nabla h+\bm b\cdot\nabla\bm b,
\end{equation*}
equilibria are steady solutions of the 2-D MRSW model unless the former one is unidirectional, like in \S\ref{sec411}. This situation is
analogous to that of non-magnetic RSW equations, for which numerous studies show that, at least at low Rossby numbers, any localized
perturbation rapidly adjusts to a state close to geostrophic equilibrium, which then slowly evolves, obeying the so-called quasi-geostrophic
dynamics, which is, essentially, vortex dynamics; see, e.g., \cite{zeitlin2018geophysical} and references therein. Magneto quasi-geostrophic
(MQG) approximation of the MRSW equations, which can be constructed at small Rossby and magnetic Rossby numbers
\cite{Lahaye2022Coherent,Zeitlin2013Remarks}, describes slow rotation-modified Alfv\'en waves together with vortices. Magneto-geostrophic
adjustment remains largely unstudied in the 2-D MRSW model; the only work in this direction, to the best of our knowledge, is an
investigation in \cite{Magill2019vortexadjustment} of the evolution of a strong small-scale non-equilibrated Gaussian vortex in a uniform
magnetic field.

We should emphasize that an advantage of the 2-D MRSW model compared to the 1-D one is that it allows describing such physically important
dynamical entities as vortices. The simplest idealized vortex configuration is axisymmetric with only azimuthal velocity components. In order
to better understand this type of structure, it is useful to rewrite the system \eref{1.1} in polar coordinates $r$ and $\theta$ in the
nonconservative form:
\begin{equation}
\begin{aligned}
&\frac{\d}{\d t}h+\frac{\d}{\d r}(h\mathfrak u)+\frac{1}{r}\frac{\d}{\d\theta}(h\mathfrak u)+\frac{1}{r}(h\mathfrak u)=0,\\
&\frac{{\rm d}\mathfrak u}{{\rm d}t}-\frac{\mathfrak v^2}{r}-f\mathfrak v=-g\frac{\d}{\d r}(h+Z)+\mathfrak a\frac{\d\mathfrak a}{\d r}+
\mathfrak b\frac{\d\mathfrak a}{\d\theta}-\frac{1}{r}\mathfrak b^2,\\
&\frac{{\rm d}\mathfrak v}{{\rm d}t}+\frac{\mathfrak u\mathfrak v}{r}+f\mathfrak u=-g\frac{1}{r}\frac{\d}{\d\theta}(h+Z)+
\mathfrak a\frac{\d\mathfrak b}{\d r}+\mathfrak b\frac{\d\mathfrak b}{\d\theta}+\frac{1}{r}\mathfrak a\mathfrak b,\\
&\frac{{\rm d}\mathfrak a}{{\rm d}t}-\mathfrak a\frac{\d\mathfrak u}{\d r}-\mathfrak b\frac{\d\mathfrak u}{\d\theta}=0,\\
&\frac{{\rm d}\mathfrak b}{{\rm d}t}-\mathfrak a\frac{\d\mathfrak v}{\d r}-\mathfrak b\frac{\d\mathfrak v}{\d\theta}+
\frac{1}{r}(\mathfrak a\mathfrak v-\mathfrak b\mathfrak u)=0,\\
&\frac{\d}{\d r}(h\mathfrak a)+\frac{1}{r}\frac{\d}{\d\theta}(h\mathfrak b)+\frac{1}{r}(h\mathfrak a)=0,
\end{aligned}
\label{2Dpolar}
\end{equation}
where the polar decomposition of velocity and magnetic fields is used: $\bm u=\mathfrak u\bm{\hat r}+\mathfrak v\bm{\hat\theta}$,
$\bm b=\mathfrak a\bm{\hat r}+\mathfrak b\bm{\hat\theta}$, where $\bm{\hat r}$ and $\bm{\hat\theta}$ are unit vectors in $\bm r$ and
$\bm\theta$ directions, respectively, and $\frac{{\rm d}}{{\rm d}t}:=\frac{\d}{\d t}+\mathfrak u\frac{\d}{\d r}+
\mathfrak v\frac{1}{r}\frac{\d}{\d\theta}$.

We notice that the divergence-free condition expressed by the last equation in \eref{2Dpolar} forbids configurations with a purely radial
nonsingular magnetic field. We also notice that axisymmetric {\em magneto-cyclo-geostrophic equilibria} between $v(r)$, $h(r)$, $Z(r)$, and
$b(r)$:
\begin{equation}
\frac{\mathfrak v^2}{r}+f\mathfrak v=g\frac{\d}{\d r}(h+Z)+\frac{1}{r}\mathfrak b^2,
\label{magcycgeo}
\end{equation}
with $\mathfrak a=\mathfrak u=0$ are exact steady-state solutions. Hence, we expect relaxation to one of these equilibria
(magneto-cyclo-geostrophic adjustment) for initial configurations close to those described by \eref{magcycgeo}. Obviously,
magneto-cyclo-geostrophic equilibria exist only on the $f$-plane as the beta-effect destroys axial symmetry.

\subsubsection*{Example 5---Quasi 1-D Steady-State with Linear Coriolis Parameter ($f(y)=0.1y$)}
In the first 2-D numerical example, we demonstrate the ability of the proposed flux globalization based WB PCCU scheme to preserve a quasi
1-D moving-water steady-state. To this end, we use the following initial conditions that satisfy \eref{3.11a}--\eref{3.12}:
\begin{equation}
\begin{aligned}
&u(x,y,0)=u_{\rm eq}(x,y)\equiv0.25,&&v(x,y,0)=v_{\rm eq}(x,y)\equiv0,&&a(x,y,0)=a_{\rm eq}(x,y)\equiv3,\\
&b(x,y,0)=b_{\rm eq}(x,y)\equiv0,&&E(x,y,0)=E_{\rm eq}(x,y)\equiv6,
\end{aligned}
\label{5.1}
\end{equation}
the bottom topography $Z(x,y)=\hf e^{-y^2}$, and the outflow boundary conditions set for the equilibrium variables. Notice that in this
example, unlike the 1-D Examples 1 and 2, the profile of $h_{\rm eq}(x,y)$, which depends on $y$ only, can be computed analytically using
\eref{3.11a} and \eref{5.1}; its 1-D slice is shown in Figure \ref{fig411}.
\begin{figure}[ht!]
\centerline{\includegraphics[trim=5.0cm 0.6cm 5.0cm 0.9cm, clip, width=5.5cm]{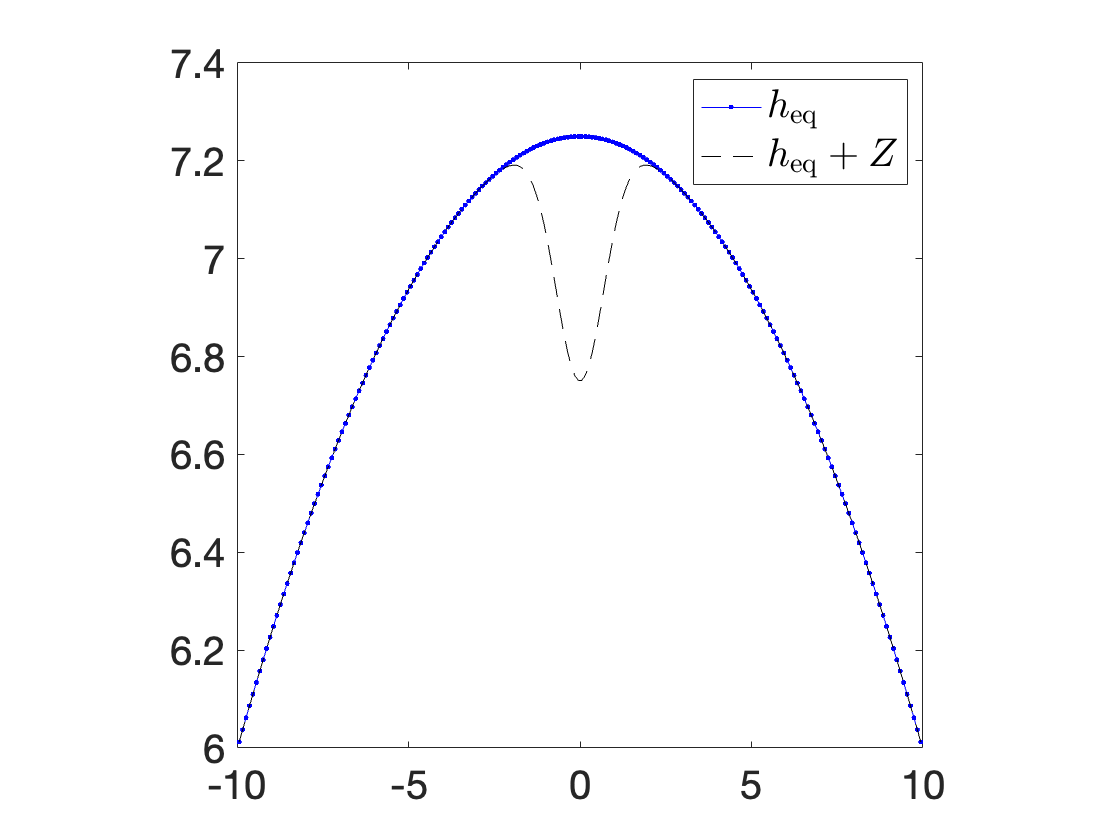}}
\caption{\sf Example 5: The 1-D slice along the $y$-axis of the steady-state fluid depth $h_{\rm eq}$ and fluid level $h_{\rm eq}+Z$.
\label{fig411}}
\end{figure}

We compute the numerical solutions on the computational domain $[10,10]\times[-10,10]$ using a uniform $100\times100$ mesh by both the WB
and NWB schemes until the final time $t=1$. The results reported in Table \ref{tab44} show that the WB scheme, as expected, preserves the
considered quasi 1-D steady state within the machine accuracy, while the NWB scheme fails to do so.
\begin{table}[ht!]
\begin{center}
\begin{tabular}{|c|c|c|c|c|}
\hline
Scheme&$\|h(\cdot,\cdot,5)-h_{\rm eq}\|_\infty$&$\|u(\cdot,\cdot,5)-u_{\rm eq}\|_\infty$&$\|v(\cdot,\cdot,5)-v_{\rm eq}\|_\infty$&
$\|a(\cdot,\cdot,5)-a_{\rm eq}\|_\infty$\\ \hline
WB &6.21e-15&5.27e-15&2.21e-15&2.66e-15\\
NWB&1.25e-03&4.99e-05&3.68e-03&6.22e-15\\
\hline
\end{tabular}
\end{center}
\caption{\sf Example 5 (capturing the steady state): Errors for the WB and NWB schemes.\label{tab44}}
\end{table}
Next, we examine the ability of the proposed WB scheme to capture a small perturbation of the studied steady state accurately. This is done
by perturbing the equilibrium fluid depth, namely, we take the following initial data for $h$:
\begin{equation*}
h(x,y,0)=h_{\rm eq}(x,y)+\begin{cases}0.05&\mbox{if }\sqrt{(x-2)^2+(y-2)^2}<\frac{1}{4},\\0&\textrm{otherwise}.\end{cases}
\end{equation*}

We compute the numerical solutions by both the WB and NWB schemes until the final time $t=1$ on a sequence of $100\times100$,
$200\times200$, and $400\times400$ uniform meshes. The obtained results ($h(x,y,1)-h_{\rm eq}(x,y)$) are plotted in Figure \ref{fig412},
where one can see that the NWB scheme fails to capture the correct solution on a coarse $100\times100$ mesh and even when the mesh is
refined the NWB solution contains visible oscillations. At the same time, the WB solution is oscillation-free even when on a coarse mesh. In
order to better illustrate the difference between the WB and NWB results, we also plot two 1-D slices $h(x,2,1)-h_{\rm eq}(x,2)$ and
$h(2,y,1)-h_{\rm eq}(2,y)$; see Figure \ref{fig413}.
\begin{figure}[ht!]
\centerline{\includegraphics[trim=1.0cm 0.6cm 0.3cm 1.2cm, clip, width=5.75cm]{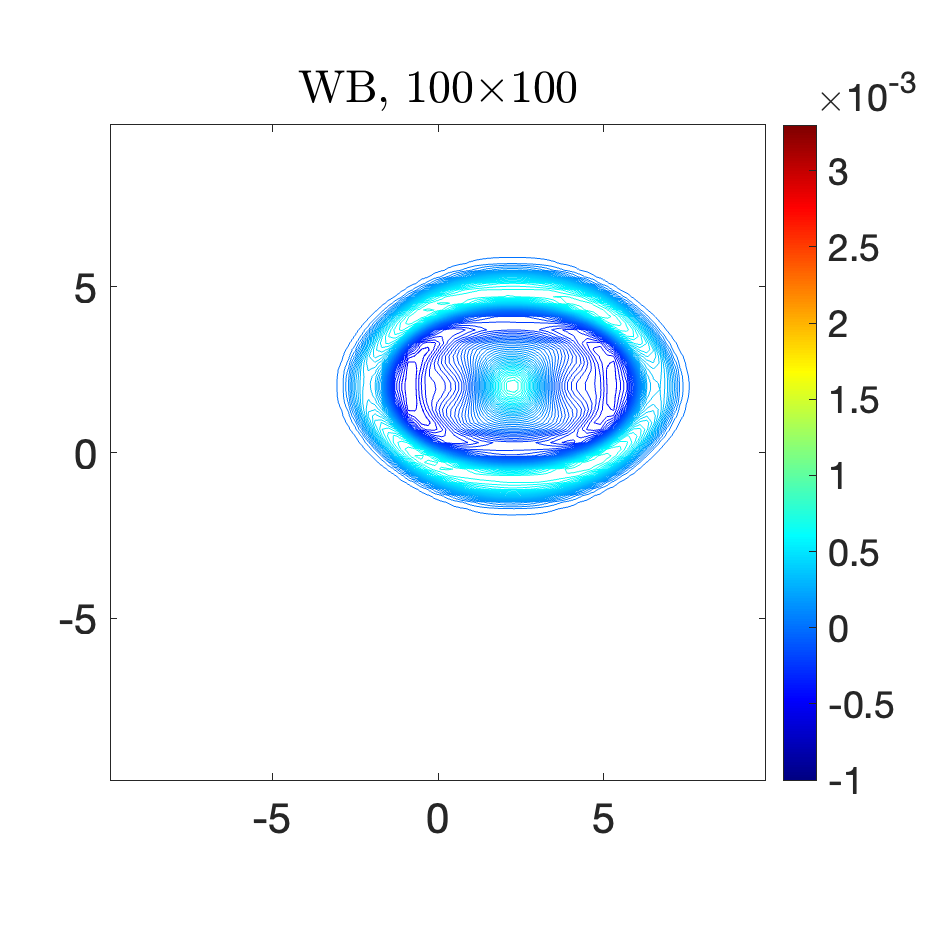}
            \includegraphics[trim=1.0cm 0.6cm 0.3cm 1.2cm, clip, width=5.75cm]{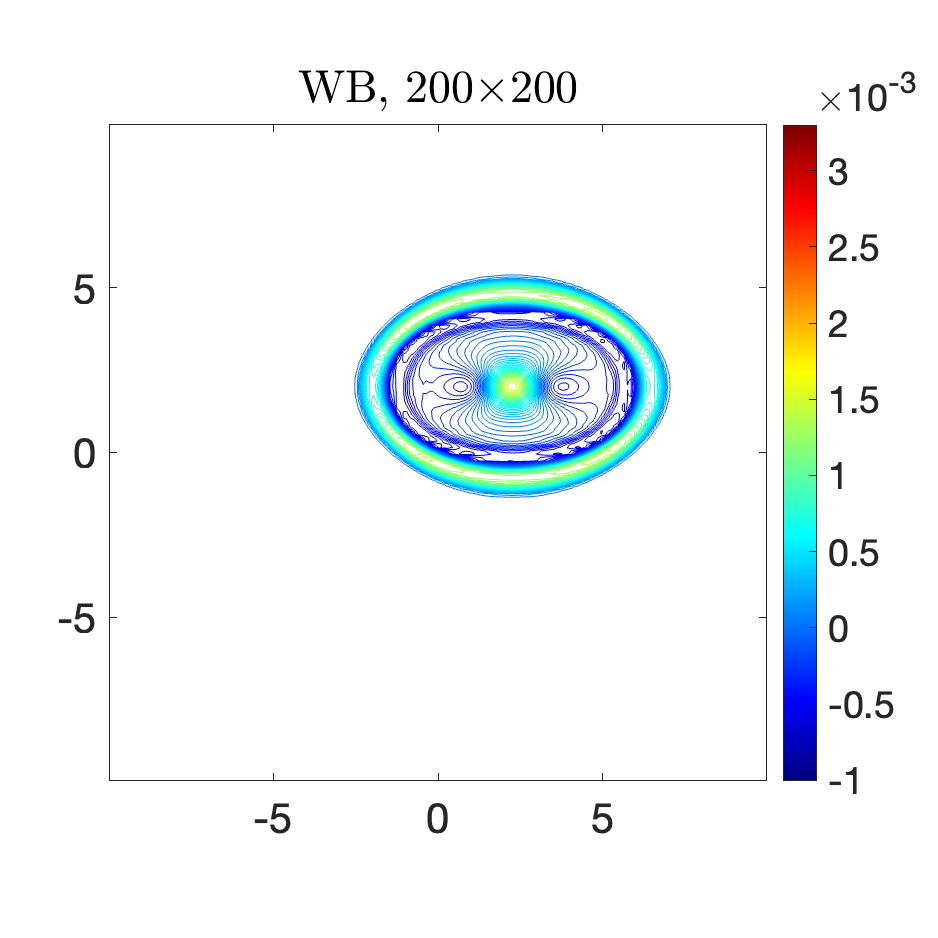}
            \includegraphics[trim=1.0cm 0.6cm 0.3cm 1.2cm, clip, width=5.75cm]{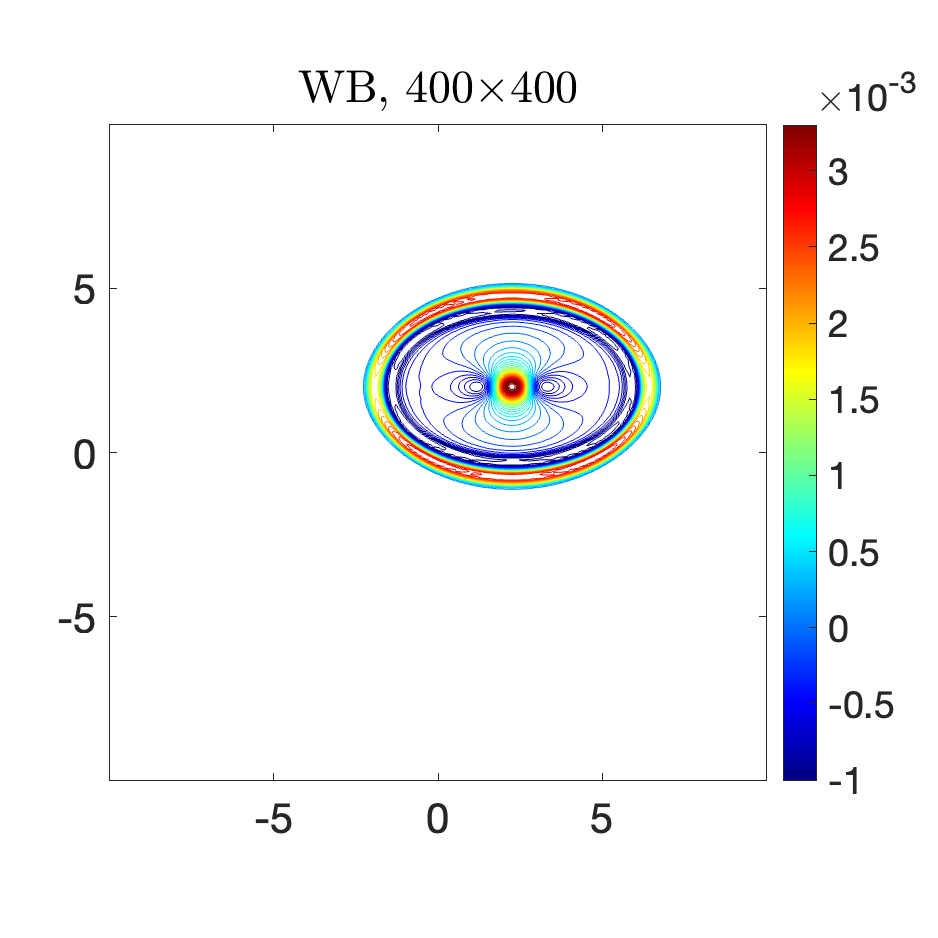}}
\centerline{\includegraphics[trim=1.0cm 0.6cm 0.3cm 1.2cm, clip, width=5.75cm]{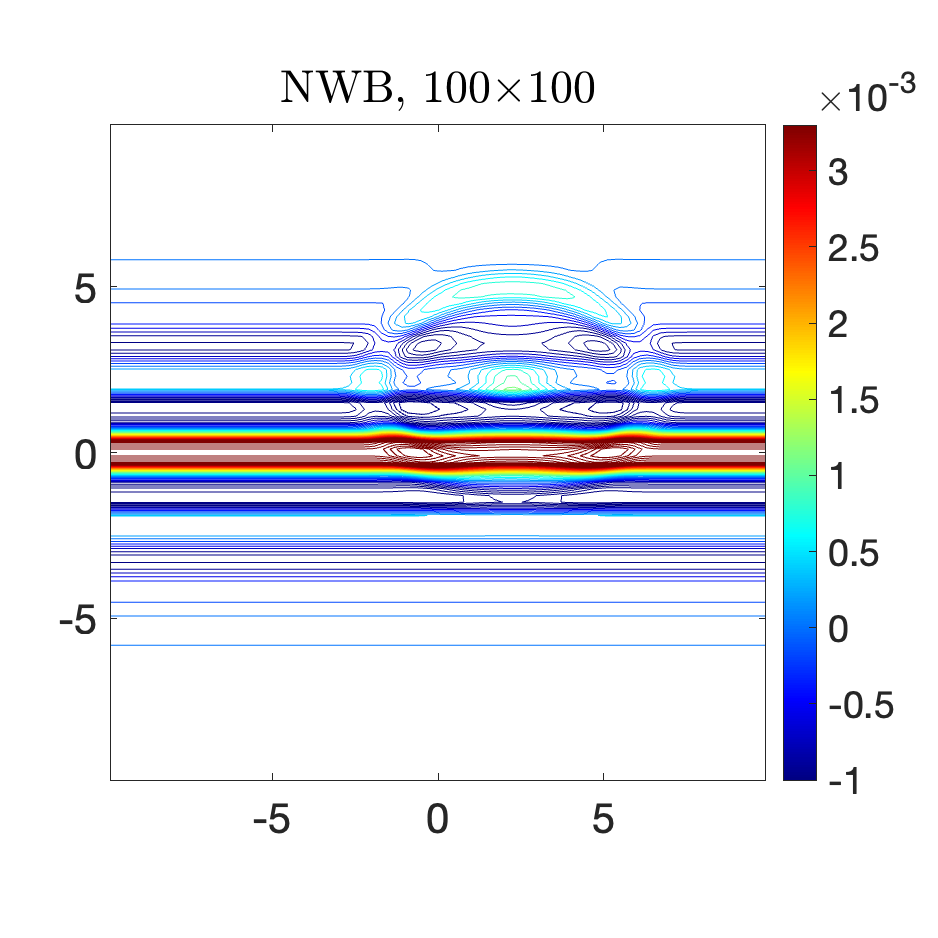}
            \includegraphics[trim=1.0cm 0.6cm 0.3cm 1.2cm, clip, width=5.75cm]{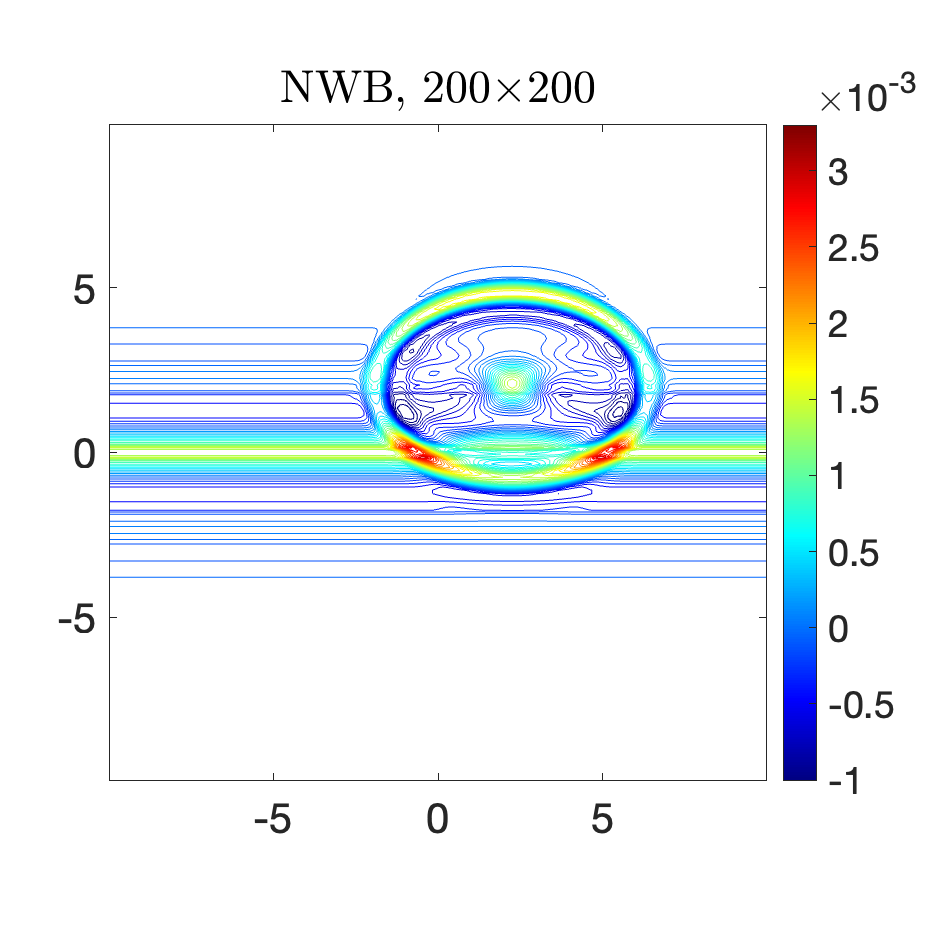}
            \includegraphics[trim=1.0cm 0.6cm 0.3cm 1.2cm, clip, width=5.75cm]{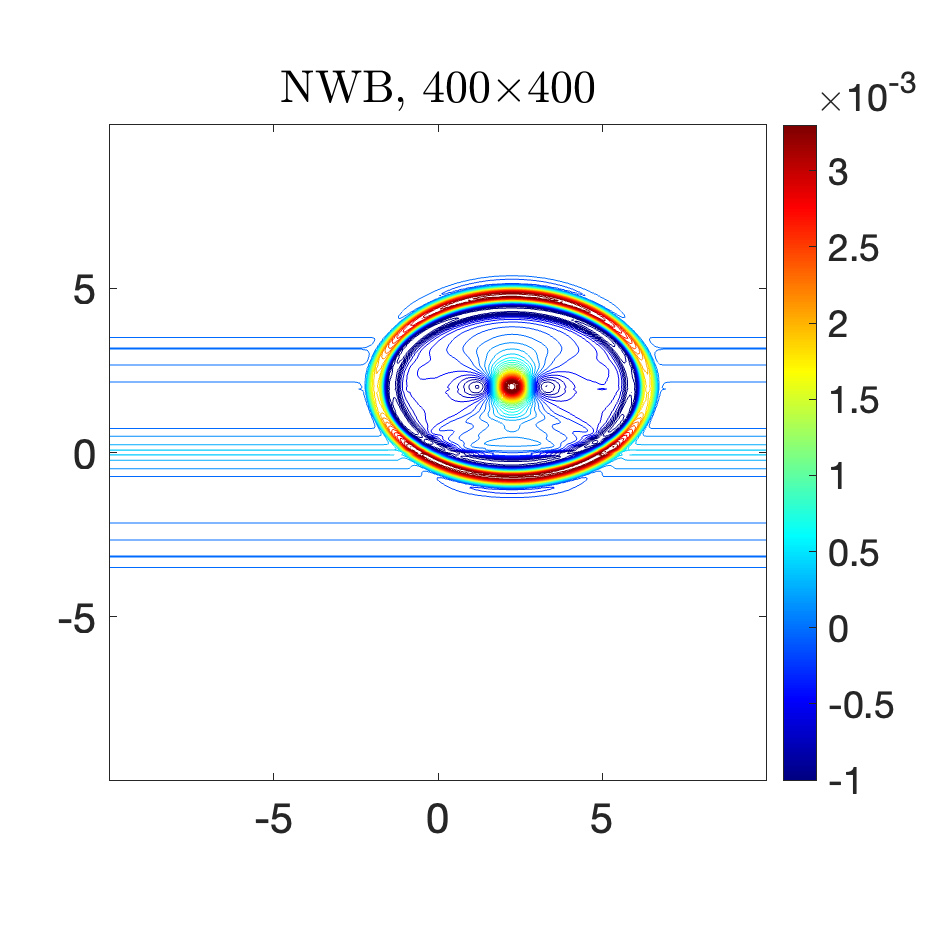}}
\caption{\sf Example 5 (small perturbation of the steady state): $h(x,y,1)-h_{\rm eq}(x,y)$ computed by both the WB (top row) and NWB
(bottom row) schemes on $100\times100$ (left column), $200\times200$ (middle column), and $400\times400$ (right column) uniform meshes.
\label{fig412}}
\end{figure}
\begin{figure}[ht!]
\centerline{\includegraphics[trim=1.2cm 0.8cm 1.1cm 0.3cm, clip, width=5.4cm]{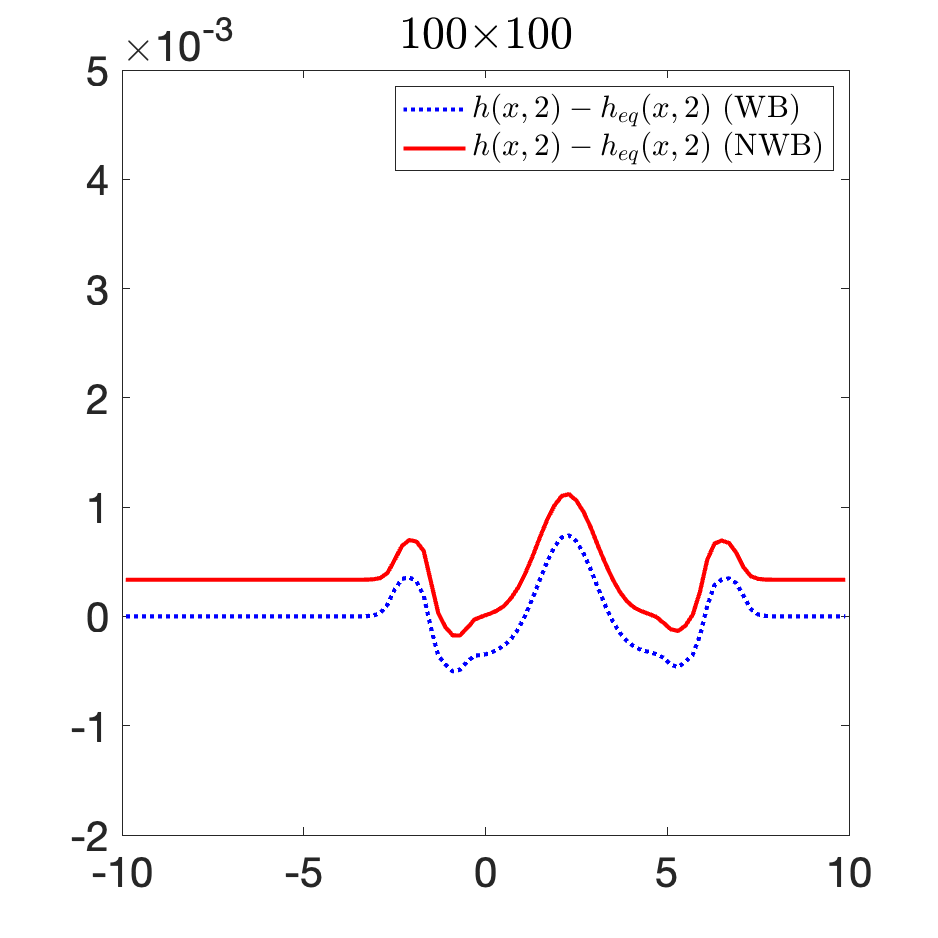}\hspace*{0.4cm}
            \includegraphics[trim=1.2cm 0.8cm 1.1cm 0.3cm, clip, width=5.4cm]{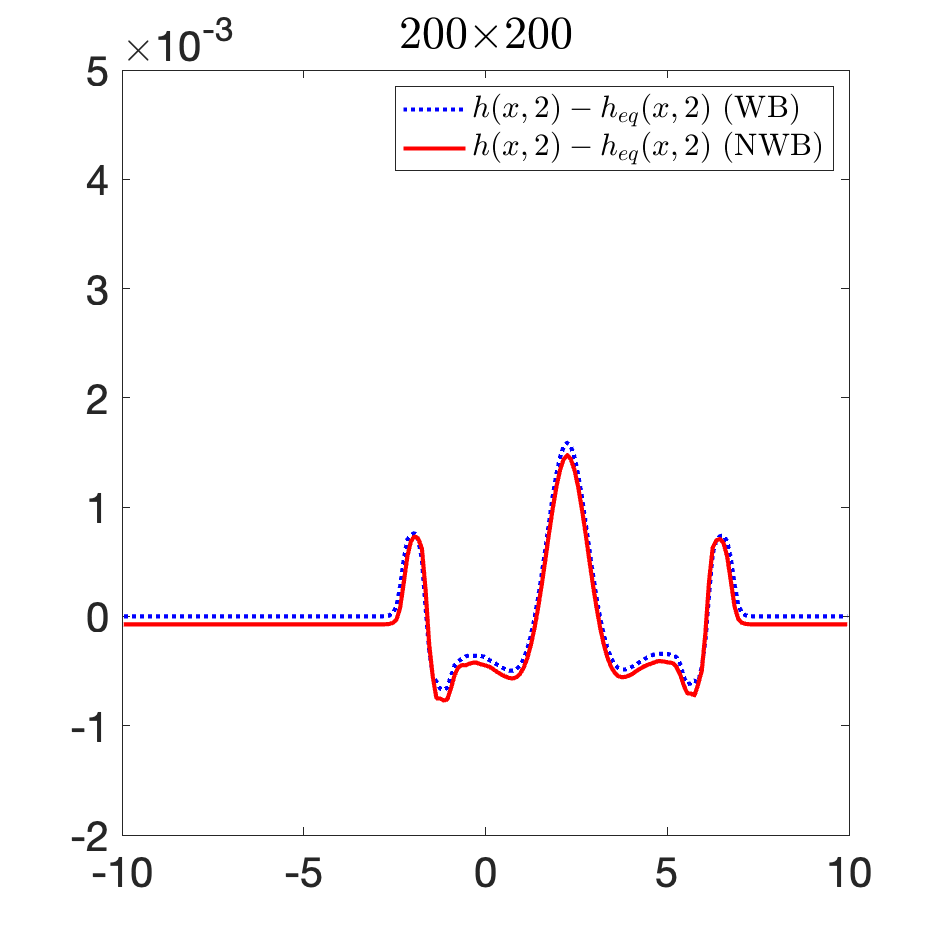}\hspace*{0.4cm}
            \includegraphics[trim=1.2cm 0.8cm 1.1cm 0.3cm, clip, width=5.4cm]{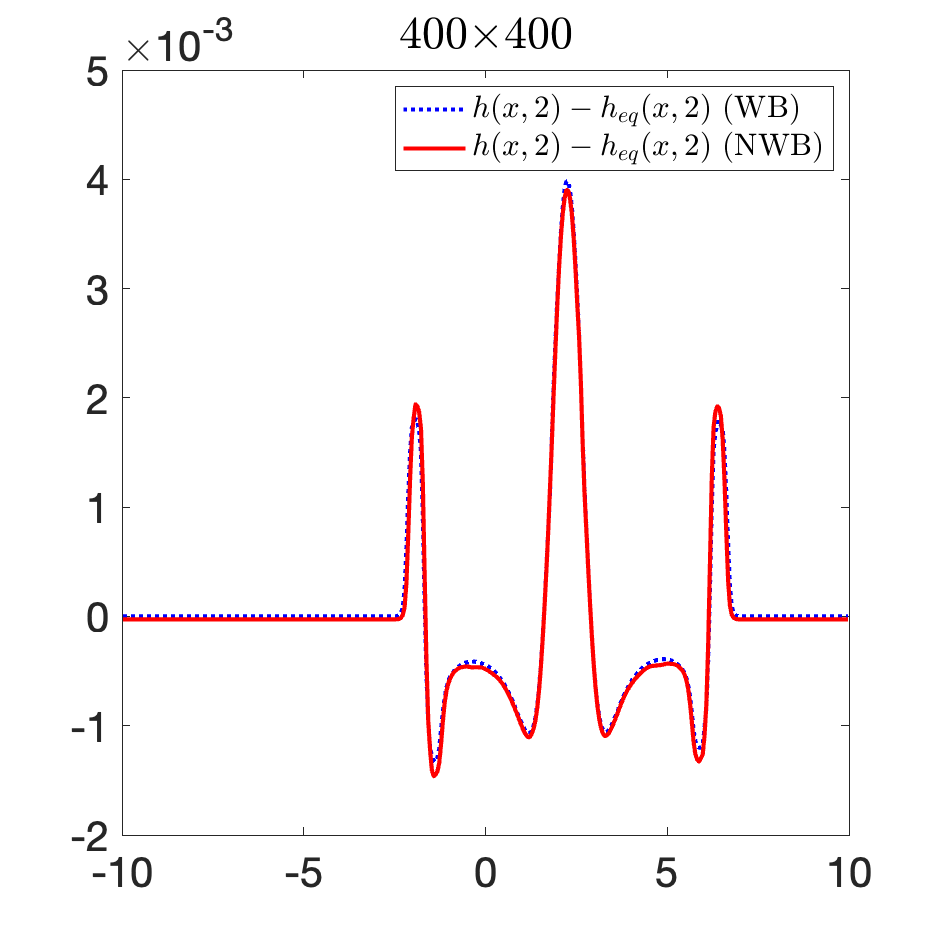}}
\vskip7pt
\centerline{\includegraphics[trim=1.2cm 0.8cm 1.1cm 0.3cm, clip, width=5.4cm]{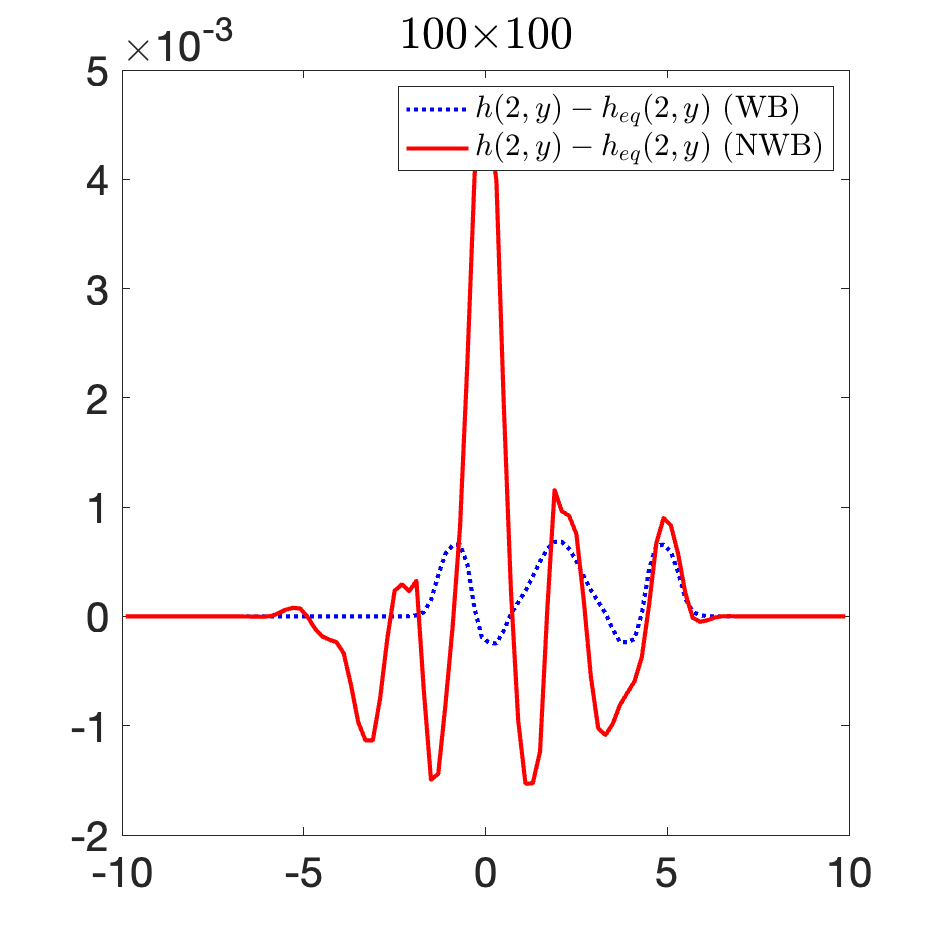}\hspace*{0.4cm}
            \includegraphics[trim=1.2cm 0.8cm 1.1cm 0.3cm, clip, width=5.4cm]{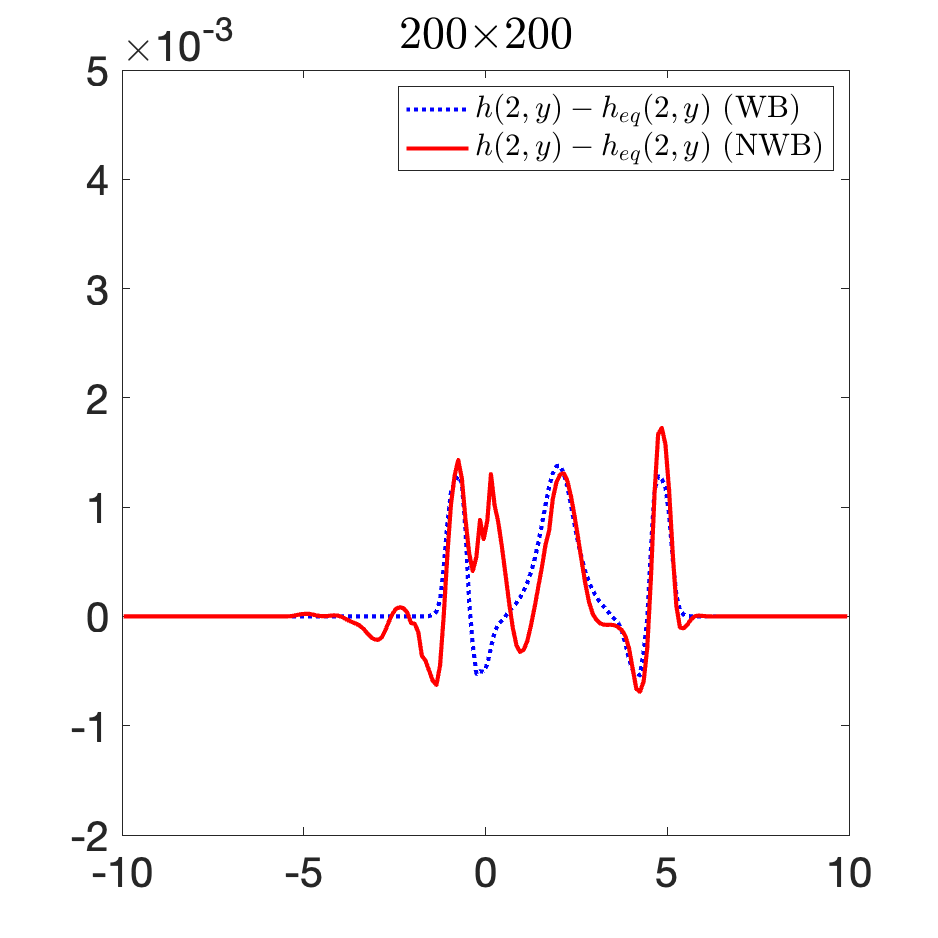}\hspace*{0.4cm}
            \includegraphics[trim=1.2cm 0.8cm 1.1cm 0.3cm, clip, width=5.4cm]{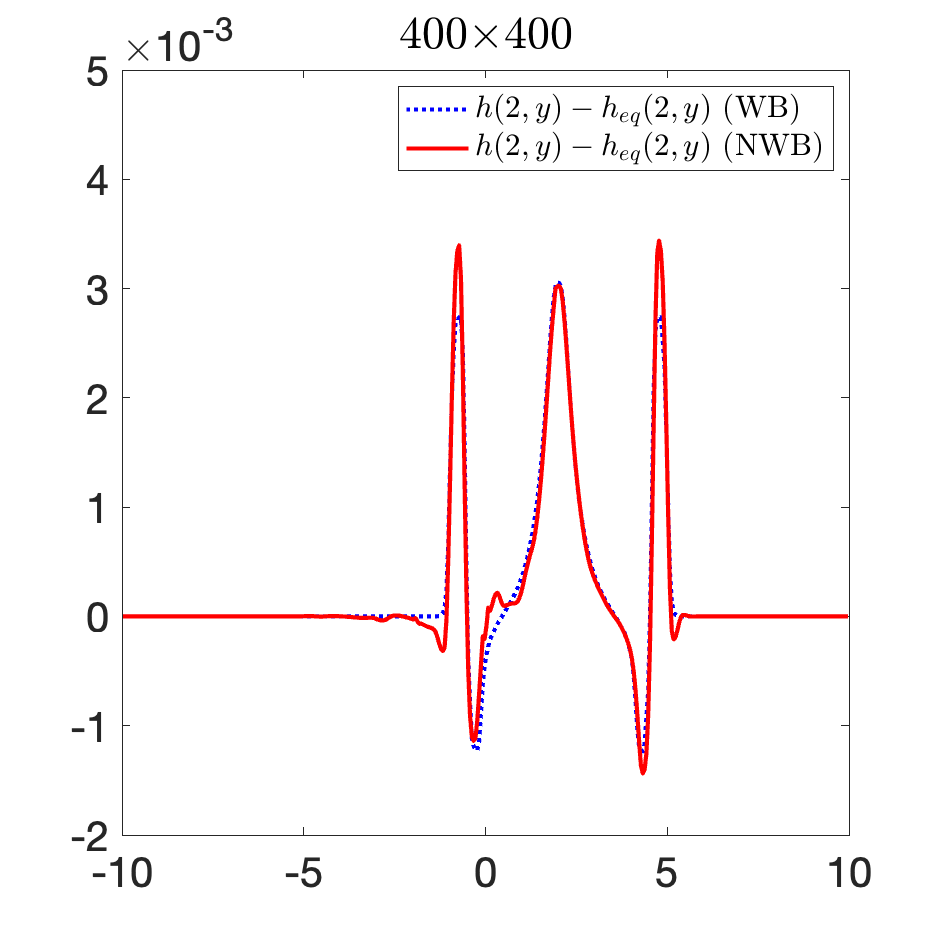}}
\caption{\sf Example 5 (small perturbation of the steady state): 1-D slices $h(x,2,1)-h_{\rm eq}(x,2)$ (top row) and
$h(2,y,1)-h_{\rm eq}(2,y)$ (bottom row) computed by both the WB and NWB schemes on $100\times100$ (left column), $200\times200$ (middle
column), and $400\times400$ (right column) uniform meshes.\label{fig413}}
\end{figure}

\subsubsection*{Example 6---Magneto-Cyclo-Geostrophic Adjustment of a Circular Magnetic Anomaly}
In this example, we test the process of magneto-cyclo-geostrophic adjustment of an initial configuration with a circular magnetic field
only, that is, with a flat $h$ and zero velocity. According to the mechanism of magneto-cyclo-geostrophic adjustment explained in
\S\ref{sec421}, it is expected to evolve towards a vortex in magneto-cyclo-geostrophic equilibrium by emitting outward-traveling
inertia-gravity waves. Notice that there are no Alfv\'en waves that could propagate in this direction.

We consider the following initial conditions:
\begin{equation*}
h(x,y,0)\equiv1,\quad u(x,y,0)=v(x,y,0)\equiv0,\quad a(x,y,0)=2ye^{-(x^2+y^2)},\quad b(x,y,0)=-2xe^{-(x^2+y^2)},
\end{equation*}
with the constant Coriolis parameter $f(y)\equiv1$ and flat bottom topography $Z(y)\equiv0$ on the computational domain
$[-10,10]\times[-10,10]$ subject to the outflow boundary conditions.

We use the WB scheme to compute the solution until $t=8$ on a $400\times400$ uniform mesh and plot the obtained $h$, $|\bm u|$, and
$|\bm b|$ in Figure \ref{fig414}, where one can observe the expected structures: a circular wave train of inertia-gravity waves and a
central vortex. We also present four-time snapshots at $t=2$, 4, 6, and 8 of the vorticity $\zeta:=v_x-u_y$ and velocity divergence
$u_x+v_y$ in Figures \ref{fig415} and \ref{fig416}, respectively. They confirm the presence of the central vortex and outgoing
inertia-gravity waves.
\begin{figure}[ht!]
\centerline{\includegraphics[trim=1.0cm 1.7cm 0.6cm 1.1cm, clip, width=5.75cm]{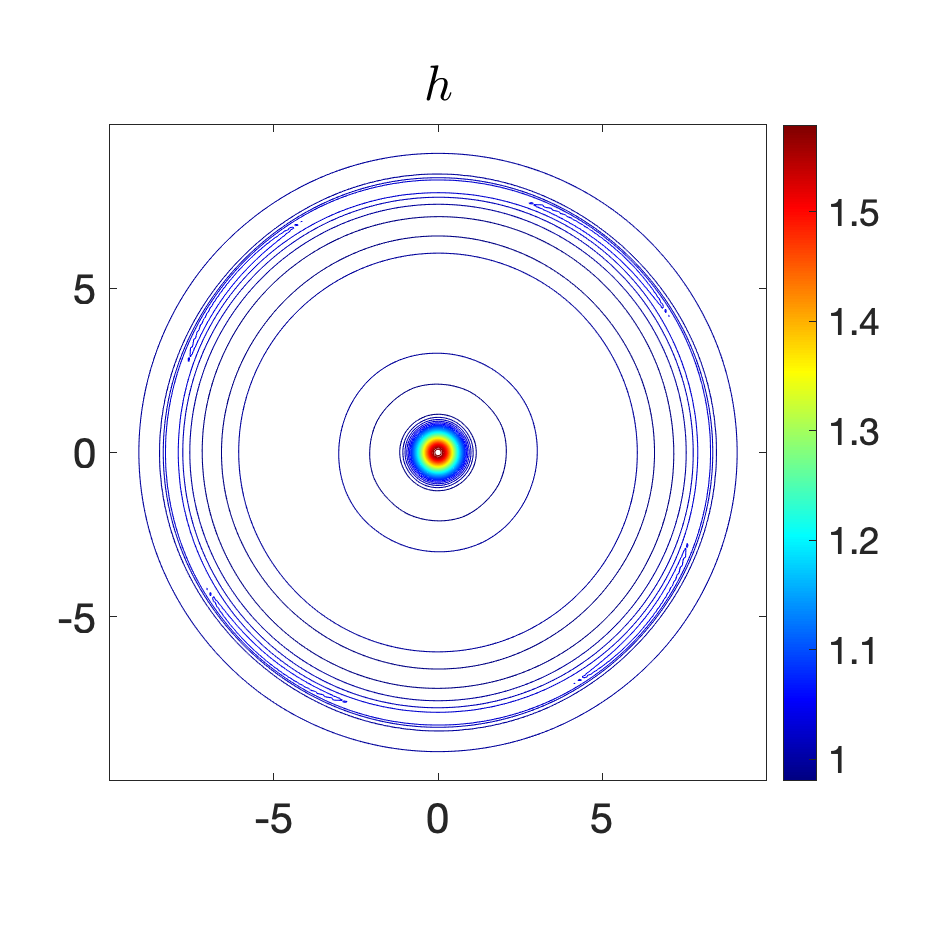}
            \includegraphics[trim=1.0cm 1.7cm 0.6cm 1.1cm, clip, width=5.75cm]{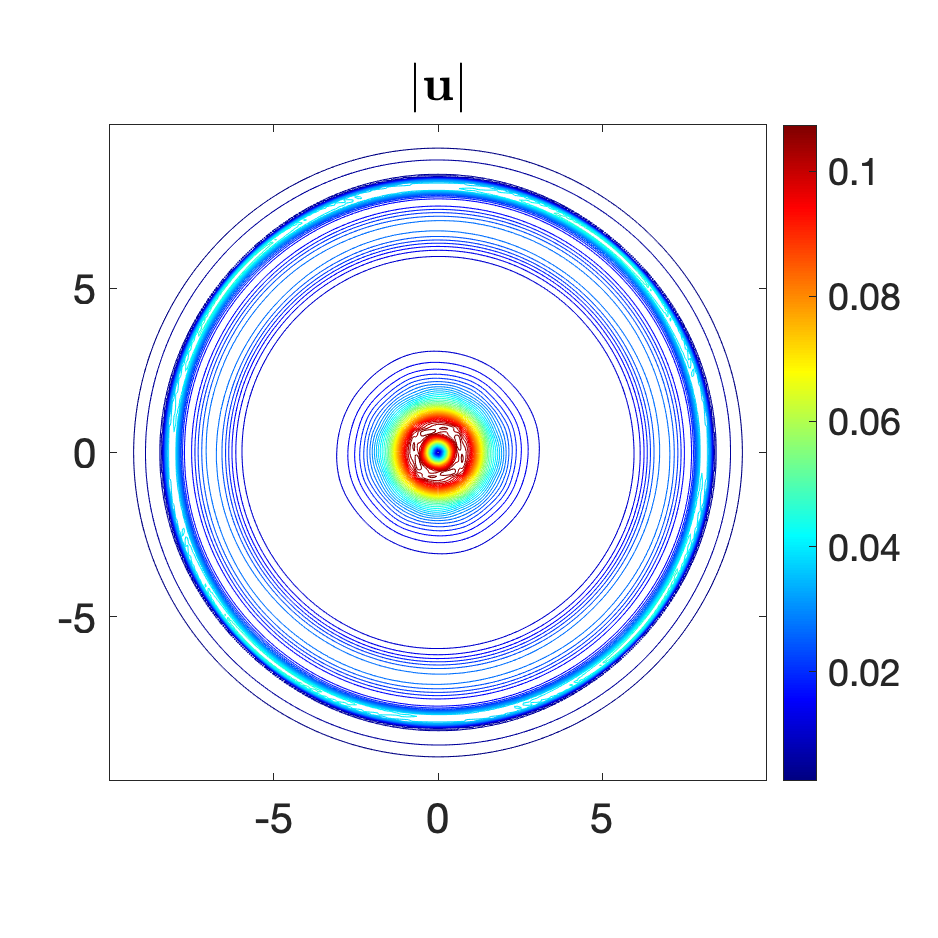}
            \includegraphics[trim=1.0cm 1.7cm 0.6cm 1.1cm, clip, width=5.75cm]{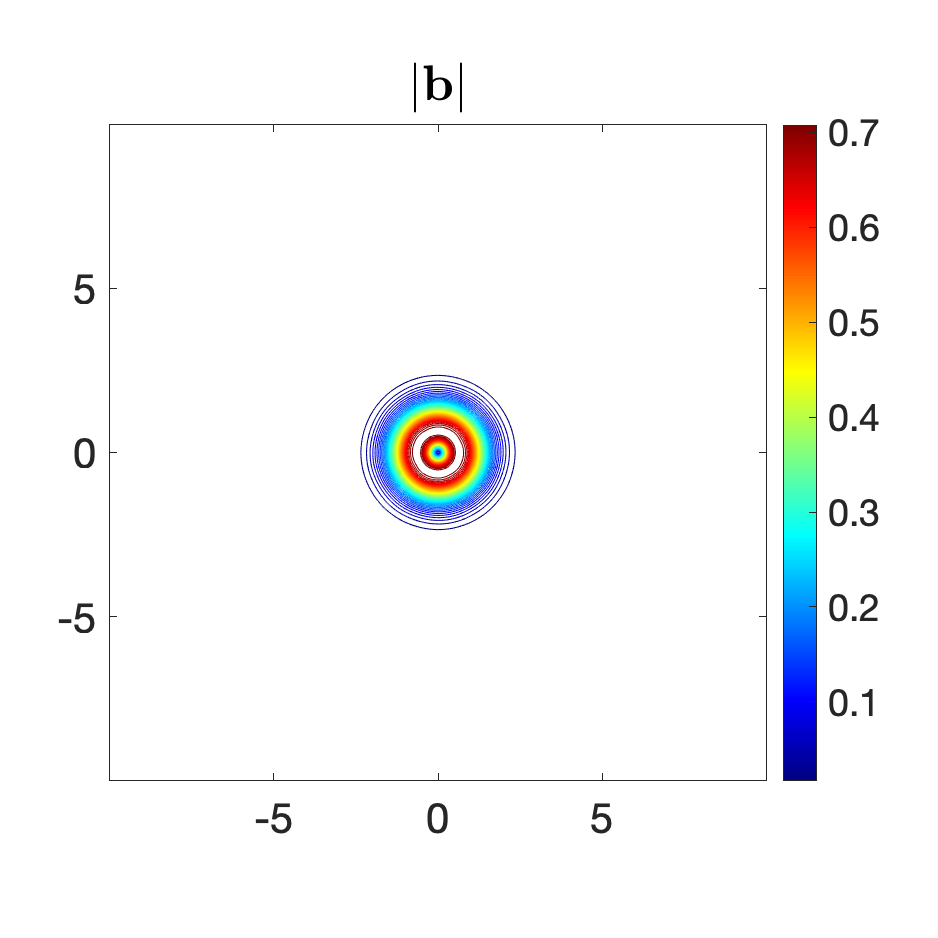}}
\caption{\sf Example 6: Contour plots of $h$, $|\bm u|$, and $|\bm b|$ with 40 equally spaced contours each at $t=8$.\label{fig414}}
\end{figure}
\begin{figure}[ht!]
\centerline{\includegraphics[trim=1.0cm 1.7cm 0.8cm 1.1cm, clip, width=4.4cm]{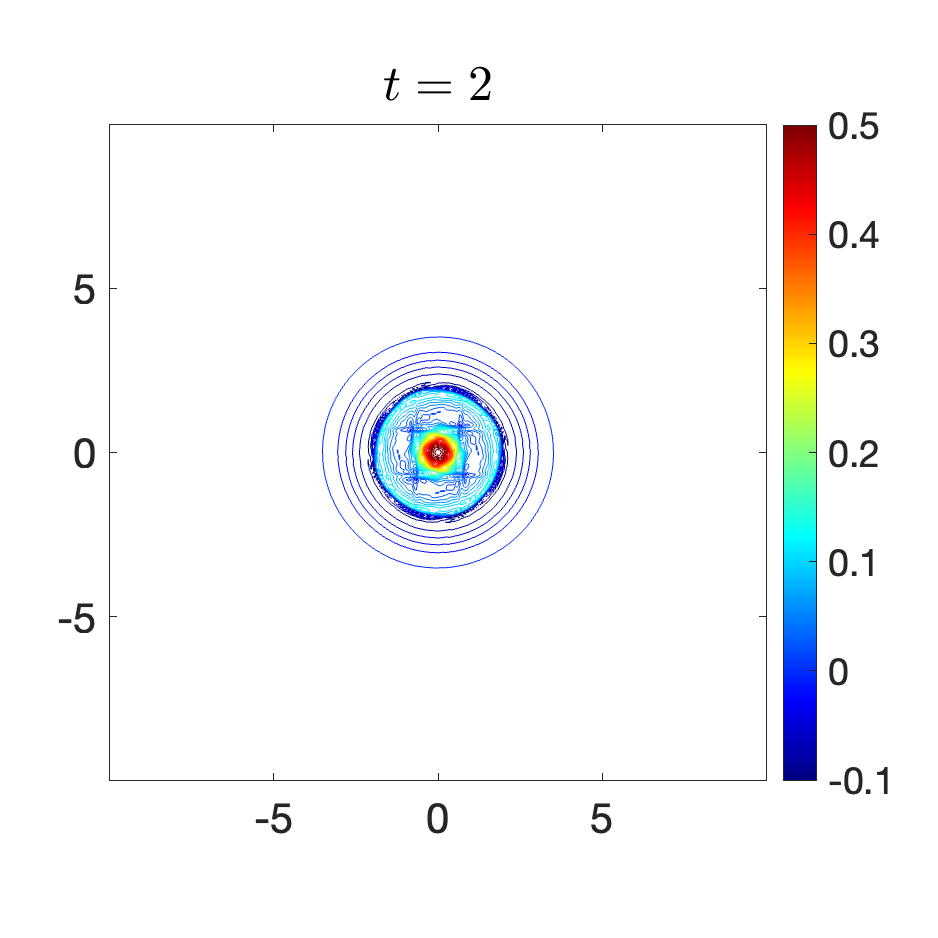}
            \includegraphics[trim=1.0cm 1.7cm 0.8cm 1.1cm, clip, width=4.4cm]{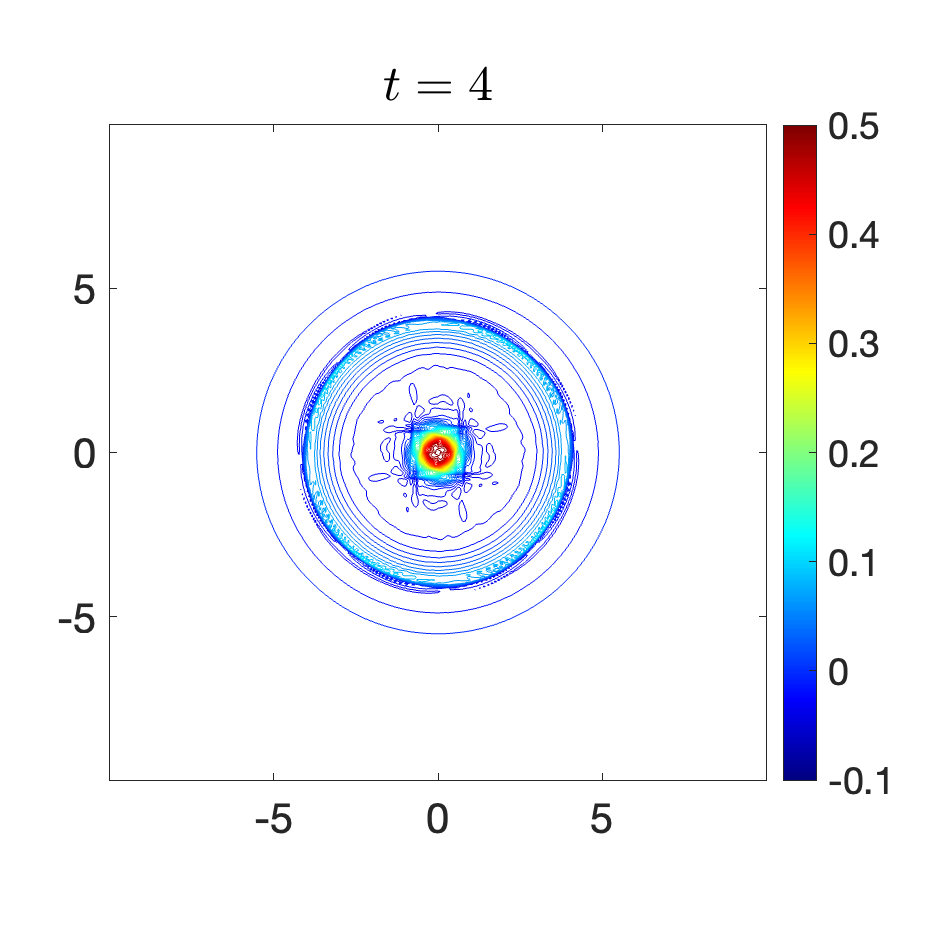}
            \includegraphics[trim=1.0cm 1.7cm 0.8cm 1.1cm, clip, width=4.4cm]{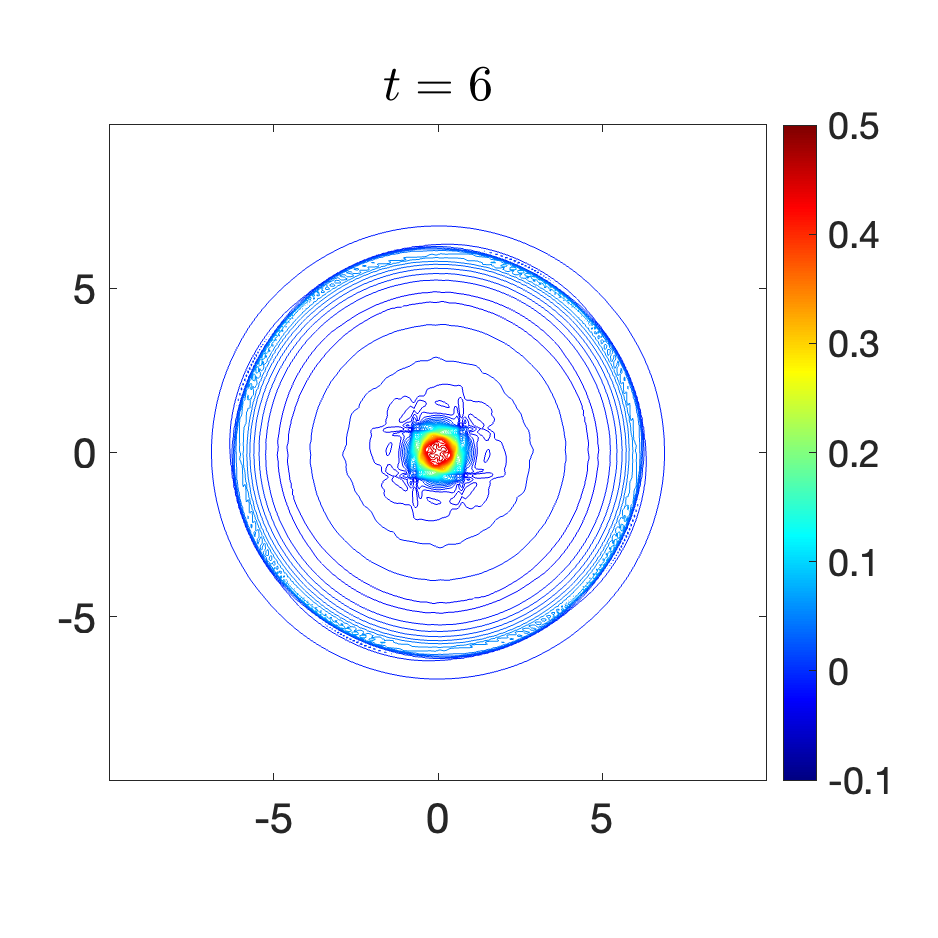}
            \includegraphics[trim=1.0cm 1.7cm 0.8cm 1.1cm, clip, width=4.4cm]{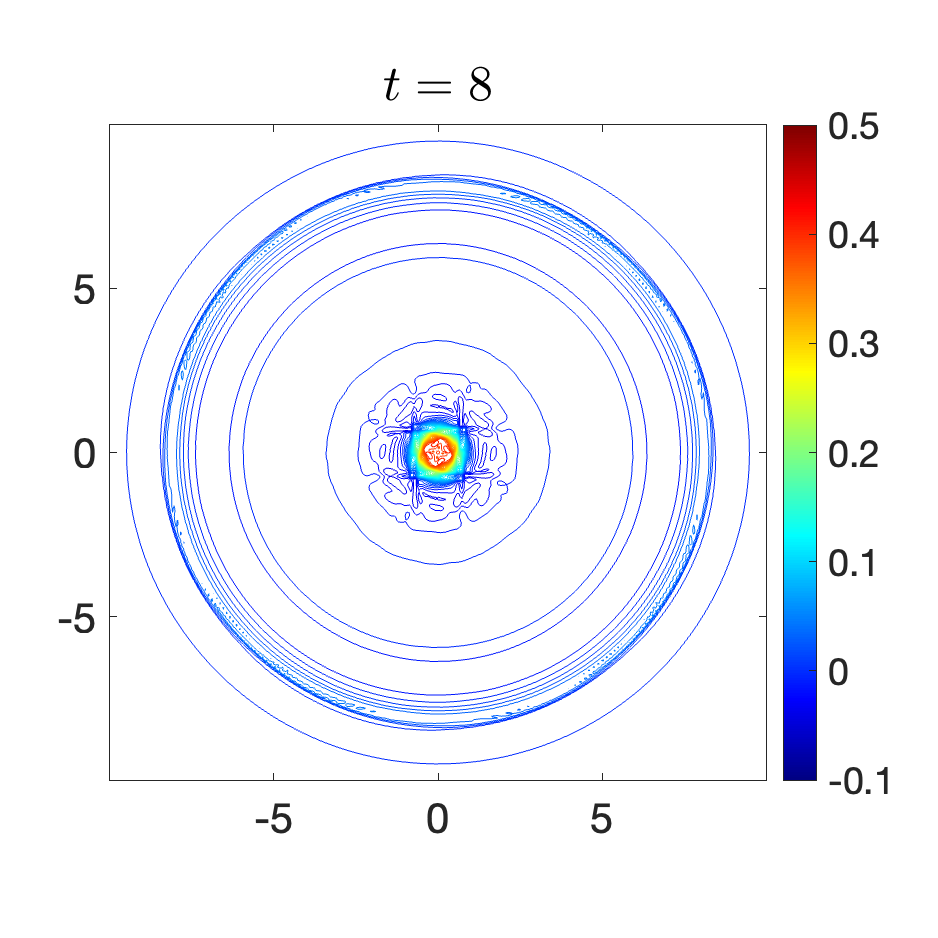}}
\caption{\sf Example 6: Time snapshots of the vorticity $\zeta$: contour plots with 40 equally spaced contours each.\label{fig415}}
\end{figure}
\begin{figure}[ht!]
\centerline{\includegraphics[trim=1.0cm 1.7cm 0.4cm 1.1cm, clip, width=4.4cm]{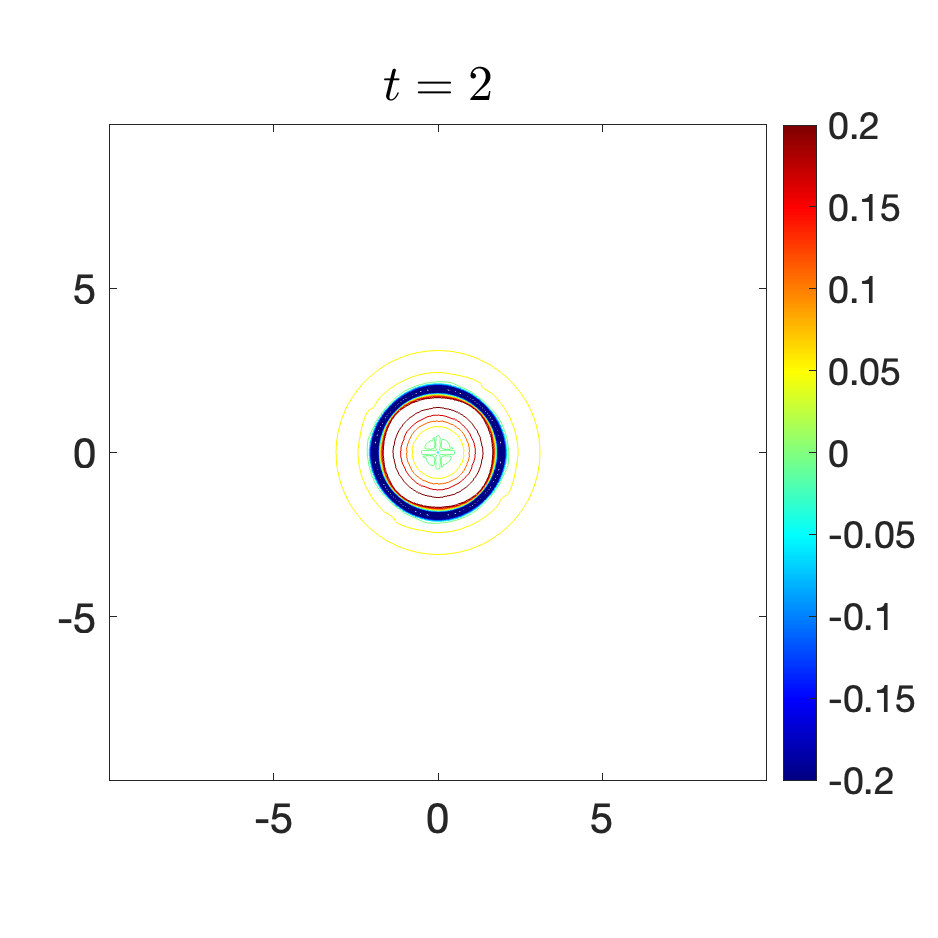}
            \includegraphics[trim=1.0cm 1.7cm 0.4cm 1.1cm, clip, width=4.4cm]{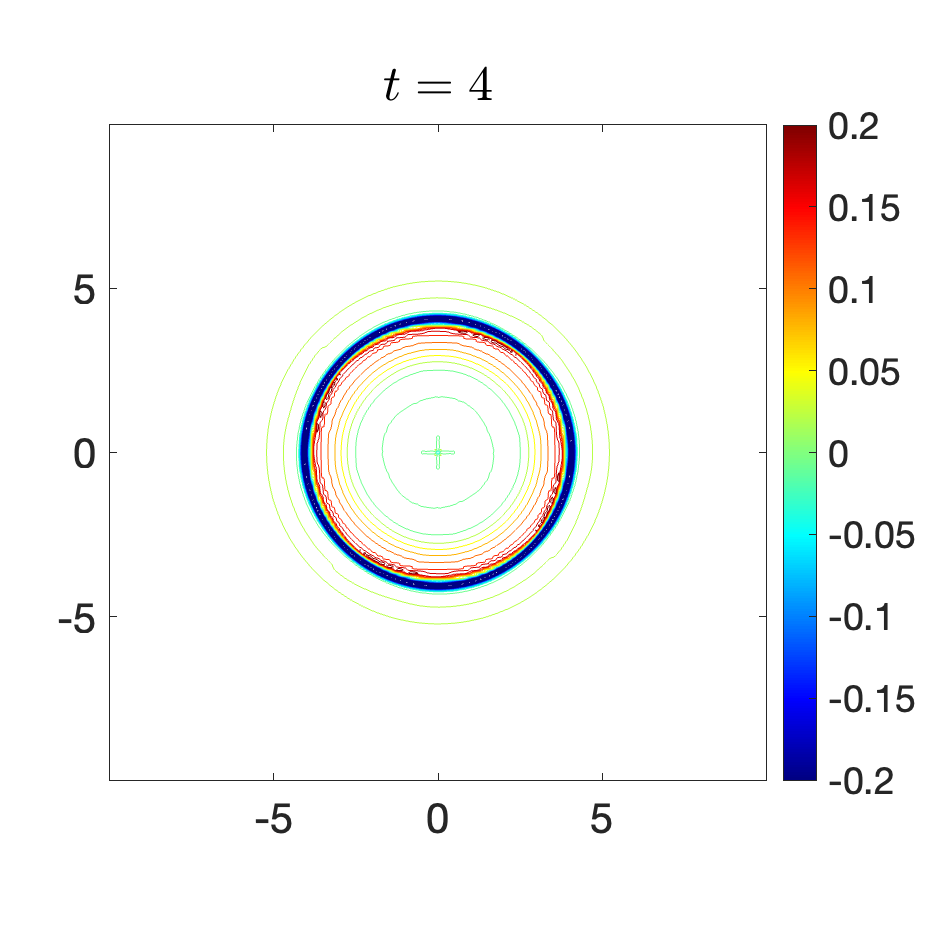}
            \includegraphics[trim=1.0cm 1.7cm 0.4cm 1.1cm, clip, width=4.4cm]{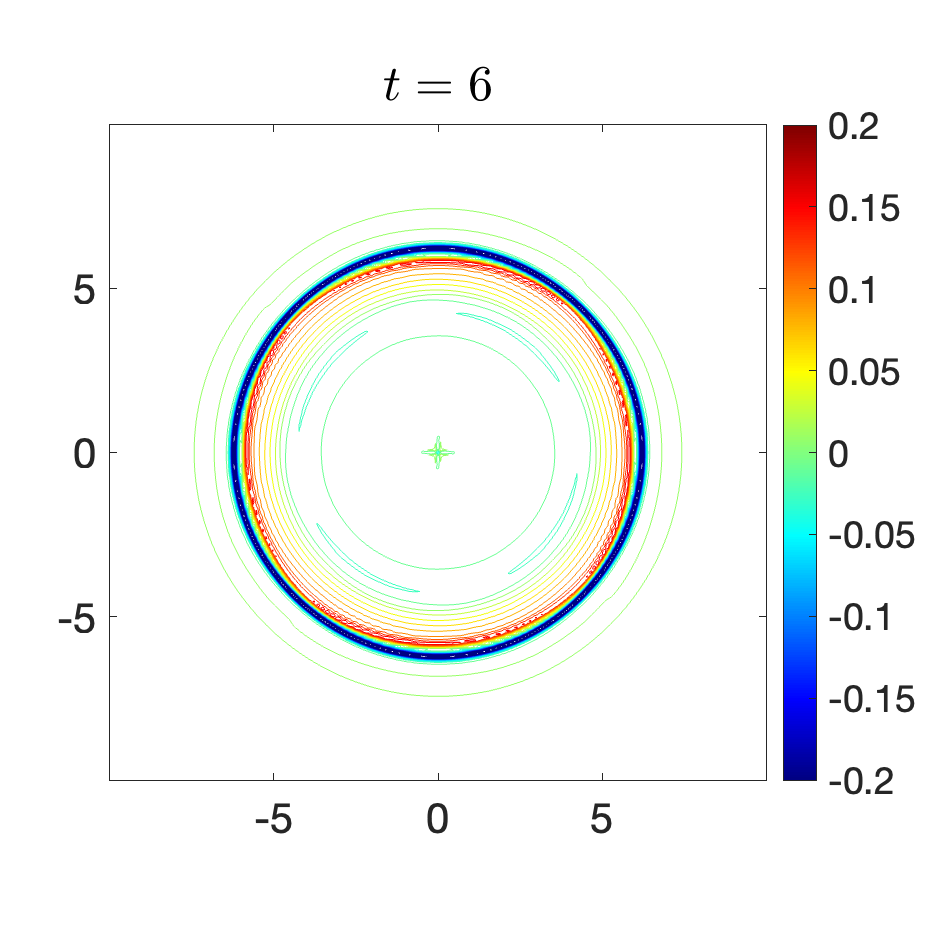}
            \includegraphics[trim=1.0cm 1.7cm 0.4cm 1.1cm, clip, width=4.4cm]{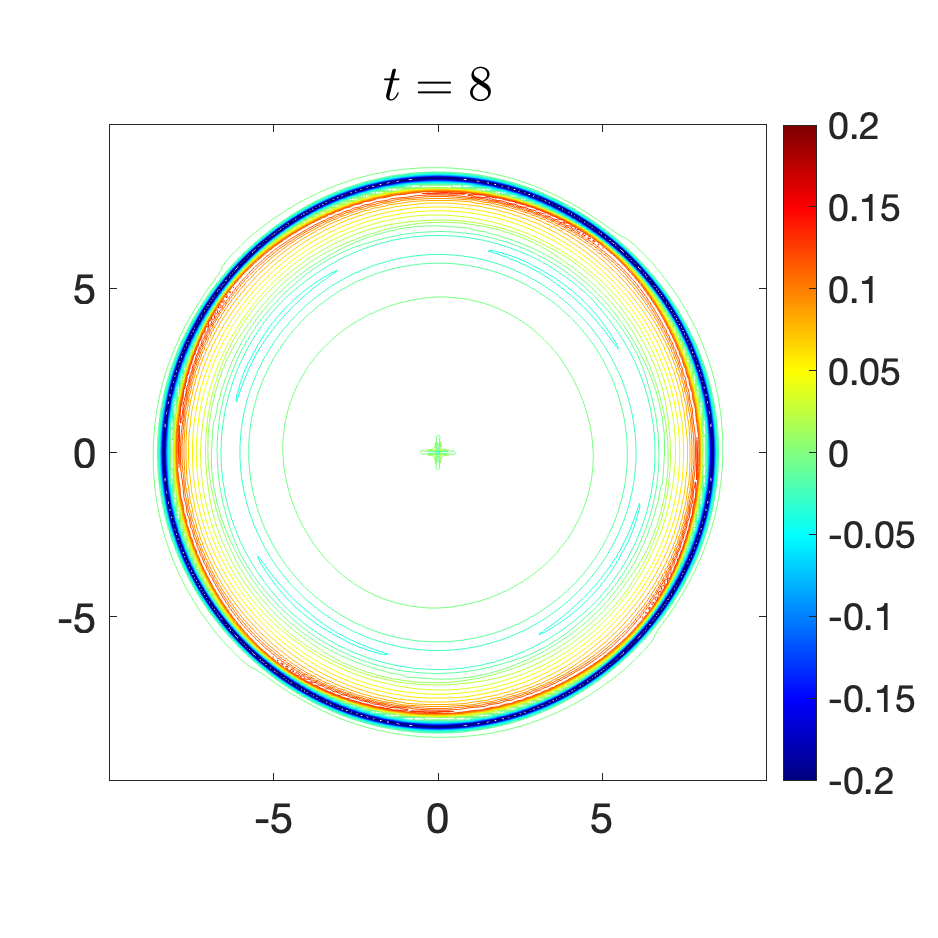}}
\caption{\sf Example 6: Time snapshots of the velocity divergence $u_x+v_y$: contour plots with 40 equally spaced contours each.
\label{fig416}}
\end{figure}

\subsubsection*{Example 7---Magneto-Cyclo-Geostrophic Adjustment of a Balanced Magnetized Vortex}
In this example, we continue to test the fundamental process of magneto-cyclo-geostrophic adjustment by adding the effects of topography and
exploring the influence of a magnetic field imposed onto an initially balanced vortex. We take a constant Coriolis parameter $f(y)\equiv2$, a
Gaussian-shaped axisymmetric bottom topography $Z=\frac{1}{20}e^{-r^2}$, and consider the following initial conditions that correspond to a
magnetized balanced vortex:
\begin{equation*}
\begin{aligned}
&h(x,y,0)\equiv1,\quad u(x,y,0)=-V(r)\sin\theta,\quad v(x,y,0)=V(r)\cos\theta,\\
&a(x,y,0)=-1.1e^{-r}\sin\theta,\quad b(x,y,0)=1.1e^{-r}\cos\theta,
\end{aligned}
\end{equation*} 
where the initial velocity magnitude $V(r)$ is found by solving the quadratic cyclo-geostrophic balance equation, which reads as
$$
\frac{V^2}{r}+fV=g\frac{{\rm d}}{{\rm d}r}(h+Z).
$$
We conduct the computations in the square $[-10,10]\times[-10,10]$ and set outward boundary conditions at the domain boundaries. We use the 
WB scheme to compute the solution until the final time $t=8$ on a $400\times400$ uniform mesh and plot time snapshots of the obtained fluid 
level $h+Z$ and velocity divergence $u_x+v_y$ at $t=2$, 4, 6, and 8 in Figure \ref{fig421}. We also present graphs of $|\bm u|$ and
$|\bm b|$ at the final time $t=8$ in Figure \ref{fig422}. Like in the preceding magneto-geostrophic adjustment problem (Example 6), one can
observe a circular inertia gravity wave train and the remaining central vortex. The 1-D slices $h(x,0,t)$, $|\bm u(x,0,t)|$, and
$|\bm b(x,0,t)|$ at $t=0$ and 8 are displayed in Figure \ref{fig418f}. They clearly show the evolution towards an equilibrium state
verifying \eref{magcycgeo}.
\begin{figure}[ht!]
\centerline{\includegraphics[trim=1.0cm 1.7cm 0.6cm 1.1cm, clip, width=4.4cm]{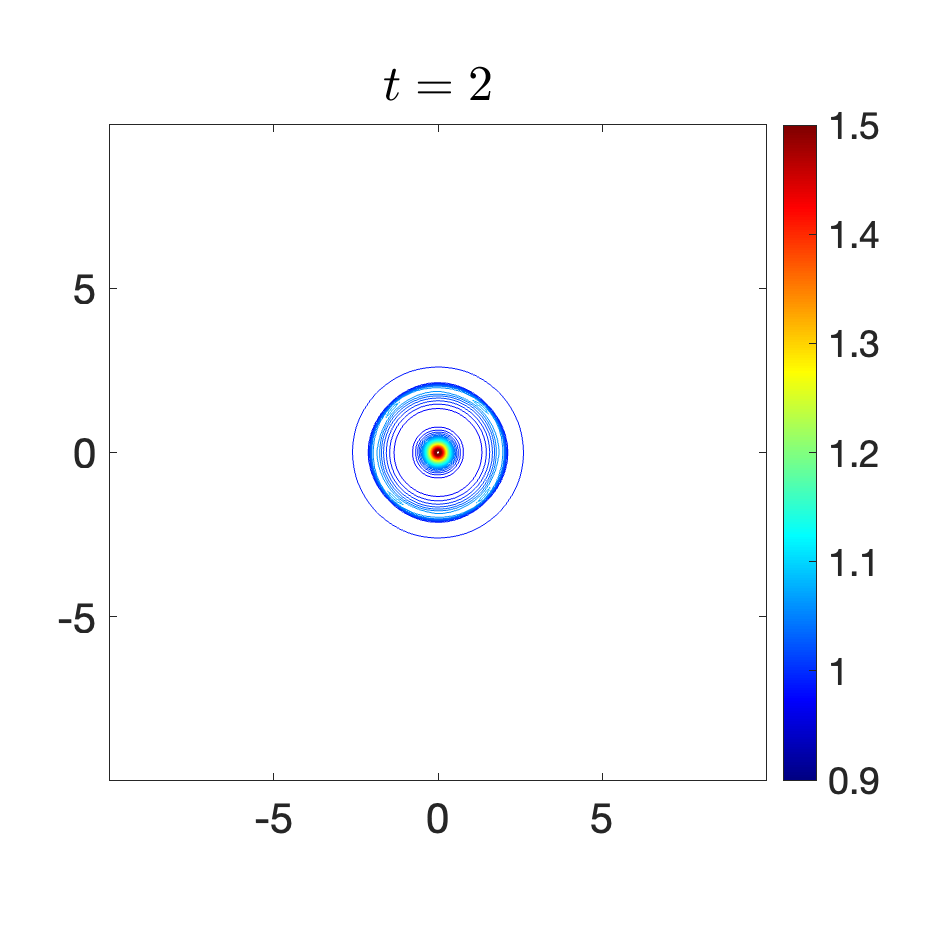}
            \includegraphics[trim=1.0cm 1.7cm 0.6cm 1.1cm, clip, width=4.4cm]{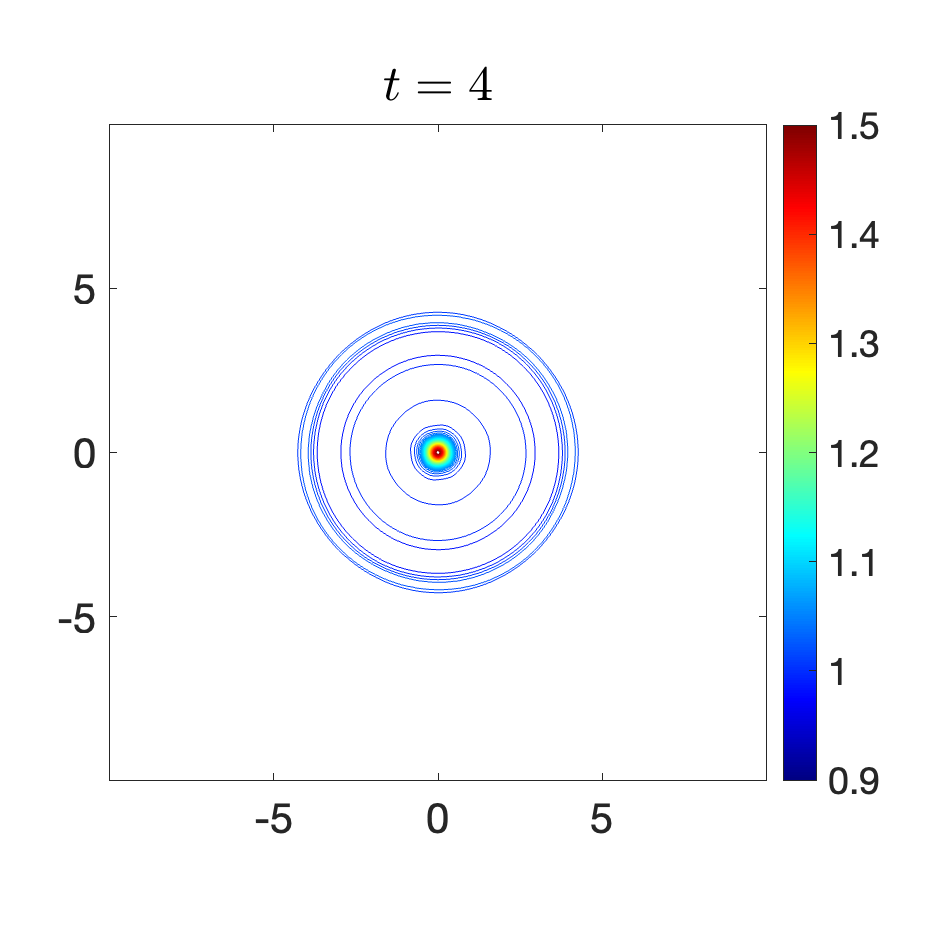}
            \includegraphics[trim=1.0cm 1.7cm 0.6cm 1.1cm, clip, width=4.4cm]{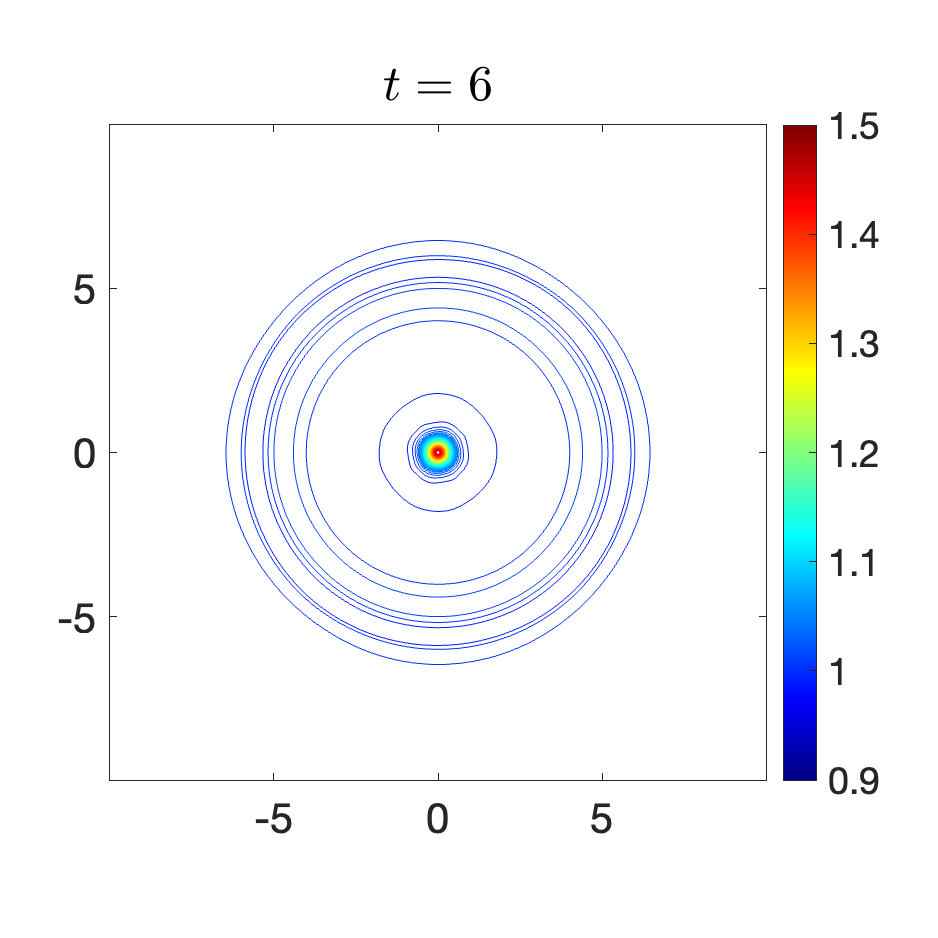}
            \includegraphics[trim=1.0cm 1.7cm 0.6cm 1.1cm, clip, width=4.4cm]{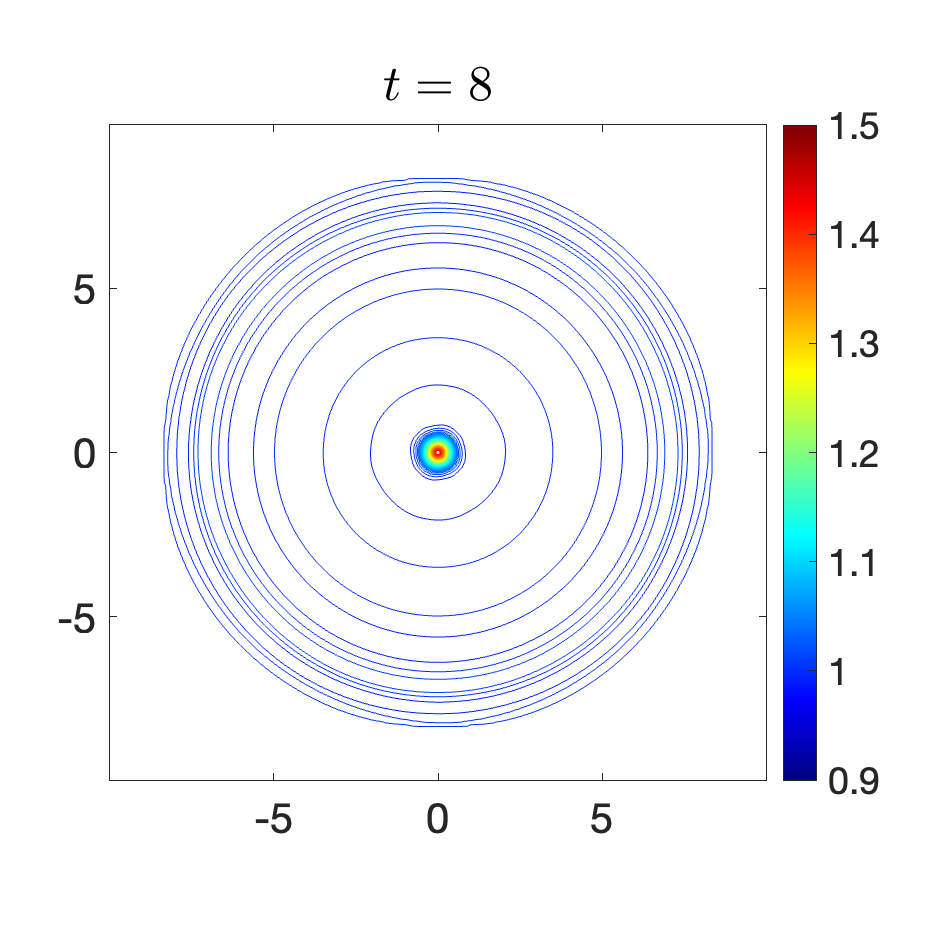}}
\vskip7pt
\centerline{\includegraphics[trim=1.0cm 1.7cm 0.4cm 1.1cm, clip, width=4.45cm]{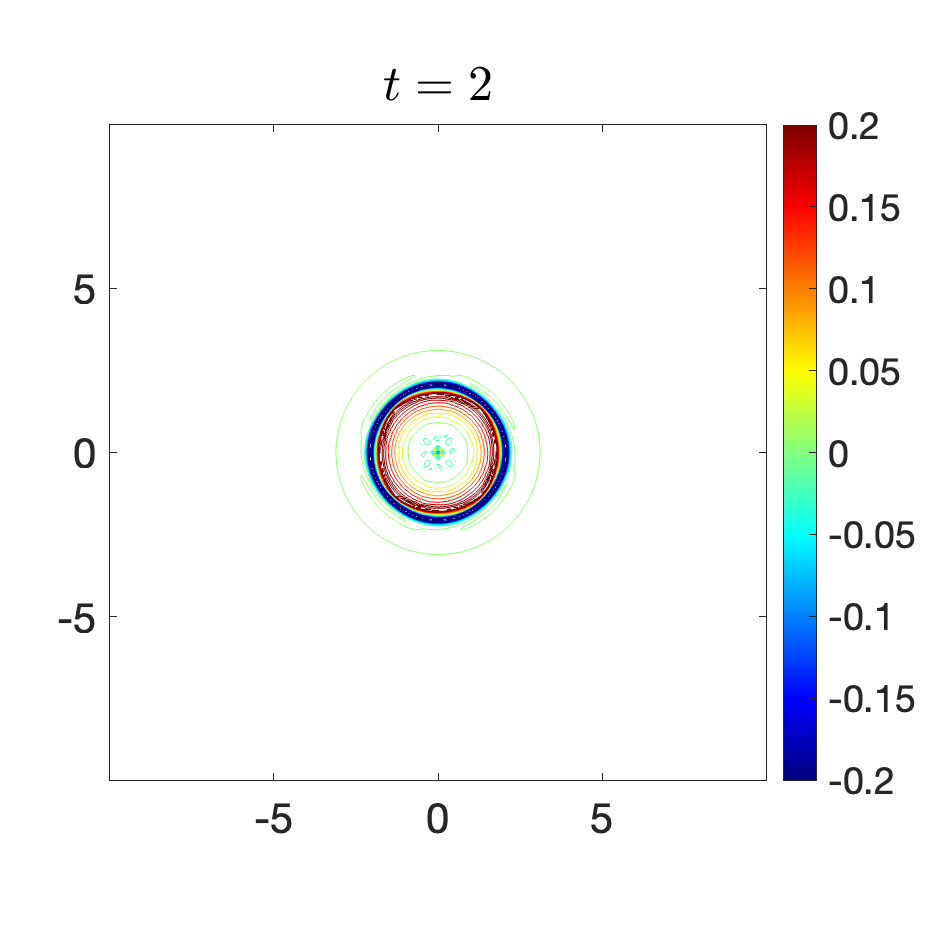}
            \includegraphics[trim=1.0cm 1.7cm 0.4cm 1.1cm, clip, width=4.45cm]{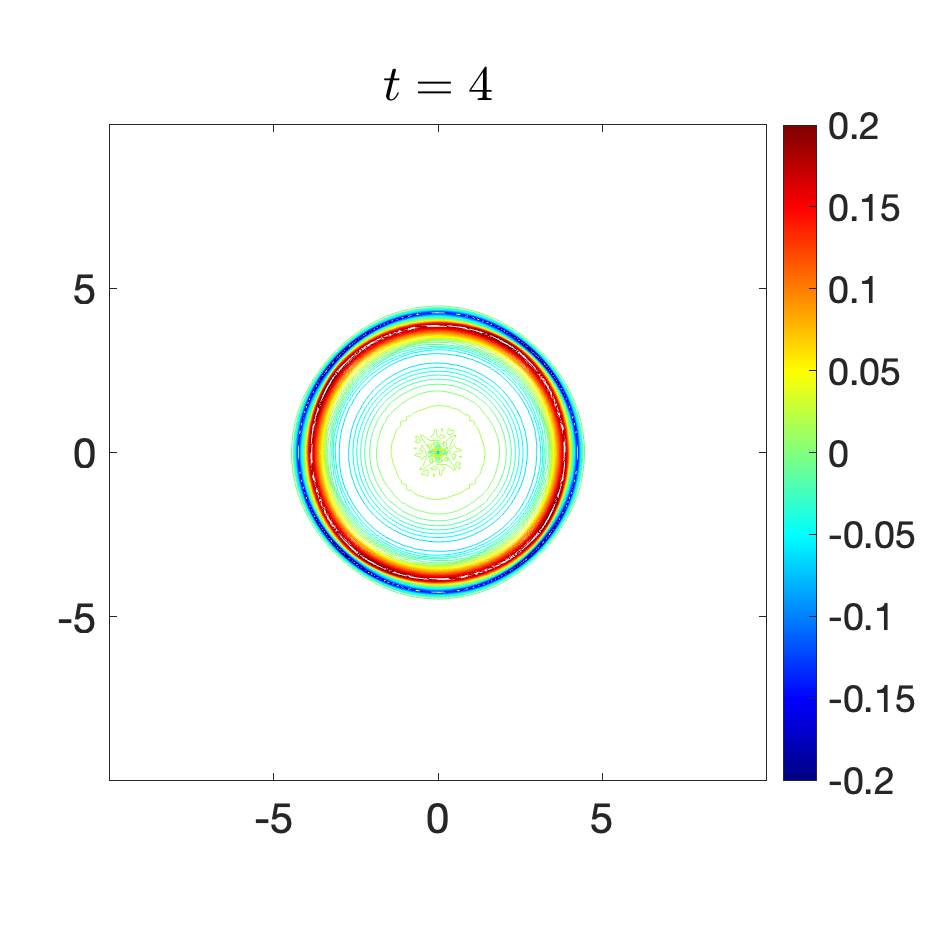}
            \includegraphics[trim=1.0cm 1.7cm 0.4cm 1.1cm, clip, width=4.45cm]{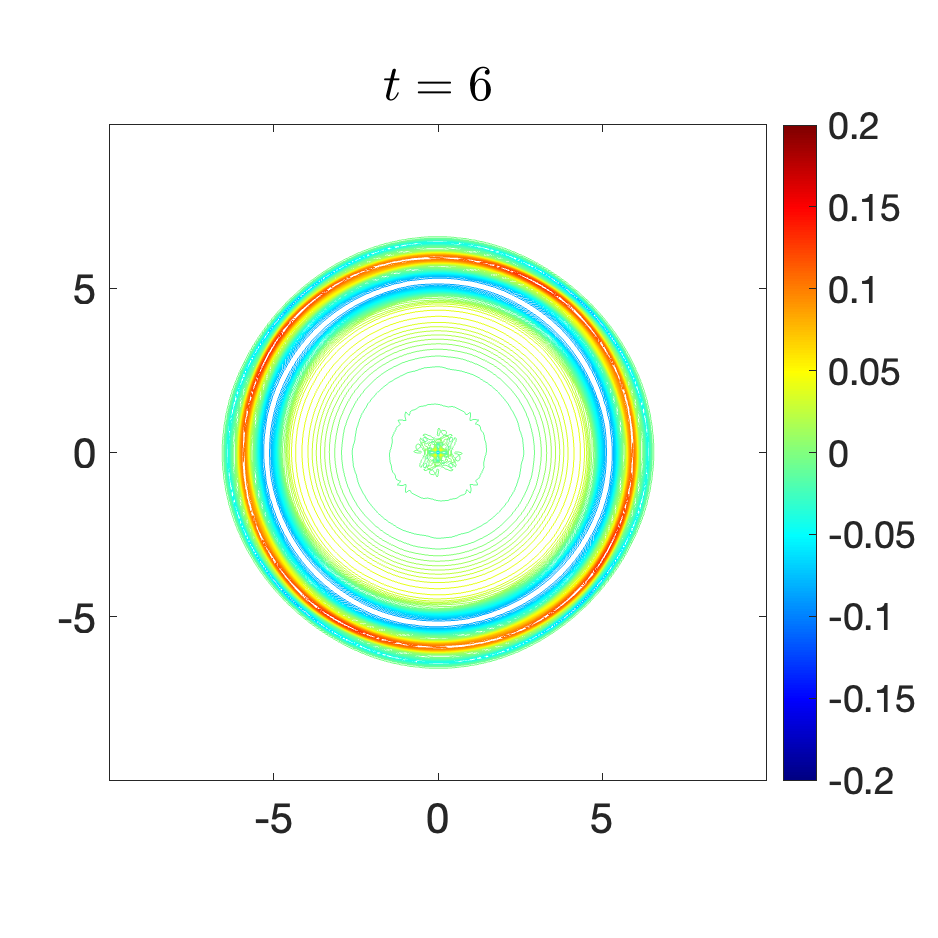}
            \includegraphics[trim=1.0cm 1.7cm 0.4cm 1.1cm, clip, width=4.45cm]{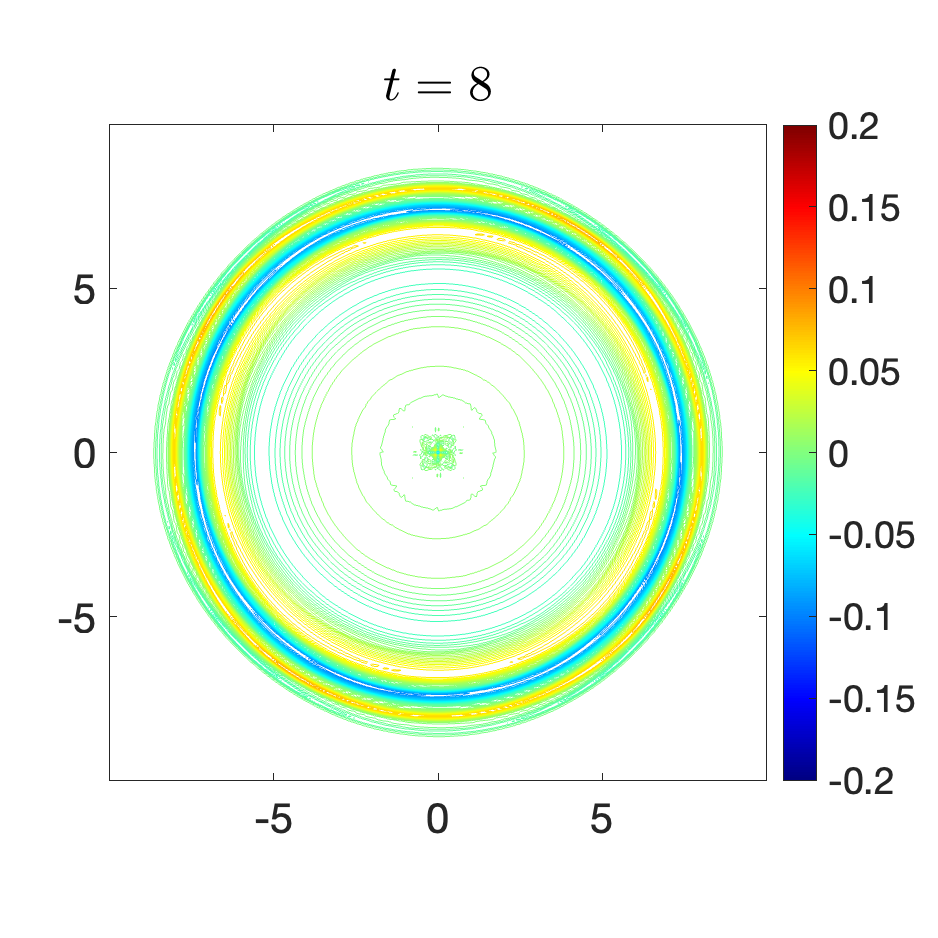}}
\caption{\sf Example 7: Time snapshots of the fluid level $h+Z$ (top row) and velocity divergence $u_x+v_y$ (bottom row): contour plots with
40 equally spaced contours each.\label{fig421}}
\end{figure}
\begin{figure}[ht!]
\centerline{\includegraphics[trim=1.0cm 1.7cm 0.6cm 1.1cm, clip, width=5.75cm]{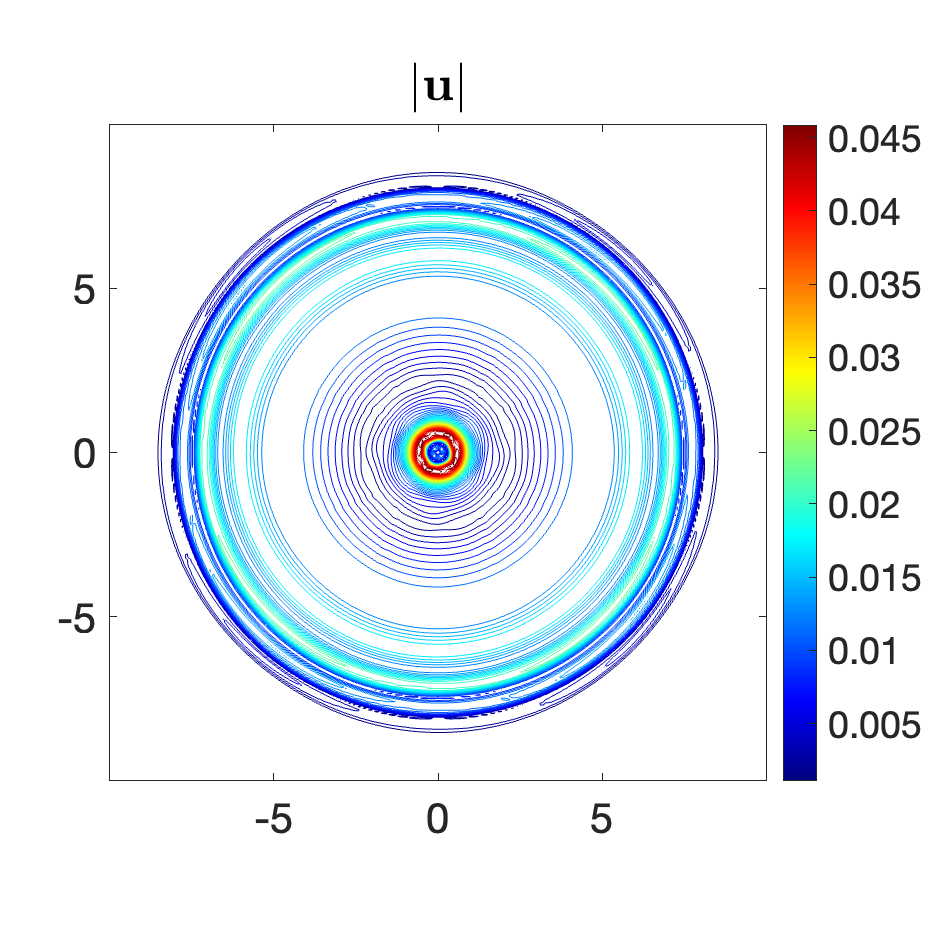}\hspace*{0.5cm}
            \includegraphics[trim=1.0cm 1.7cm 0.6cm 1.1cm, clip, width=5.75cm]{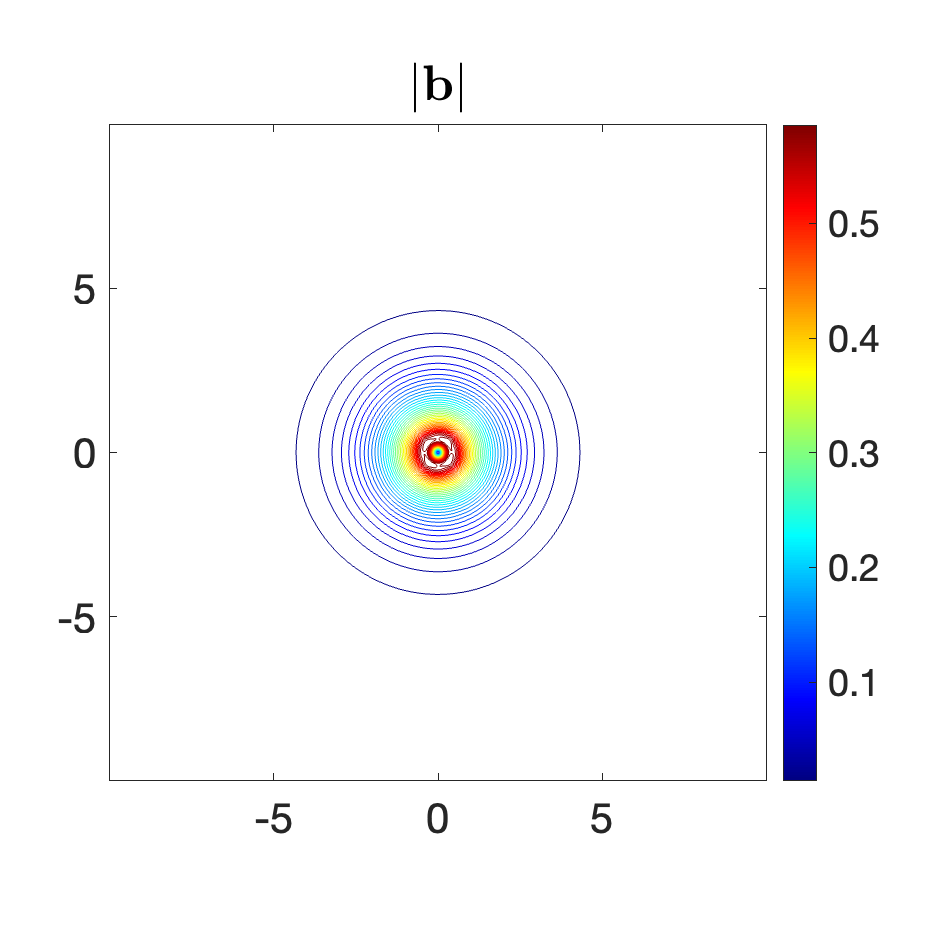}}
\caption{\sf Example 7: Contour plots of $|\bm u|$ and $|\bm b|$ at $t=8$ with 40 equally spaced contours each.\label{fig422}}
\end{figure}
\begin{figure}[ht!]
\centerline{\includegraphics[trim=0.4cm 0.8cm 1.1cm 0.2cm, clip, width=5.2cm]{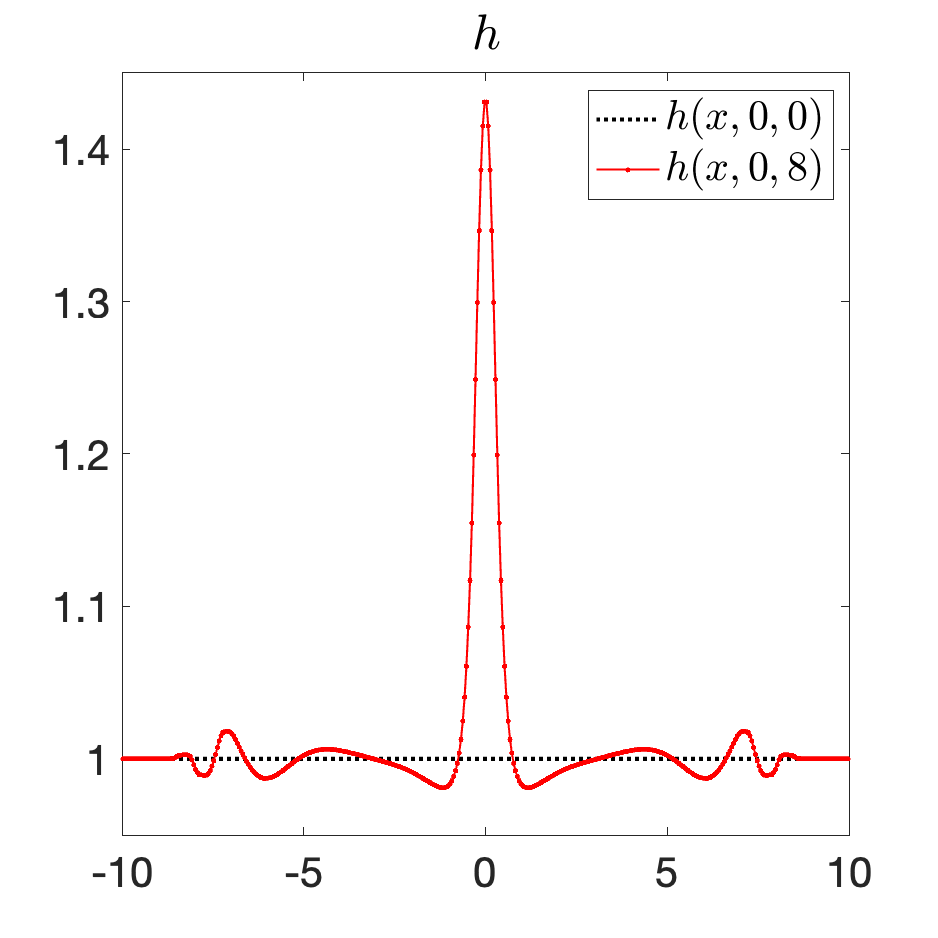}\hspace*{0.5cm}
	    \includegraphics[trim=0.4cm 0.8cm 1.1cm 0.2cm, clip, width=5.2cm]{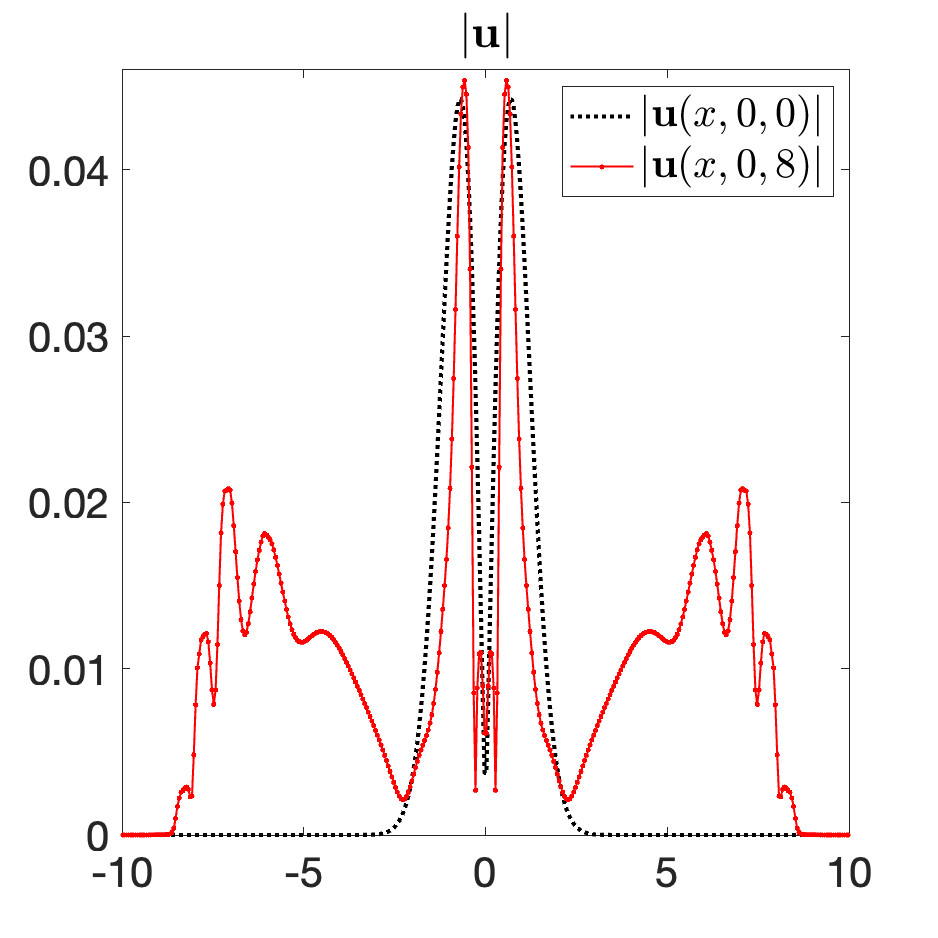}\hspace*{0.5cm}
            \includegraphics[trim=0.4cm 0.8cm 1.1cm 0.2cm, clip, width=5.2cm]{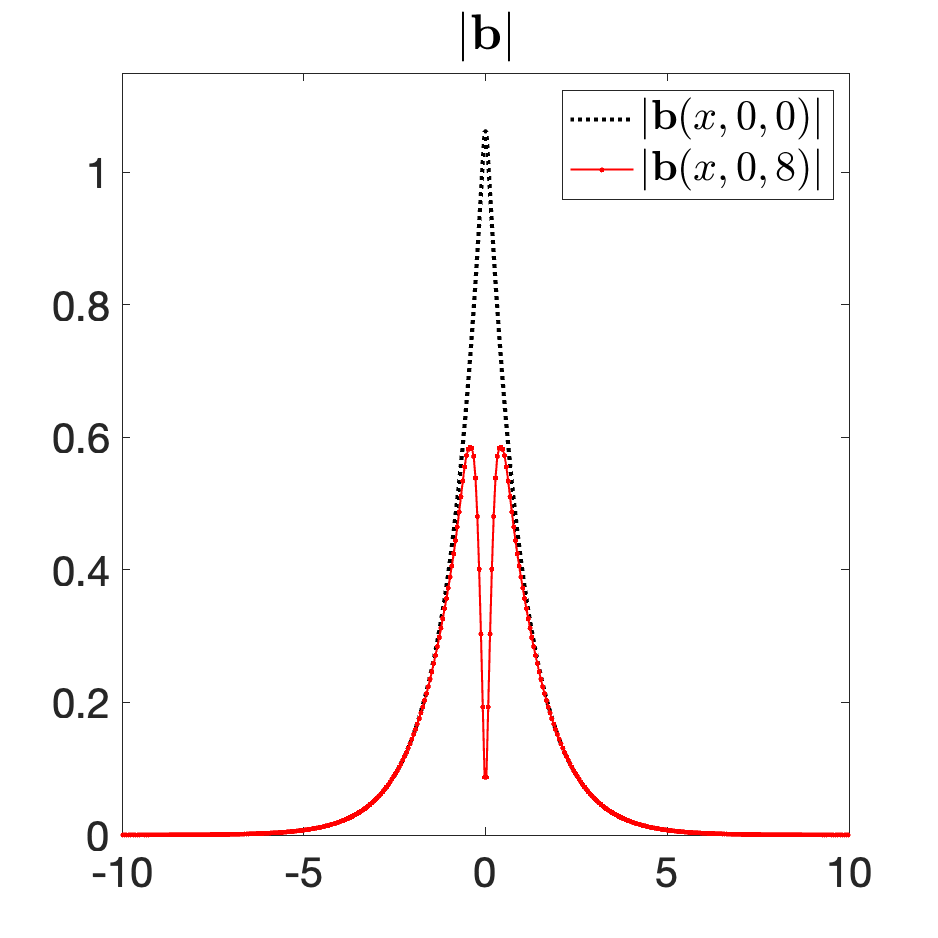}}
\caption{\sf Example 7: 1-D slices of $h$, $|\bm u|$, and $|\bm b|$ along $y=0$ at $t=0$ and 8.\label{fig418f}}
\end{figure}

\subsubsection*{Example 8---Geostrophic Adjustment with Constant Magnetic Field}
In the final example, we study how the standard process of (cyclo-) geostrophic adjustment (see \cite{zeitlin2018geophysical} and references
therein) is influenced by a constant magnetic field, which we chose to be oriented in the $x$-direction. We consider the following initial
conditions:
\begin{equation*}
h(x,y,0)=1+e^{-(x^2+y^2)},\quad(ha)(x,y,0)\equiv1,\quad u(x,y,0)=v(x,y,0)=b(x,y,0)\equiv0,
\end{equation*}
with the constant Coriolis parameter $f(y)\equiv1$ and flat bottom topography $Z(y)\equiv0$ on the computational domain
$[-10,10]\times[-10,10]$ subject to the outflow boundary conditions.

We use the WB scheme to compute the solution until the time $t=8$ on a $200\times200$ uniform mesh. We show the time snapshots of the fluid
depth $h$ and vorticity $\zeta$ at $t=2$, 4, 6, and 8 in Figure \ref{fig417}, in which we see (i) an elongated in the $x$-direction,
consistently with the imposed magnetic field, wave-packet of magneto-inertia-gravity waves; and (ii) two Alfv\'en wave packets that arise
according to the scenario of 1-D adjustment illustrated in Example 3. However, this scenario is modified by 2-D effects, producing vortices
linked to the wave packets and, consequently, traveling outward along the $x$-axis. It is worth noting that, as follows from
the comparison of the height and vorticity fields at the later stages (third and fourth columns of Figure \ref{fig417}), these vortices are
in approximate geostrophic equilibrium with negative vorticity corresponding to greater $h$. In order to confirm the origin of the wave
packets, we present 1-D slices of $v(x,0,t)$ and $b(x,0,t)$ at $t=1$, 2, 3, and 4 in Figure \ref{fig418}, where the characteristic
Alfv\'en-wave signature in these fields can be clearly seen.
\begin{figure}[ht!]
\centerline{\includegraphics[trim=1.0cm 1.7cm 0.6cm 1.1cm, clip, width=4.4cm]{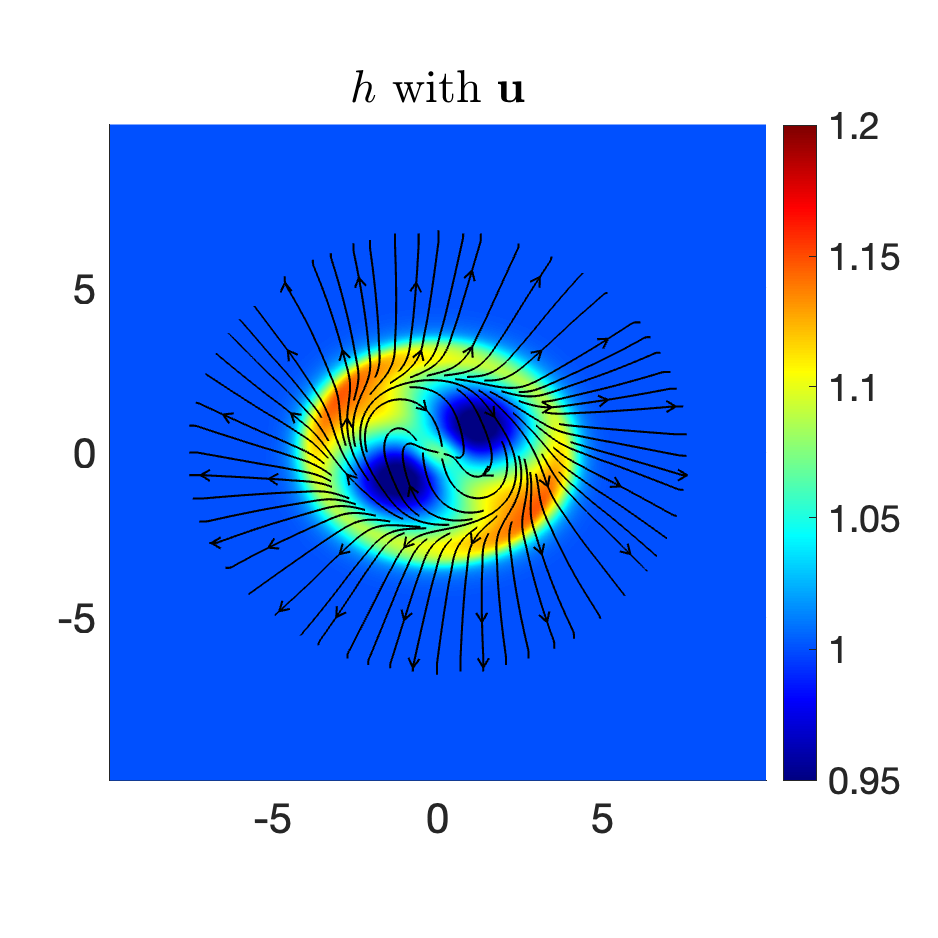}
            \includegraphics[trim=1.0cm 1.7cm 0.6cm 1.1cm, clip, width=4.4cm]{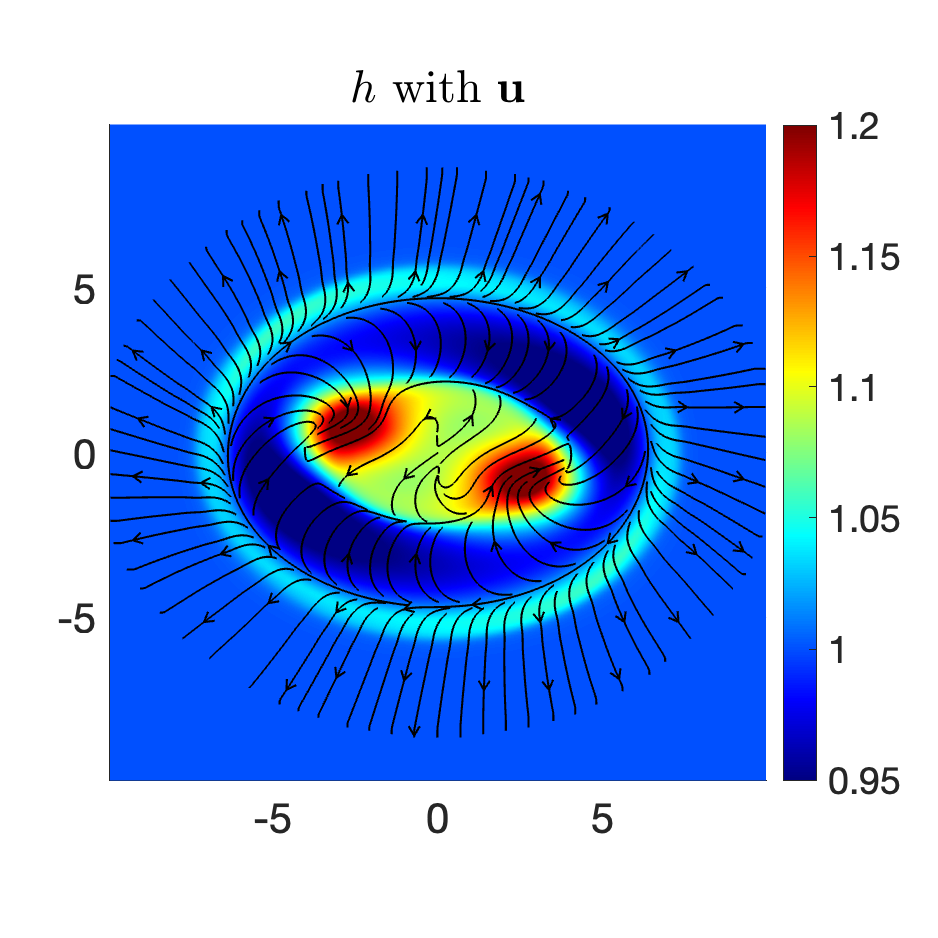}
            \includegraphics[trim=1.0cm 1.7cm 0.6cm 1.1cm, clip, width=4.4cm]{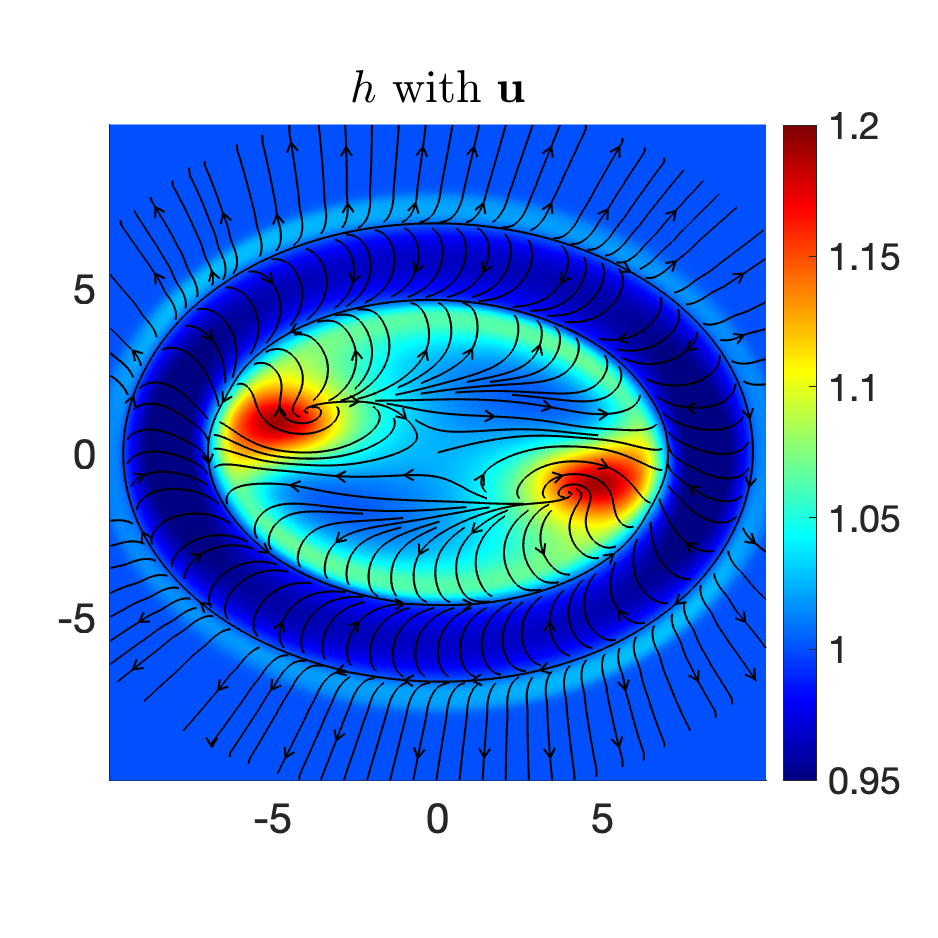}
            \includegraphics[trim=1.0cm 1.7cm 0.6cm 1.1cm, clip, width=4.4cm]{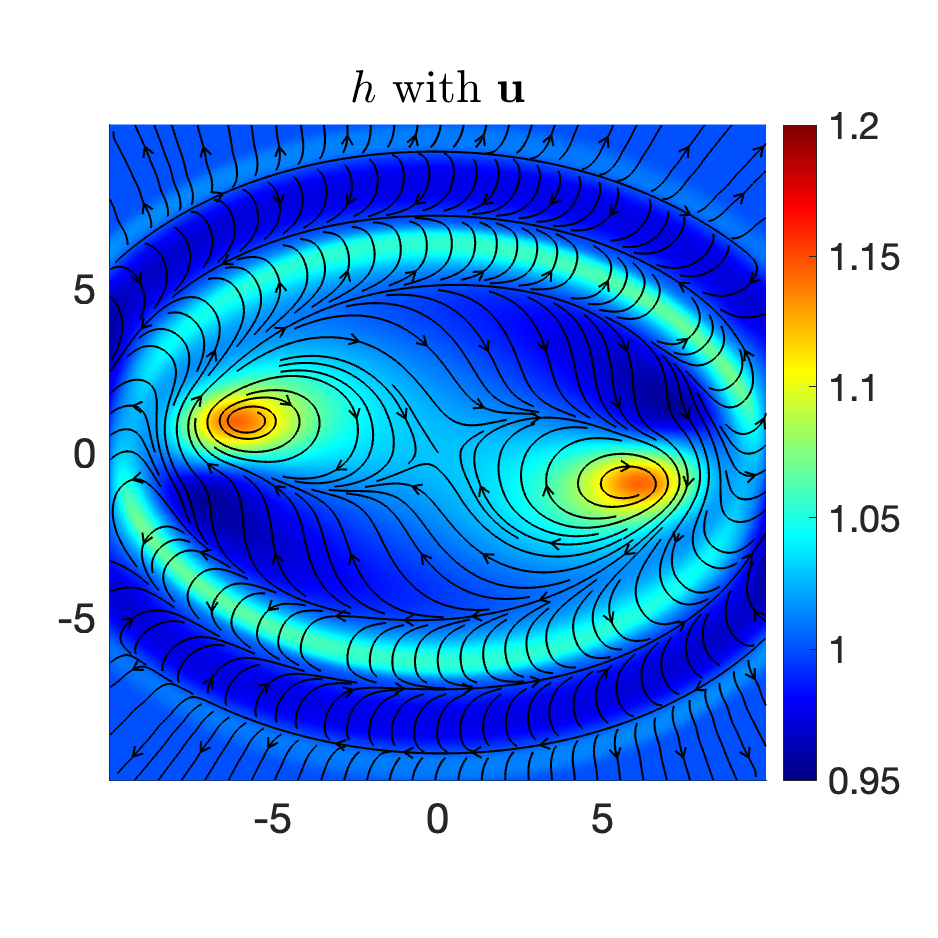}}
\vskip7pt
\centerline{\includegraphics[trim=1.0cm 1.7cm 0.6cm 1.1cm, clip, width=4.4cm]{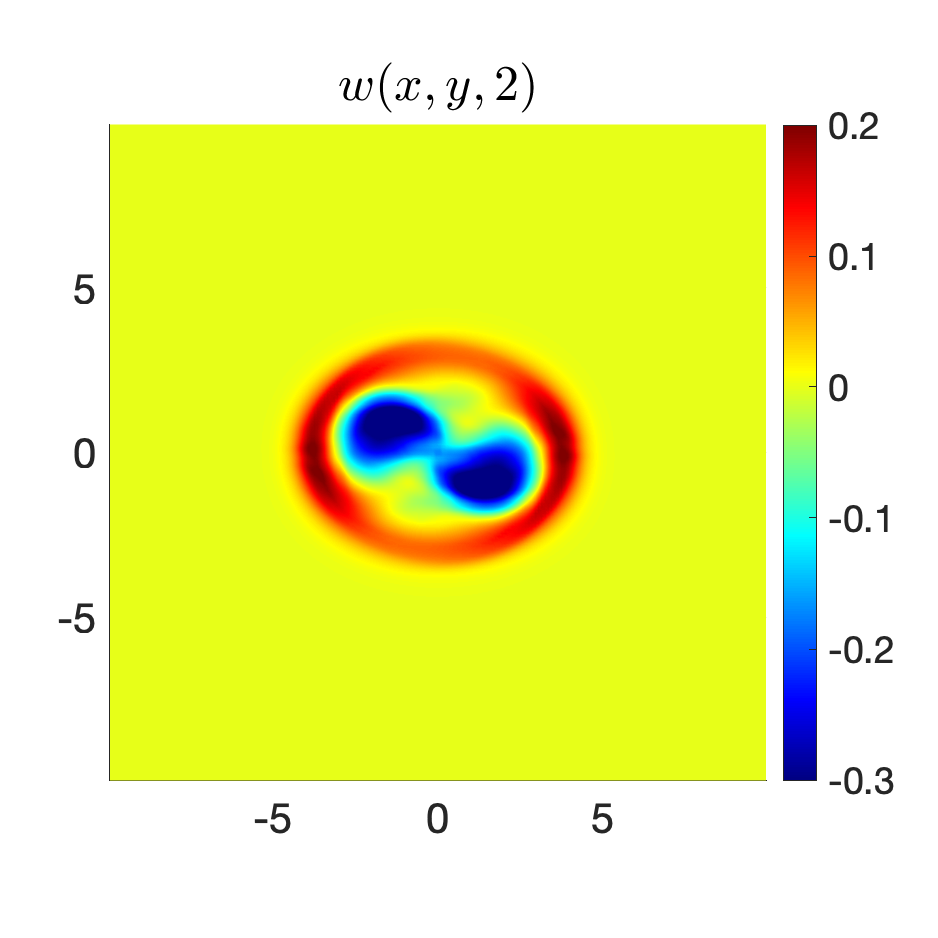}
            \includegraphics[trim=1.0cm 1.7cm 0.6cm 1.1cm, clip, width=4.4cm]{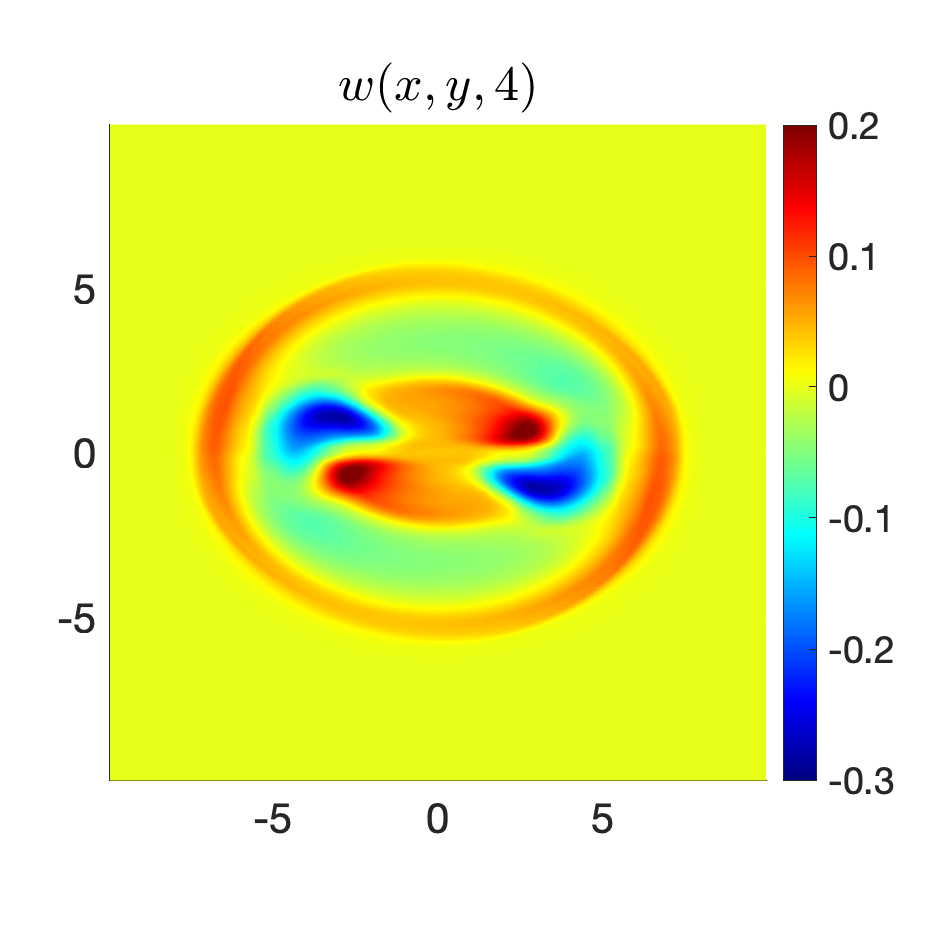}
            \includegraphics[trim=1.0cm 1.7cm 0.6cm 1.1cm, clip, width=4.4cm]{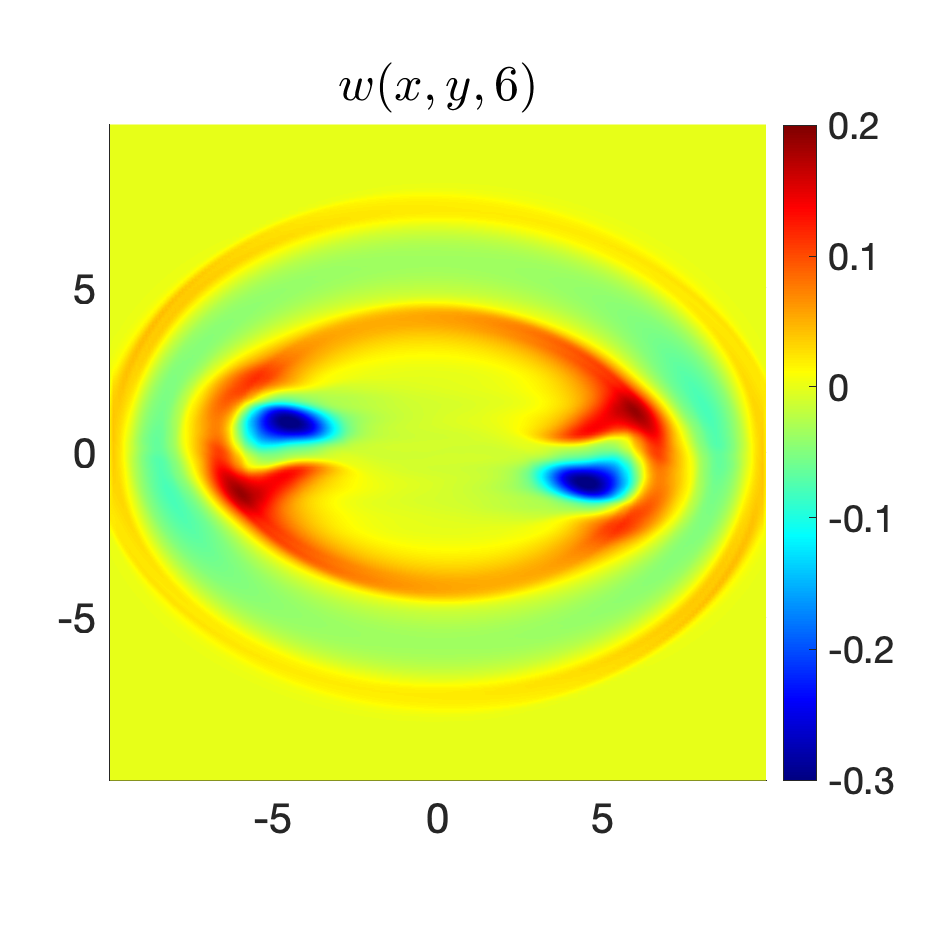}
            \includegraphics[trim=1.0cm 1.7cm 0.6cm 1.1cm, clip, width=4.4cm]{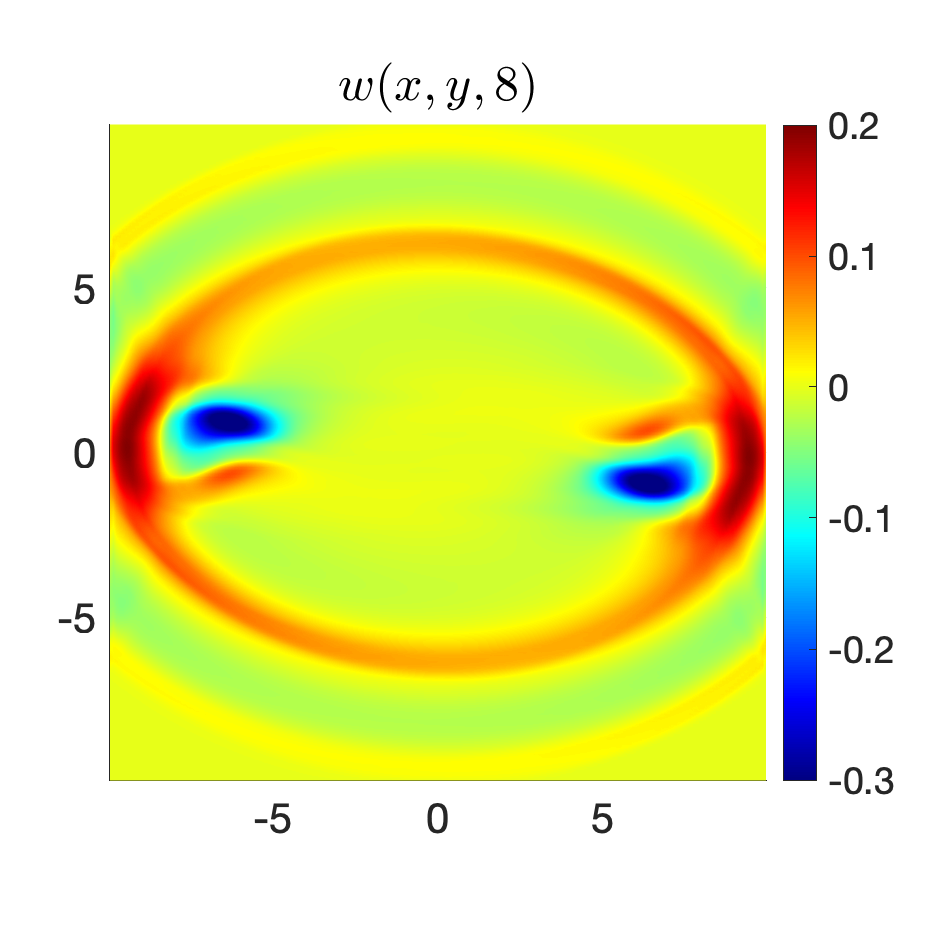}}
\caption{\sf Example 8: Time snapshots of the fluid depth $h$ with velocity streamlines (top row) and vorticity $\zeta$ (bottom row).
\label{fig417}}
\end{figure}
\begin{figure}[ht!]
\centerline{\includegraphics[trim=0.6cm 1.7cm 2.2cm 0.5cm, clip, width=4.3cm]{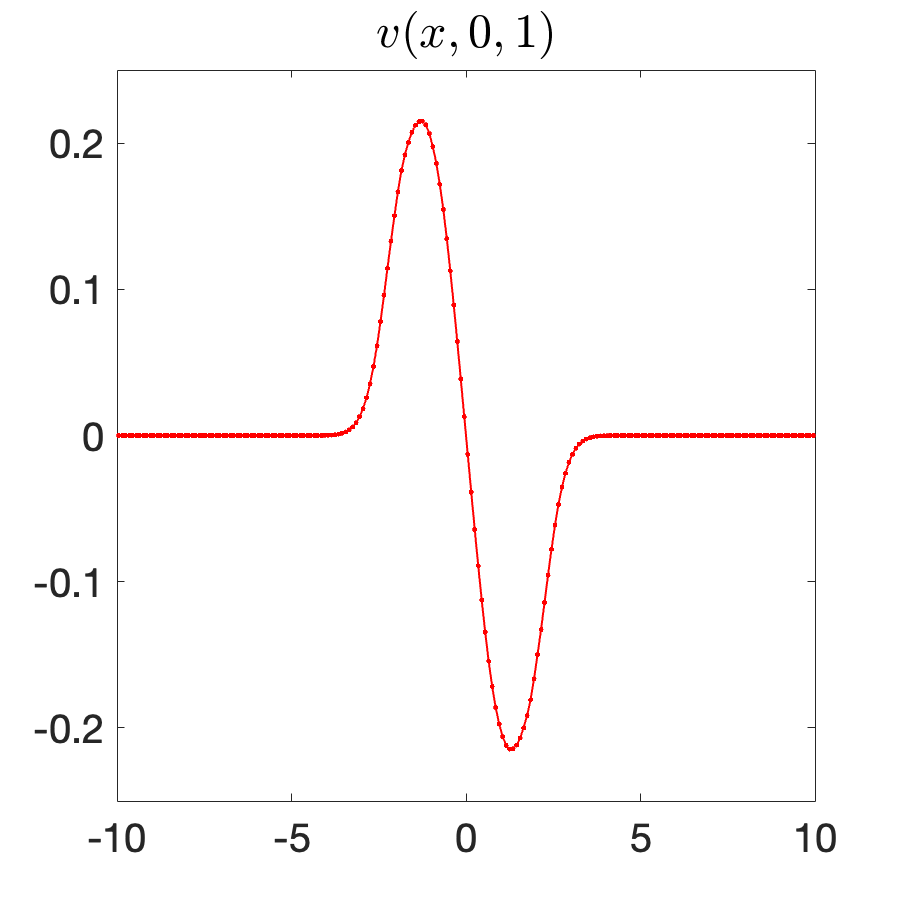}
            \includegraphics[trim=0.6cm 1.7cm 2.2cm 0.5cm, clip, width=4.3cm]{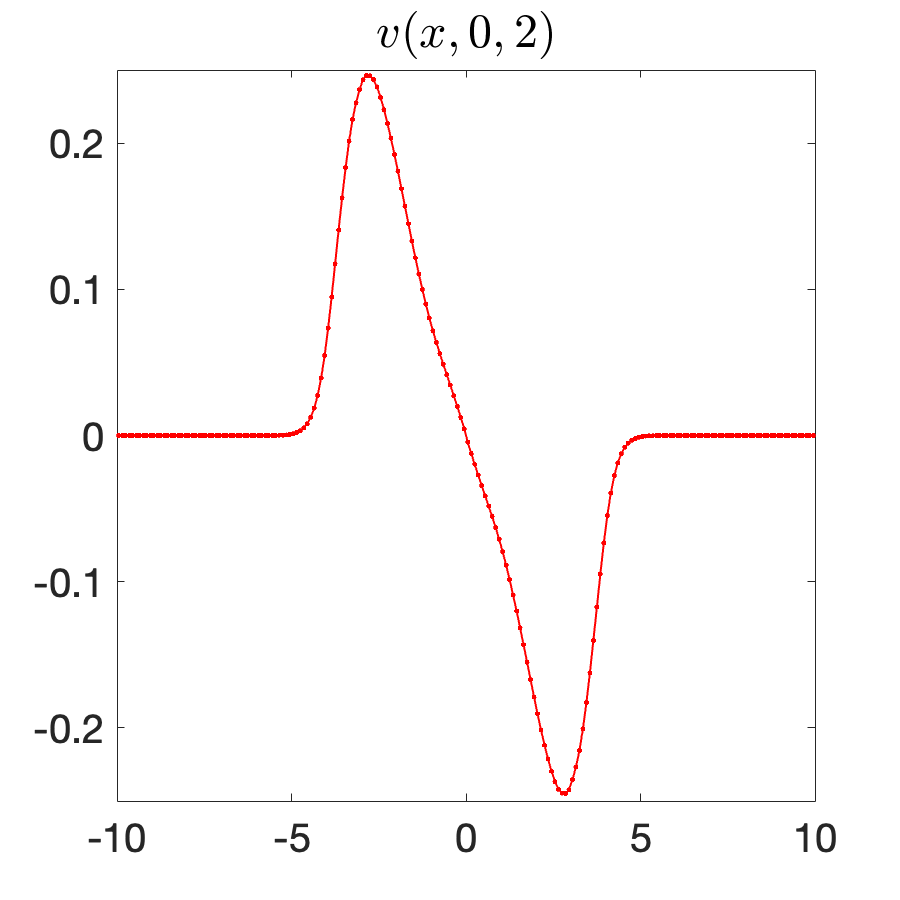}
            \includegraphics[trim=0.6cm 1.7cm 2.2cm 0.5cm, clip, width=4.3cm]{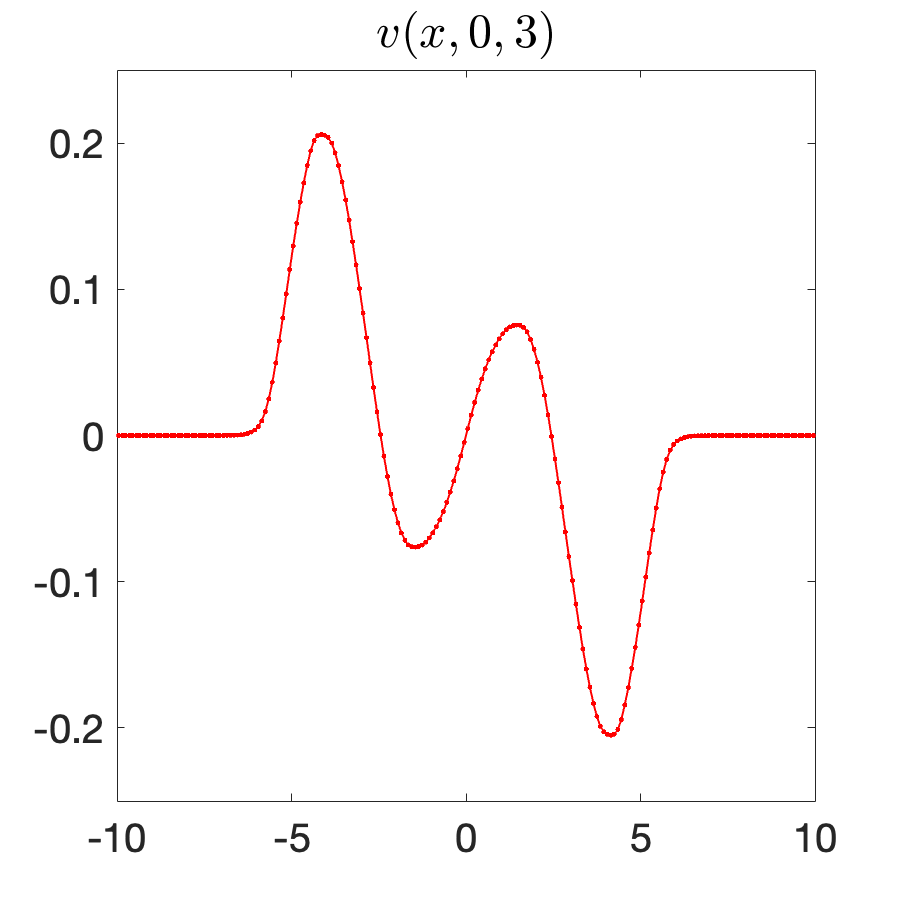}
            \includegraphics[trim=0.6cm 1.7cm 2.2cm 0.5cm, clip, width=4.3cm]{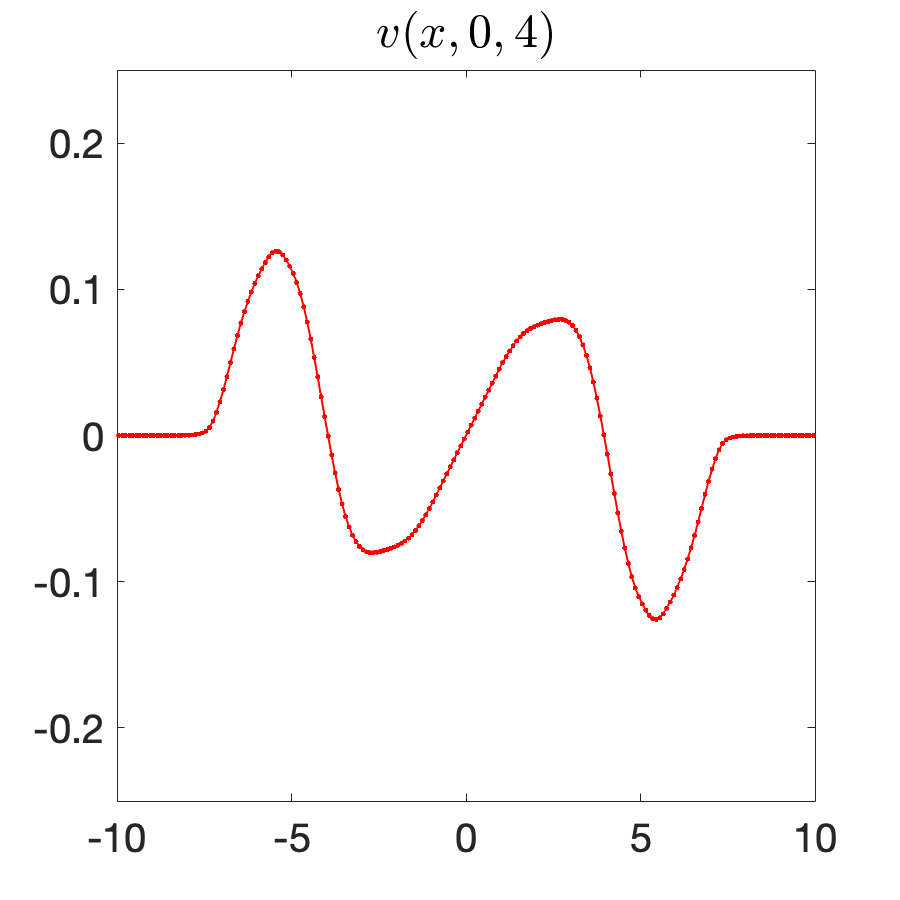}}
\vskip7pt
\centerline{\includegraphics[trim=0.6cm 1.7cm 2.2cm 0.5cm, clip, width=4.3cm]{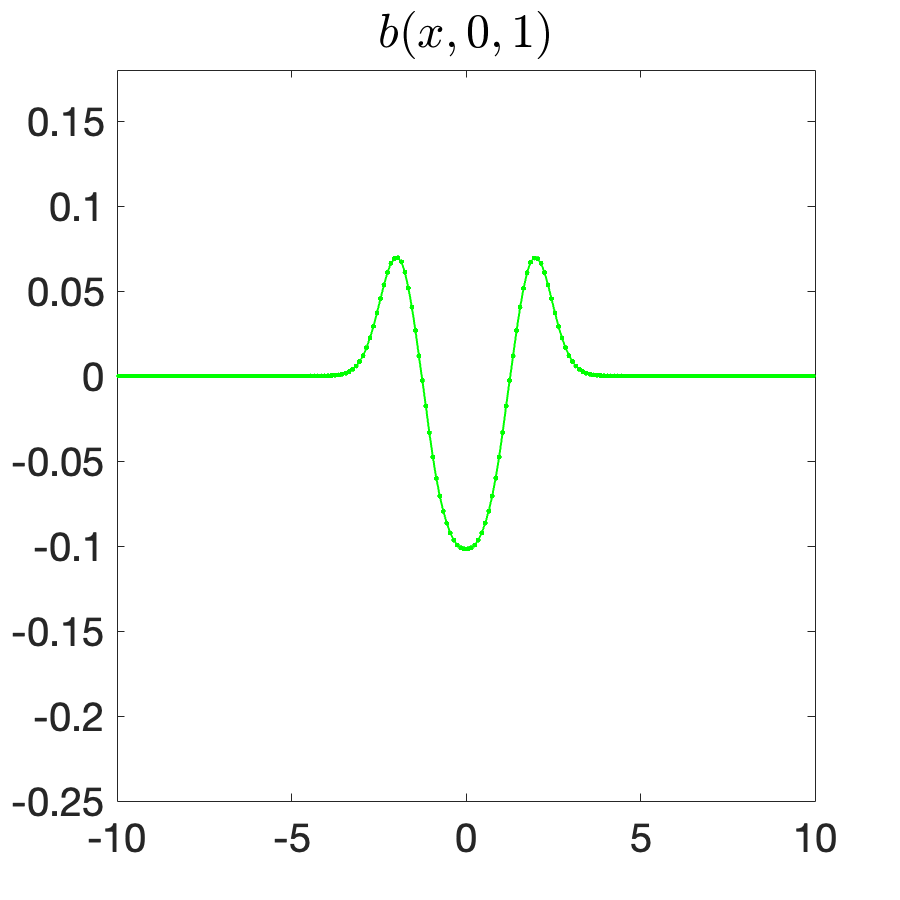}
            \includegraphics[trim=0.6cm 1.7cm 2.2cm 0.5cm, clip, width=4.3cm]{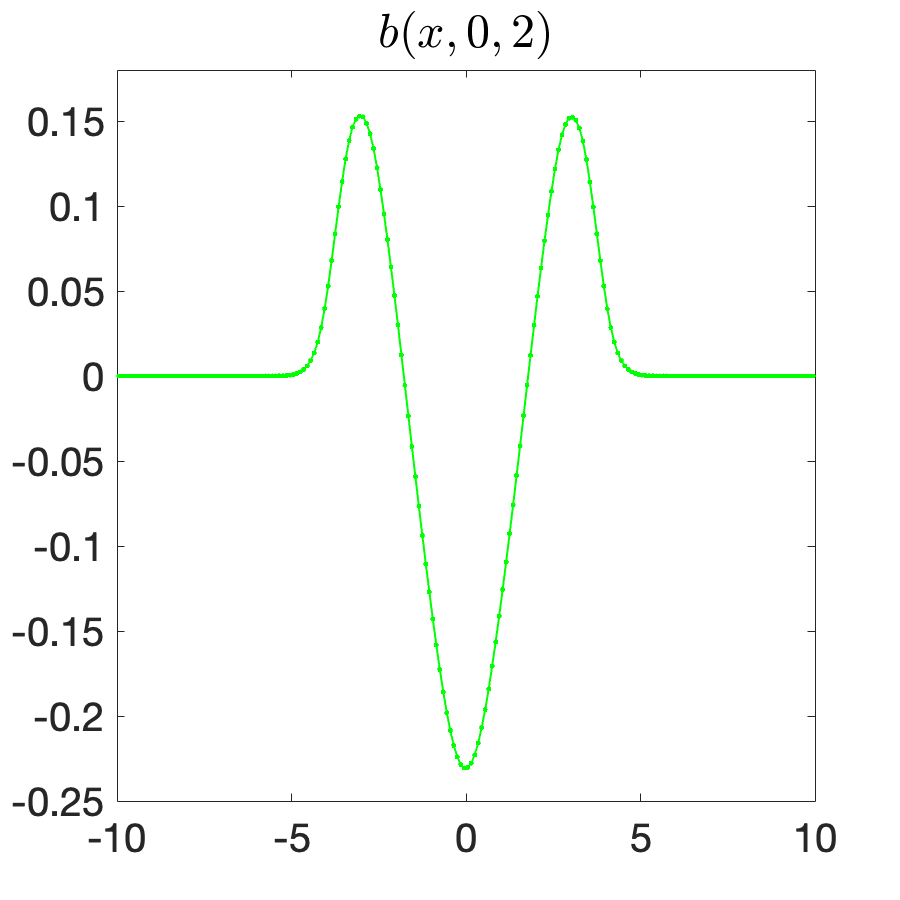}
            \includegraphics[trim=0.6cm 1.7cm 2.2cm 0.5cm, clip, width=4.3cm]{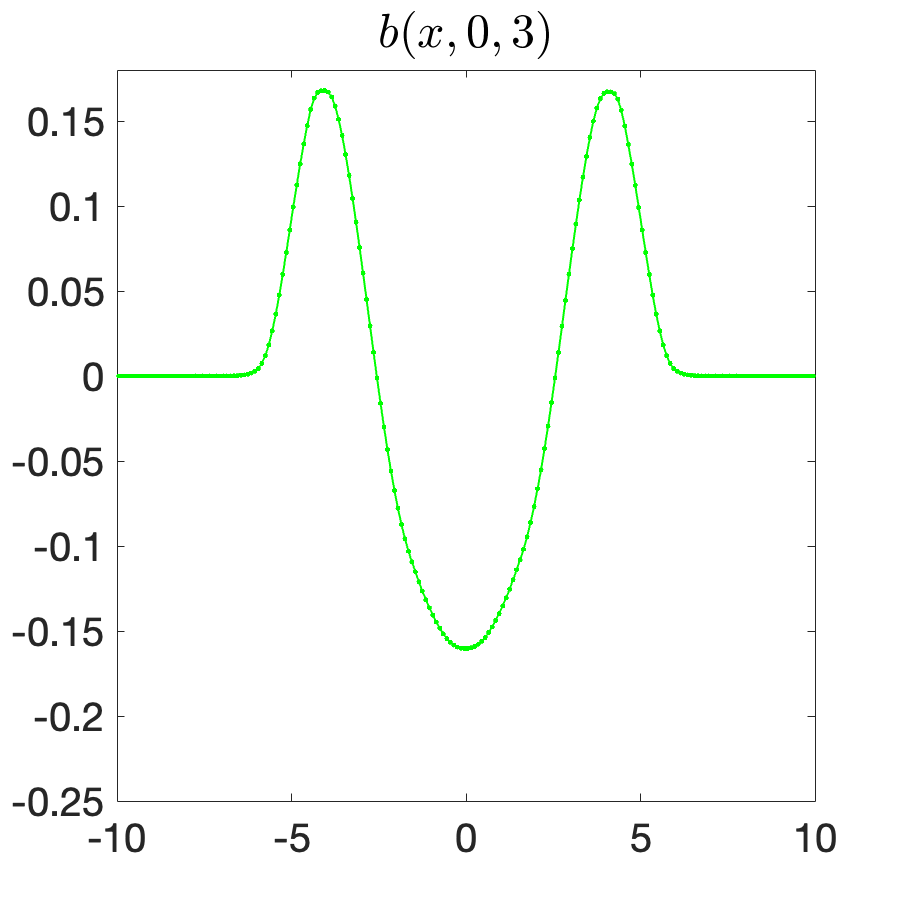}
            \includegraphics[trim=0.6cm 1.7cm 2.2cm 0.5cm, clip, width=4.3cm]{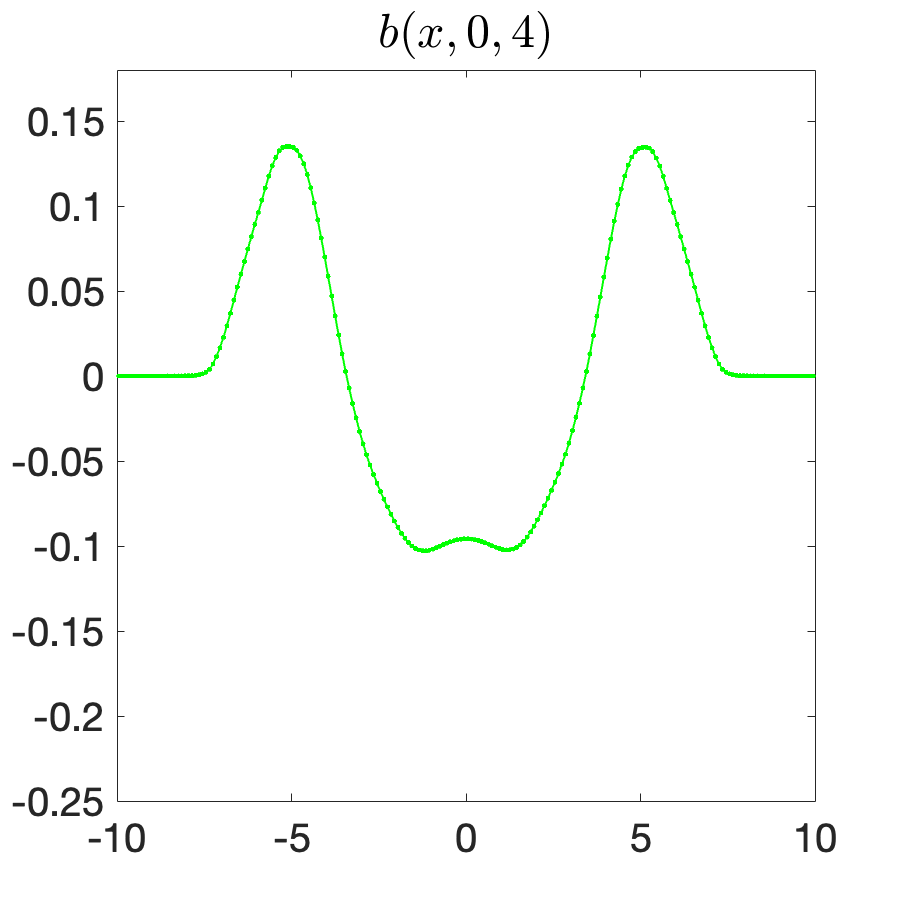}}
\caption{\sf Example 8: Time snapshots of $v(x,0,t)$ (top row) and $b(x,0,t)$ (bottom row) at times $t=1$ (first column), 2 (second column),
3 (third column), and 4 (fourth column).\label{fig418}}
\end{figure}

\section{Conclusion}\label{sec6}
In this paper, we have developed a novel second-order flux globalization based path-conservative central-upwind (PCCU) scheme for rotating
shallow water magnetohydrodynamics equations. Our primary objectives in designing this scheme were twofold: firstly, to maintain the
divergence-free constraint of the magnetic field at the discrete level, and secondly, to uphold the well-balanced (WB) property needed to
preserve certain physically steady states of the underlying system precisely.

In order to enforce the local divergence-free constraint of the magnetic field, we have considered a Godunov-Powell modified version of the
studied system, introduced additional equations through spatial differentiation of the magnetic field equations, and adjusted the
reconstruction procedures for magnetic field variables. In order to guarantee the WB property, we have employed a flux globalization
technique within the PCCU scheme, enabling the method to maintain both still- and moving-water equilibria.

We have conducted a series of numerical experiments that illustrate the performance of the proposed method. The obtained numerical results
demonstrate the scheme's robustness and showcase its ability to provide high-resolution solutions without the development of spurious
oscillations. The presented numerical examples indicate that the scheme accurately captures the details of the fundamental process of
magneto-(cyclo-)geostrophic adjustment, resolving equally well the slow motions, shocks, vortices, long Alfv\'en waves, and the fast wave
motions of both kinds---Alfv\'en and magneto-inertia-gravity---which are present in the system. Thus well-tested, the proposed scheme is
ready for in-depth high-resolution numerical studies of the fundamental dynamical processes in the MRSW model, which are important for
astro- and geophysical applications. Among them, the influence of the magnetic field on vortex dynamics, which can be highly nontrivial, as
shown in recent studies \cite{Lahaye2022Coherent,Magill2019vortexadjustment} (both were not using dedicated MHD numerical schemes),
vortex-Alfv\'en wave interactions, MHD turbulence, which can be studied in the MRSW model along the lines of similar investigations
conducted in the RSW model (see \cite{Lahaye2012turbulence}), and so on.

\begin{acknowledgment}
The work of A. Chertock was partially supported by NSF grant DMS-2208438. The work of A. Kurganov was supported in part by NSFC grant
12171226 and by the fund of the Guangdong Provincial Key Laboratory of Computational Science and Material Design (No. 2019B030301001). The
work of M. Redle was supported in part by NSF grant DMS-2208438 and by the fund of the DFG Research Unit FOR5409 (grant No. 463312734).
\end{acknowledgment}

\appendix

\section{Generalized Minmod Piecewise Linear Reconstruction}\label{appxA}
In this appendix, we provide a brief description of second-order generalized minmod piecewise linear reconstructions (see, e.g.,
\cite{Lie2003artificial,Nessyahu1990Non,Sweby1984High}) in both the 1-D and 2-D cases.

In the 1-D case, we consider a function $\psi(y)$, whose values $\psi_k$ (either the cell averages or point values) at $y=y_k$ are
available, and approximate the slopes $(\psi_y)_k$ using a generalized minmod limiter to obtain
\begin{equation}
(\psi_y)_k={\rm minmod}\left(\Theta\,\frac{\psi_{k+1}-\psi_k}{\dy},\,\frac{\psi_{k+1}-\psi_{k-1}}{2\dy},\,
\Theta\,\frac{\psi_k-\psi_{k-1}}{\dy}\right),\quad\Theta\in[1,2].
\label{2.26}
\end{equation}
Here, the minmod function is defined by
\begin{equation*}
\mbox{minmod}(c_1,c_2,\ldots)=\left\{\begin{aligned}
&\min(c_1,c_2,\ldots)&&\mbox{if}~c_i>0,~\forall i,\\
&\max(c_1,c_2,\ldots)&&\mbox{if}~c_i<0,~\forall i,\\
&0&&\mbox{otherwise},
\end{aligned}
\right.
\end{equation*}
and the parameter $\Theta$ is to be chosen to adjust the amount of numerical dissipation present in the numerical scheme with larger values
of $\Theta$ leading to sharper but, in general, more oscillatory reconstructions.

We use the slopes computed in \eref{2.26} to obtain the following second-order non-oscillatory piecewise linear reconstruction of $\psi$:
\begin{equation}
\widetilde\psi(y)=\psi_k+(\psi_y)_k(y-y_k),\quad y\in C_k.
\label{2.24}
\end{equation}
This reconstruction is generically discontinuous at the cell interfaces $y=y_\kph$ and hence the one-sided point values of $\psi$ at those
points are
\begin{equation}
\psi^-_\kph:=\,\psi_k+\frac{\dy}{2}(\psi_y)_k\quad\mbox{and}\quad\psi^+_\kph:=\,\psi_{k+1}-\frac{\dy}{2}(\psi_y)_{k+1}.
\label{2.25}
\end{equation}

When a 2-D function $\psi(x,y)$ is considered, we denote by $\psi_{j,k}$ its discrete values and use the generalized minmod limiter to
approximate the $x$- and $y$-slopes:
\begin{equation}
\begin{aligned}
(\psi_x)_{j,k}={\rm minmod}\left(\Theta\,\frac{\psi_{j,k}-\psi_{j-1,k}}{\dx},\,\frac{\psi_{j+1,k}-\psi_{j-1,k}}{2\dx},\,
\Theta\,\frac{\psi_{j+1,k}-\psi_{j,k}}{\dx}\right),\\
(\psi_y)_{j,k}={\rm minmod}\left(\Theta\,\frac{\psi_{j,k}-\psi_{j,k-1}}{\dy},\,\frac{\psi_{j,k+1}-\psi_{j,k-1}}{2\dy},\,
\Theta\,\frac{\psi_{j,k+1}-\psi_{j,k}}{\dy}\right),
\end{aligned}
\quad\Theta\in[1,2].
\label{3.24}
\end{equation}
The resulting piecewise linear interpolant then reads as
\begin{equation*}
\widetilde\psi(x,y)\,=\psi_{j,k}+{(\psi_x)}_{j,k}(x-x_j)+{(\psi_y)}_{j,k}(y-y_k),\quad (x,y)\in C_{j,k},
\end{equation*}
and the corresponding point values of $\psi$ inside each cell $C_{j,k}$ are then given by
$$
\begin{aligned}
&\psi_{j,k}^{\rm E}:=\psi_{j,k}+\frac{\dx}{2}(\psi_x)_{j,k},\quad\psi_{j,k}^{\rm W}:=\psi_{j,k}-\frac{\dx}{2}(\psi_x)_{j,k},\\
&\psi_{j,k}^{\rm N}:=\psi_{j,k}+\frac{\dy}{2}(\psi_y)_{j,k},\quad\psi_{j,k}^{\rm S}:=\psi_{j,k}-\frac{\dy}{2}(\psi_y)_{j,k}.
\end{aligned}
$$

\section{1-D Fifth-Order WENO-Z Interpolant}\label{appB}
In this appendix, we briefly describe the fifth-order WENO-Z interpolant introduced in \cite{BCCD,CCD,DB}.

We consider a function $\psi(y)$, whose point values $\psi_k$ at $y=y_k$ are available and explain how to calculate an interpolated
left-sided values of $\psi$ at $y=y_\kph$, denoted by $\psi^-_\kph$. The right-sided value $\psi^+_\kph$ can then be obtained in a
mirror-symmetric way.

We construct the three parabolic interpolants ${\cal P}_{k,0}(y)$, ${\cal P}_{k,1}(y)$, and ${\cal P}_{k,2}(y)$ on the stencils
$[y_{k-2},y_{k-1},y_k]$, $[y_{k-1},y_k,y_{k+1}]$, and $[y_k,y_{k+1},y_{k+2}]$, respectively, and compute $\psi^-_\kph$ as their weighted
average:
\begin{equation*}
\psi^-_\kph=\sum_{\ell=0}^2\omega_{k,\ell}{\cal P}_{k,\ell}(y_\kph),
\end{equation*}
where
\begin{equation*}
\begin{aligned}
&{{\cal P}}_{k,0}(y_\kph)=\frac{3}{8}\,\psi_{k-2}-\frac{5}{4}\,\psi_{k-1}+\frac{15}{8}\,\psi_k,\\
&{{\cal P}}_{k,1}(y_\kph)=-\frac{1}{8}\,\psi_{k-1}+\frac{3}{4}\,\psi_k+\frac{3}{8}\,\psi_{k+1},\\
&{{\cal P}}_{k,2}(y_\kph)=\frac{3}{8}\,\psi_k+\frac{3}{4}\,\psi_{k+1}-\frac{1}{8}\,\psi_{k+2},
\end{aligned}
\end{equation*}
and the weights $\omega_{k,\ell}$ are computed by
\begin{equation*}
\omega_{k,\ell}=\frac{\alpha_{k,\ell}}{\alpha_{k,0}+\alpha_{k,1}+\alpha_{k,2}},\quad
\alpha_{k,\ell}=d_\ell\left[1+\bigg(\frac{\tau_{k,5}}{\beta_{k,\ell}+\varepsilon}\bigg)^r\,\right],\quad\ell=0,1,2.
\end{equation*}
Here, $d_0=\frac{1}{16}$, $d_1=\frac{5}{8}$, $d_2=\frac{5}{16}$, the smoothness indicators $\beta_{k,\ell}$ for the corresponding parabolic
interpolants ${\cal P}_{k,\ell}(y)$ are given by
\begin{equation*}
\begin{aligned}
&\beta_{k,0}=\frac{13}{12}\big(\psi_{k-2}-2\psi_{k-1}+\psi_k\big)^2+\frac{1}{4}\big(\psi_{k-2}-4\psi_{k-1}+3\psi_k\big)^2,\\
&\beta_{k,1}=\frac{13}{12}\big(\psi_{k-1}-2\psi_k+\psi_{k+1}\big)^2+\frac{1}{4}\big(\psi_{k-1}-\psi_{k+1}\big)^2,\\
&\beta_{k,2}=\frac{13}{12}\big(\psi_k-2\psi_{k+1}+\psi_{k+2}\big)^2+\frac{1}{4}\big(3\psi_k-4\psi_{k+1}+\psi_{k+2}\big)^2,
\end{aligned}
\end{equation*}
$\tau_{k,5}=\big|\beta_{k,2}-\beta_{k,0}\big|$. We have chosen $r=2$ and $\varepsilon=10^{-12}$ in all of the numerical examples.

\bibliographystyle{siam}
\bibliography{biblio}
\end{document}